\newcommand{\myauthor}{Benjamin Antieau, Tobias Barthel, and David Gepner}
\newcommand{\mytitle}{On localization sequences in the algebraic $K$-theory of ring spectra}
\newcommand{\pdftitle}{\mytitle}
\author{Benjamin Antieau\footnote{Benjamin Antieau was supported by NSF Grant DMS-1461847.},
Tobias Barthel, and David Gepner\footnote{David Gepner was supported by NSF Grant
DMS-1406529.}}
\title{On localization sequences in the algebraic $K$-theory of ring spectra}
\definecolor{todo}{rgb}{1,0,0}
\definecolor{conditional}{rgb}{0,1,0}
\definecolor{e-mail}{rgb}{0,.40,.80}
\definecolor{reference}{rgb}{.20,.60,.22}
\definecolor{mrnumber}{rgb}{.80,.40,0}
\definecolor{citation}{rgb}{0,.40,.80}
\DeclareMathOperator{\Ho}{Ho}
\newcommand{\perf}{\mathrm{perf}}
\DeclareMathOperator{\cofib}{cofib}
\DeclareMathOperator{\id}{id}
\DeclareMathOperator{\op}{op}
\DeclareMathOperator{\im}{im}
\DeclareMathOperator{\eq}{eq}
\DeclareMathOperator*{\colim}{colim}
\newcommand{\rwe}{\tilde{\rightarrow}}
\newcommand{\we}{\simeq}
\newcommand{\iso}{\cong}
\newcommand{\KU}{\mathrm{KU}}
\newcommand{\ku}{\mathrm{ku}}
\renewcommand{\epsilon}{\varepsilon}
\newcommand{\Tr}{\mathrm{Tr}}
\newcommand{\ex}{\mathrm{ex}}
\newcommand{\qc}{\mathrm{qc}}
\newcommand{\Bcyc}{\mathrm{B}^{\mathrm{cyc}}}
\DeclareMathOperator{\THH}{THH}
\DeclareMathOperator{\HH}{HH}
\DeclareMathOperator{\K}{\mathrm{K}}
\newcommand{\Kloc}{\mathds{K}}
\newcommand{\BP}[1]{\mathrm{BP}{\langle #1 \rangle}}
\newcommand{\A}[1]{\mathrm{A}{\langle #1 \rangle}}
\newcommand{\BPP}{\mathrm{BP}}
\newcommand{\MU}{\mathrm{MU}}
\newcommand{\Sp}{\mathrm{Sp}}
\newcommand{\Mod}{\mathrm{Mod}}
\newcommand{\Alg}{\mathrm{Alg}}
\newcommand{\CAlg}{\mathrm{CAlg}}
\newcommand{\Cat}{\mathrm{Cat}}
\DeclareMathOperator{\Spec}{Spec}
\DeclareMathOperator{\GL}{GL}
\DeclareMathOperator{\BGL}{BGL}
\DeclareMathOperator{\Sym}{Sym}
\DeclareMathOperator{\Hoh}{H}
\DeclareMathOperator{\Eoh}{E}
\DeclareMathOperator{\Tor}{Tor}
\DeclareMathOperator{\Ext}{Ext}
\newcommand{\Hom}{\mathrm{Hom}}
\newcommand{\Map}{\mathrm{Map}} % enriched maps
\newcommand{\Fun}{\mathrm{Fun}}
\newcommand{\map}{\mathrm{map}} % mapping spaces (values in S)
\DeclareMathOperator{\End}{End}
\DeclareMathOperator{\Aut}{\!Aut}
\newcommand{\Lrm}{\mathrm{L}}
\newcommand{\Mrm}{\mathrm{M}}
\newcommand{\Hrm}{\mathrm{H}\,}
\newcommand{\Brm}{\mathrm{B}}
\newcommand{\Drm}{\mathrm{D}}
\newcommand{\Erm}{\mathrm{E}}
\newcommand{\Cscr}{\mathscr{C}} % general R-linear categories
\newcommand{\Dscr}{\mathscr{D}}
\newcommand{\EEcr}{\mathscr{E}}
\newcommand{\CC}{\mathds{C}}
\newcommand{\QQ}{\mathds{Q}}
\newcommand{\ZZ}{\mathds{Z}}
\newcommand{\EE}{\mathds{E}}
\newcommand{\FF}{\mathds{F}}
\newcommand{\PP}{\mathds{P}}
\renewcommand{\SS}{\mathds{S}}
\newcommand{\WW}{\mathds{W}}
\theoremstyle{plain}
\newtheorem{theorem}{Theorem}[section]
\newtheorem{lemma}[theorem]{Lemma}
\newtheorem{proposition}[theorem]{Proposition}
\newtheorem{corollary}[theorem]{Corollary}
\theoremstyle{definition}
\newtheorem{definition}[theorem]{Definition}
\newtheorem{question}[theorem]{Question}
\newtheorem{remark}[theorem]{Remark}
\let\oldmarginpar\marginpar
\renewcommand\marginpar[1]{\-\oldmarginpar[\raggedleft\footnotesize #1]%
{\raggedright\footnotesize #1}}
\begin{document}
\maketitle

\begin{abstract}
    \noindent
    We identify the $K$-theoretic fiber of a localization of ring spectra in terms of the
    $K$-theory of the endomorphism algebra spectrum of a Koszul-type complex.
    Using this identification, we provide a negative answer to
    a question of Rognes for $n>1$ by comparing the traces of the fiber of
    the map $\K(\BP{n})\rightarrow\K(\Erm(n))$ and of $\K(\BP{n-1})$ in rational topological Hochschild
    homology.

    \paragraph{Key Words.}Algebraic $K$-theory, structured ring spectra, and trace methods.

    \paragraph{Mathematics Subject Classification 2010.}
    Primary:
    \href{http://www.ams.org/mathscinet/msc/msc2010.html?t=19Dxx&btn=Current}{19D55},
    \href{http://www.ams.org/mathscinet/msc/msc2010.html?t=55Pxx&btn=Current}{55P43}.
    Secondary:
    \href{http://www.ams.org/mathscinet/msc/msc2010.html?t=16Exx&btn=Current}{16E40},
    \href{http://www.ams.org/mathscinet/msc/msc2010.html?t=18Exx&btn=Current}{18E30},
    \href{http://www.ams.org/mathscinet/msc/msc2010.html?t=19Dxx&btn=Current}{19D10}.
\end{abstract}

\setcounter{tocdepth}{1}
\tableofcontents

\section{Introduction}\label{sec:intro}

This paper is about the algebraic $K$-theory of structured ring spectra, or $\EE_1$-rings,
occurring in chromatic homotopy theory and the Ausoni--Rognes program for computing the $K$-theory of the sphere
spectrum. The two ring spectra of interest, the truncated Brown--Peterson
spectrum $\BP{n}$ and the Johnson--Wilson theory $\Erm(n)$
($n\geq 0$), are constructed using the complex cobordism spectrum $\MU$ and exist for any prime $p$; their homotopy rings are
$$\pi_*\BP{n}\iso\ZZ_{(p)}[v_1,\ldots,v_n]
\hspace{1em}\text{and}\hspace{1em}\pi_*\Erm(n)=\ZZ_{(p)}[v_1,\ldots,v_{n-1},v_n^{\pm 1}],$$
where $v_i$ has degree $2p^i-2$.

%The following question of Rognes first appears in~\cite{ausoni-rognes}*{0.1}, and is
%sometimes referred to in the literature as either the Rognes
%conjecture~\cite{blumberg-mandell} or the Ausoni--Rognes
%conjecture~\cite{barwick-q}*{Example~5.15}.
The following well-known question\footnote{The authors
of~\cite{ausoni-rognes} refer to this statement as an expectation.
It has come to be known in the literature as a conjecture, especially in the work
of Barwick and Blumberg--Mandell.} of Rognes first appears
in~\cite{ausoni-rognes}*{0.1}; see also~\cite{barwick-q}*{Example~5.15},
\cite{barwick-rognes}*{Example~5.15},
\cite{barwick-highercats}*{Example~11.17}, and the introduction
to~\cite{blumberg-mandell}.

\begin{question}[Rognes]\label{conj:ar}
    Is the sequence
    \begin{equation*}
        \K(\BP{n-1}_p)\rightarrow\K(\BP{n}_p)\rightarrow\K(\Erm(n)_p)
    \end{equation*}
    of connective algebraic $K$-theory spectra a fiber sequence of connective
    spectra?
\end{question}

The map $\K(\BP{n}_p)\rightarrow\K(\Erm(n)_p)$ is induced by
the map $\BP{n}\rightarrow\Erm(n)$ which inverts $v_n$ and $p$-completion, while
$\K(\BP{n-1}_p)\rightarrow\K(\BP{n}_p)$ is the \emph{transfer} map, obtained by viewing
$\BP{n-1}_p$, the cofiber of multiplication by $v_n$, as a compact $\BP{n}_p$-module.

When $n=0$, this is a special case of a theorem of Quillen~\cite{quillen}*{Theorem 5}, saying that there is a fiber
sequence $\K(\FF_p)\rightarrow\K(\ZZ_p)\rightarrow\K(\QQ_p)$. In this case it is common to
let $v_0=p$ in $\ZZ_p$ and $\BP{-1}=\Hrm\FF_p$. When $n=1$, the sequence
was conjectured by Rognes and proved by Blumberg and Mandell in~\cite{blumberg-mandell}. In fact, both Quillen
and Blumberg--Mandell prove $p$-local and integral versions of these statements.

The backdrop of the question of Rognes is the Ausoni--Rognes program to compute the
algebraic $K$-theory of the sphere spectrum while keeping control of chromatic
phenomenon. The layers in the chromatic tower are closely related to
$\K(\Erm_n)$, and it is expected that $\K(\Erm_n)$ should behave in a similar
way to $\K(\Erm(n))$. However, there is a fundamental problem with computing $\K(\Erm_n)$ and $\K(\Erm(n))$: they are nonconnective ring
spectra. There are no general methods for computing the $K$-groups of nonconnective ring
spectra, and it is in general even difficult to produce candidate elements. The little
success that has been had here is to relate the $K$-theory of the nonconnective ring
spectrum to the $K$-theory of connective ring spectra, where there are a variety of methods
of computation, such as using determinants and traces or studying $\mathrm{BGL}(R)^+$.

One reason to study $\K(\BP{n}_p)$ in its own right is that it is expected to exhibit redshift, a phenomenon
visible for small values of $n$ in which the $K$-theory of a ring spectrum related to chromatic height
$n$ carries chromatic height $n+1$ information~\cite{ausoni-rognes}*{p. 7}. For example,
$v_0$ acts trivially on $\FF_p$,
whereas $\K(\FF_p)_p\we\Hrm\ZZ_p$ and hence $\K(\FF_p)$ carries a highly non-trivial
$v_0$-self map: multiplication by $p$.

We give a negative answer to Rognes' question in all of the remaining cases.

\begin{theorem}\label{thm:intromain}
    For $n>1$, the sequence
    \begin{equation*}
         \K(\BP{n-1}_p)\rightarrow\K(\BP{n}_p)\rightarrow\K(\Erm(n)_p)
    \end{equation*}
    is not a fiber sequence of connective spectra.
\end{theorem}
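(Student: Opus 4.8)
The plan is to use trace methods to obstruct the existence of the fiber sequence. Suppose for contradiction that for some $n>1$ the sequence $\K(\BP{n-1}_p)\rightarrow\K(\BP{n}_p)\rightarrow\K(\Erm(n)_p)$ is a fiber sequence. Then the fiber $F$ of the localization map $\K(\BP{n}_p)\rightarrow\K(\Erm(n)_p)$ would be equivalent to $\K(\BP{n-1}_p)$. I would like to contradict this by applying a suitable trace — the cyclotomic trace, or already the Dennis trace to (rationalized) $\THH$ — and showing that $\THH(F)_\QQ$ and $\THH(\K(\BP{n-1})_p)_\QQ$, or the relevant homotopy groups thereof, do not agree.

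The key input, which I expect to be established earlier in the paper, is an identification of the fiber $F$: rather than being the $K$-theory of the "naive" quotient $\BP{n-1}$, it should be the $K$-theory of the endomorphism $\EE_1$-ring $\End_{\BP{n}}(\BP{n}/v_n)$ of the Koszul-type complex $\BP{n}/v_n$, viewed as a compact $\BP{n}$-module — this is the promised identification of the $K$-theoretic fiber of a localization in terms of an endomorphism algebra. So the first step is to invoke that identification: $F\we\K\!\bigl(\End_{\BP{n}}(\BP{n}/v_n)\bigr)$. The second step is to compute, or at least constrain, the homotopy ring of this endomorphism spectrum. Because $v_n$ is not a non-zero-divisor in any naive sense after one inverts nothing — more precisely because $\BP{n}/v_n$ is only an $\EE_1$- (or $A_\infty$-) algebra, not an $\EE_\infty$-one, and the Koszul dual picks up a polynomial/exterior class — $\pi_*\End_{\BP{n}}(\BP{n}/v_n)$ will differ from $\pi_*\BP{n-1}$ by an extra generator (a class detecting the self-extension, in negative degree $-(2p^n-1)$ or thereabouts), so that $\End_{\BP{n}}(\BP{n}/v_n)$ is not even connective and in particular is not $\BP{n-1}$.

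The third step is the trace computation. I would compute $\pi_*\bigl(\THH(\End_{\BP{n}}(\BP{n}/v_n))\otimes\QQ\bigr)$, or equally well the relevant piece of $\HH$ over $\QQ$, using that rationally $\THH$ of a ring spectrum is controlled by its rational homotopy ring together with the Hochschild homology of that graded ring; the extra generator in the endomorphism ring forces an extra polynomial (or divided-power) family in rational $\THH$ that is simply absent from $\THH(\BP{n-1})_\QQ=\HH(\pi_*\BP{n-1};\QQ)$. Tracking this discrepancy through the Dennis trace $\K\rightarrow\THH$ — using that the trace is rationally detected in low degrees, so that a genuine difference in $\pi_*\THH_\QQ$ forces a difference in $\pi_*\K_\QQ$ that cannot be reconciled with $F\we\K(\BP{n-1}_p)$ — yields the contradiction. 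I expect the main obstacle to be exactly the control of this rational $\THH$ comparison: one must verify that the extra class in the endomorphism ring genuinely survives to rational $\THH$ and is seen by the trace, rather than being killed by a differential or by the $p$-completion, and one must rule out the possibility that the transfer map $\K(\BP{n-1}_p)\rightarrow\K(\BP{n}_p)$ is nonetheless a section onto the fiber for some subtle reason invisible to $\THH_\QQ$; handling the interplay of $p$-completion with rationalization (so that the rational invariants one computes actually reflect the $p$-complete $K$-theory spectra in the statement) is the delicate point, and I would address it by working with $\THH$ before completion and invoking finiteness/compatibility properties of the spectra $\BP{n}$ and $\Erm(n)$.
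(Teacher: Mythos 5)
Your first two steps coincide with the paper's: the fiber of $\Kloc(\BP{n})\to\Kloc(\Erm(n))$ is identified as $\Kloc(\A{n-1})$ with $\A{n-1}=\End_{\BP{n}}(\BP{n}/v_n)^{\op}$, and $\pi_*\A{n-1}\iso\pi_*\BP{n-1}\otimes\Lambda\langle\epsilon_{1-2p^n}\rangle$ with an exterior class in degree $1-2p^n$, so that $\A{n-1}$ is nonconnective and visibly not $\BP{n-1}$. But your third step contains a genuine gap: the inference ``$\THH(\A{n-1})_\QQ$ and $\THH(\BP{n-1})_\QQ$ differ, hence the $K$-theories differ'' is not valid, because the trace is neither rationally injective nor surjective and a discrepancy in $\THH_\QQ$ need not be seen by any actual $K$-theory class. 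The $n=1$ case is a decisive counterexample to your proposed inference: there $\pi_*\A{0}_\QQ\iso\Lambda_\QQ\langle\epsilon_{1-2p}\rangle$ and $\HH^{\Hrm\QQ}_*(\A{0}_\QQ)\iso\QQ[\delta_{2-2p}]\otimes\Lambda_\QQ\langle\epsilon_{1-2p}\rangle$ already contains an extra polynomial family absent from $\HH^{\Hrm\QQ}_*(\Hrm\QQ)=\QQ$, yet by Blumberg--Mandell the map $\K(\ZZ_p)\to\K(\A{0})$ \emph{is} an equivalence. So your argument, as stated, would also ``disprove'' the true $n=0,1$ cases.

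What the paper does instead is produce explicit $K$-theory classes and detect their traces. A unit $x\in\pi_d\A{n-1}$ of positive degree $d$ gives a class in $\pi_{d+1}\BGL_1(\A{n-1})$, hence in $\K_{d+1}(\A{n-1})$, and the restriction of the trace to $\BGL_1$ is computed at the simplicial level to be the universal derivation: the image of $x=v_1^{a_1}\cdots v_{n-1}^{a_{n-1}}\epsilon_{1-2p^n}$ in $\HH^{\Hrm\QQ}_{d+1}(\A{n-1}_\QQ)$ is $v_1^{a_1}\cdots v_{n-1}^{a_{n-1}}\delta_{2-2p^n}+\sum_i a_i v_1^{a_1}\cdots v_i^{a_i-1}\cdots v_{n-1}^{a_{n-1}}\sigma_i\epsilon_{1-2p^n}$, which involves $\delta_{2-2p^n}$ and $\epsilon_{1-2p^n}$ and therefore lies outside the image of $\HH^{\Hrm\QQ}(\BP{n-1}_\QQ)$. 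This requires knowing that $\A{n-1}_\QQ$ is a free graded-commutative $\Hrm\QQ$-algebra (the paper proves $\A{n-1}_\QQ$ is equivalent to a commutative rational dga) so that the HKR-type computation and the derivation formula apply. The restriction $n>1$ enters precisely here: one needs a monomial of \emph{positive total degree} involving $\epsilon_{1-2p^n}$, which forces multiplication by some $v_i$ with $1\le i\le n-1$ and hence $n>1$. Your proposal never isolates this mechanism, which is why it cannot distinguish the $n>1$ cases from the $n=1$ case. The $p$-completion issue you flag is comparatively minor; the paper simply runs the argument integrally and observes that no changes are needed for the $p$-complete statement.
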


When $n=0,1$,
both Quillen and Blumberg--Mandell relate the fiber of $\K(\BP{n}_p)\rightarrow\K(\Erm(n)_p)$ to
$\K(\BP{n-1}_p)$ using a d\'evissage argument, and this perspective has been used by Barwick
and Lawson to prove some other examples of this localization behavior~\cite{barwick-lawson}.
Most attempts to answer Rognes' question for
$n>1$, such as the approach outlined by Barwick~\cite{barwick-q}, focus on conjectural d\'evissage
arguments. Our theorem shows that in some sense these cannot work in general. As a corollary of Theorem~\ref{thm:intromain}, the
$\infty$-category $Z(f_\star)$ of~\cite{barwick-q}*{Example 5.15} is not weakly contractible.

Our approach is Morita-theoretic. As motivation, consider the major result of
Thomason--Trobaugh~\cite{thomason-trobaugh}*{Theorem~7.4}, the localization theorem. It states that if $X$
is a quasi-compact and quasi-separated scheme, and if $U\subseteq X$ is a quasi-compact
Zariski open with complement $Z$, then there is a fiber sequence of nonconnective algebraic
$K$-theory spectra
\begin{equation*}
    \Kloc(\text{$X$ on $Z$})\rightarrow\Kloc(X)\rightarrow\Kloc(U),
\end{equation*}
where $\Kloc(\text{$X$ on $Z$})$ is the $K$-theory of perfect complexes on $X$
that are acyclic on $U$.

In general $\Kloc(\text{$X$ on $Z$})$ is \emph{not} equivalent to $\Kloc(Z)$, the $K$-theory of the closed
subscheme $Z$. The main examples when $\Kloc(\text{$X$ on $Z$})$ \emph{is} equivalent
to $\Kloc(Z)$ occur when $X$ and $Z$ are regular and noetherian, and the
argument passes through $G$-theory via d\'evissage.
However, B\"okstedt and Neeman~\cite{bokstedt-neeman}*{Proposition~6.1} showed that nevertheless $\Kloc(\text{$X$ on $Z$})$ is the
$K$-theory of a ring spectrum. More specifically, they showed that at the level of derived
categories, the kernel of the localization
\begin{equation*}
    \Drm_{\qc}(X)\rightarrow\Drm_{\qc}(U)
\end{equation*}
is generated by a single compact object $K$.
If $A=\End_X(K)^{\op}$ denotes the opposite of the dg-algebra of endomorphisms of $K$, then
\begin{equation*}
    \Kloc(A)\we\Kloc(\text{$X$ on $Z$}),
\end{equation*}
so we can write our localization fiber sequence as
\begin{equation}~\label{eq:ttring}
    \Kloc(A)\rightarrow\Kloc(X)\rightarrow\Kloc(U).
\end{equation}
Antieau and Gepner proved the analogue of this result for localizations of spectral
schemes in~\cite{ag}*{Proposition~6.9}, which motivated our approach here. However, the truncated
Brown--Peterson spectra are not known to admit $\EE_\infty$-ring structures, a
necessary input in~\cite{ag}. Noncommutative localization sequences have been studied
extensively in the $K$-theory of ordinary rings (see Neeman and Ranicki~\cite{neeman-ranicki} and the references
there). We prove a spectral noncommutative analogue
of~\eqref{eq:ttring}, which will be strong enough for our application to
Rognes' question,
and gives a partial generalization of Neeman-Ranicki. To state it, let $R$ be
an $\EE_1$-ring and let $r\in\pi_*R$ be a homogeneous element such that
$\{1,r,r^2,\ldots\}$ satisfies the right Ore condition. By Proposition
\ref{prop:ore}, there is an $\EE_1$-ring $R[r^{-1}]$ and an $\EE_1$-ring map
$R \rightarrow R[r^{-1}]$ inducing an isomorphism
$(\pi_*R)[r^{-1}]\iso\pi_*(R[r^{-1}])$.

\begin{theorem}\label{thm:abstractfiberseq}
    For $R$ and $r\in\pi_*R$ as above, there is a fiber sequence
    \begin{equation*}
        \Kloc(A)\rightarrow\Kloc(R)\rightarrow\Kloc(R[r^{-1}]),
    \end{equation*}
    of spectra, where $A=\End_R(R/r)^{\mathrm{op}}$.
\end{theorem}

The fiber of the map $\Kloc(\BP{n})\rightarrow\Kloc(\Erm(n))$ has been considered before
in Barwick's work~\cite{barwick-highercats}*{Example~11.16}, where it is observed that the
fiber is the $K$-theory of $v_n$-nilpotent $\BP{n}$-modules. One of the main contributions
of this paper is to identify the fiber as the $K$-theory of an $\EE_1$-ring.
This is the special case of Theorem \ref{thm:abstractfiberseq} when $R =
\BP{n}$ and $r = v_n$. In this case, we write $\A{n-1} =
\End_{\BP{n}}(\BP{n}/v_n)^{\mathrm{op}}$; in particular, we have a natural
$\EE_1$-ring map $\BP{n-1} \we \BP{n}/v_n \to \A{n-1}$.

\begin{theorem}\label{thm:maincofiber}
    For all $n\geq 0$, there is an $\EE_1$-ring $\A{n-1}$ and a fiber sequence
    \begin{equation*}
        \Kloc(\A{n-1})\rightarrow\Kloc(\BP{n})\rightarrow\Kloc(\Erm(n))
    \end{equation*}
    of spectra. Moreover, the transfer map $\Kloc(\BP{n-1})\rightarrow\Kloc(\BP{n})$
    factors through the map $\Kloc(\BP{n-1})\rightarrow\Kloc(\A{n-1})$ induced by $\BP{n-1} \to \A{n-1}$.
\end{theorem}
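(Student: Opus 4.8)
The plan is to apply the preceding localization theorem to the $\EE_1$-ring $R=\BP{n}$ and the homogeneous element $r=v_n\in\pi_{2p^n-2}\BP{n}$. Since $\pi_*\BP{n}\iso\ZZ_{(p)}[v_1,\ldots,v_n]$ is commutative, $v_n$ is central, so $\{1,v_n,v_n^2,\ldots\}$ satisfies the right (indeed the left) Ore condition automatically, and the theorem produces an $\EE_1$-ring $R[v_n^{-1}]=\BP{n}[v_n^{-1}]$ together with a fiber sequence $\Kloc(A)\to\Kloc(\BP{n})\to\Kloc(\BP{n}[v_n^{-1}])$, where $A=\End_{\BP{n}}(\BP{n}/v_n)^{\op}$ and $\BP{n}/v_n=\cofib\bigl(v_n\colon\Sigma^{2p^n-2}\BP{n}\to\BP{n}\bigr)$. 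Because $v_n$ is a non-zero-divisor, $\pi_*\BP{n}[v_n^{-1}]\iso\ZZ_{(p)}[v_1,\ldots,v_{n-1},v_n^{\pm1}]$, and since inverting the central element $v_n$ is the same telescopic construction that defines $\Erm(n)$ from $\BP{n}$, one checks $\BP{n}[v_n^{-1}]\we\Erm(n)$ as $\EE_1$-rings. Setting $\A{n-1}:=A$ then gives the asserted fiber sequence $\Kloc(\A{n-1})\to\Kloc(\BP{n})\to\Kloc(\Erm(n))$.

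For the final assertion, first record that $\BP{n}/v_n$ is equivalent, as a $\BP{n}$-module, to $\BP{n-1}$: both are $\BPP/(v_n,v_{n+1},\ldots)$, and the $\BPP$-algebra map $\BP{n}\to\BP{n-1}$ exhibits $\BP{n-1}$ as the cofiber of multiplication by $v_n$. Restriction of scalars along this ring map is an exact functor $\Perf_{\BP{n-1}}\to\Perf_{\BP{n}}$ sending the rank-one free module to $\BP{n-1}\we\BP{n}/v_n$, and, by definition, this functor induces the transfer on $K$-theory. Its essential image is contained in the thick subcategory $\langle\BP{n}/v_n\rangle\subseteq\Perf_{\BP{n}}$, which is precisely the kernel of $\Perf_{\BP{n}}\to\Perf_{\Erm(n)}$ identified in the proof of the localization theorem with $\Perf_{\A{n-1}}$, via an equivalence carrying the free $\A{n-1}$-module to $\BP{n}/v_n$. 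Thus restriction of scalars factors as $\Perf_{\BP{n-1}}\to\Perf_{\A{n-1}}\hookrightarrow\Perf_{\BP{n}}$; the first functor is the desired natural map, and by Schwede--Shipley Morita theory — a colimit-preserving functor between module \icats carrying unit to unit — it is induced by a map of $\EE_1$-rings $\BP{n-1}\to\A{n-1}$ (of left-multiplication type). Applying $\Kloc$ yields the factorization of the transfer through $\Kloc(\BP{n-1})\to\Kloc(\A{n-1})$.

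The main obstacle is the bookkeeping in the second paragraph: one must pin down the Morita equivalence $\Perf_{\A{n-1}}\simeq\langle\BP{n}/v_n\rangle$ supplied by the localization theorem together with all the left/right module and opposite-ring conventions, so that the composite $\Perf_{\BP{n-1}}\to\Perf_{\A{n-1}}\to\Perf_{\BP{n}}$ is genuinely equivalent to restriction of scalars along $\BP{n}\to\BP{n-1}$, not merely in agreement on the class of the unit. Secondary points requiring care are that $\BP{n}/v_n$ carries a compatible $\EE_1$-$\BP{n}$-bimodule structure — inherited from the $\BPP$-algebra structures upstairs, which is what gives ``left multiplication commutes with the $\BP{n}$-action'' a precise meaning and makes the map $\BP{n-1}\to\A{n-1}$ $\EE_1$-multiplicative — and the identification $\BP{n}[v_n^{-1}]\we\Erm(n)$, which follows since homotopy commutes with the filtered colimit computing the localization. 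Granting these, Theorem~\ref{thm:maincofiber} follows; combined with the rational $\THH$ computations of the subsequent sections, which show $\BP{n-1}\to\A{n-1}$ fails to be an equivalence for $n>1$, one also obtains Theorem~\ref{thm:intromain}.
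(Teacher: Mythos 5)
Your proposal is correct and follows essentially the same route as the paper: the fiber sequence is Theorem~\ref{thm:kfiber} applied to $R=\BP{n}$ and $r=v_n$ (with the Ore condition trivial by graded-commutativity of $\pi_*\BP{n}$ and the cofiber of $v_n$ identified with $\BP{n-1}$), and the factorization of the transfer comes from observing that restriction of scalars $\Mod_{\BP{n-1}}\to\Mod_{\BP{n}}$ lands in the $v_n$-nilpotent modules, which are Morita-equivalent to $\Mod_{\A{n-1}}$ with $\BP{n-1}\we\BP{n}/v_n$ corresponding to the free module. The paper packages this as a commutative triangle of compact-object-preserving functors (its Lemma following Theorem~\ref{thm:fiber1}) and constructs the ring map $\BP{n-1}\to\A{n-1}$ in Section~\ref{sec:tbps} exactly as you describe.
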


Here is an outline of how we use Theorem~\ref{thm:maincofiber} to prove
Theorem~\ref{thm:intromain}. To begin, we show that the homotopy ring of $\A{n-1}$ for $n>0$ is
\begin{equation*}
    \pi_*\A{n-1}\iso\ZZ_{(p)}[v_1,\ldots,v_{n-1}]\otimes\Lambda_{\ZZ_{(p)}}\langle\epsilon_{1-2p^n}\rangle,
\end{equation*}
where $\epsilon_{1-2p^n}$ has degree $1-2p^n$.
Moreover, we show that if the question of Rognes has a positive answer, then the natural map
\begin{equation*}
    \K(\BP{n-1})\rightarrow\K(\A{n-1})
\end{equation*}
must be an equivalence.

We use a rational trace argument to compare the $K$-theories of $\BP{n-1}$ and $\A{n-1}$.
After rationalization, we show that both
$\BP{n-1}_\QQ=\Hrm\QQ\otimes_\SS\BP{n-1}$ and
$\A{n-1}_\QQ=\Hrm\QQ\otimes_\SS\A{n-1}$ admit $\EE_\infty$-ring structures. Despite the dearth
of computational techniques for the $K$-theory of nonconnective ring spectra, the fact that
$\pi_*\BP{n-1}_\QQ$ and $\pi_*\A{n-1}_\QQ$ are both graded-commutative polynomial algebras allows us
to use trace methods to construct many classes in positive degree in $\Kloc(\A{n-1})$ that cannot come
from $\Kloc(\BP{n-1})$. 

To construct these classes, we study the commutative diagram
\begin{equation*}
    \xymatrix{
        \BGL_1(\BP{n-1})\ar[r]\ar[d]    &
    \Omega^\infty\K(\BP{n-1})\ar[r]\ar[d]    & \Omega^\infty\HH^{\Hrm\QQ}\left(\BP{n-1}_\QQ\right)\ar[d]\\
        \BGL_1(\A{n-1})\ar[r]    &   \Omega^\infty\K(\A{n-1})\ar[r]    &
    \Omega^\infty\HH^{\Hrm\QQ}\left(\A{n-1}_\QQ\right)\\
    }
\end{equation*}
of units and trace maps. A Hochschild--Kostant--Rosenberg-type isomorphism yields the
identification
\begin{equation}\label{eq:introthh}
    \HH^{\Hrm\QQ}_*\left(\A{n-1}_\QQ\right)\iso\QQ[v_1,\ldots,v_{n-1},\delta_{2-2p^n}]\otimes\Lambda_\QQ\langle\sigma_1,\ldots,\sigma_{n-1},\epsilon_{1-2p^n}\rangle,
\end{equation}
where the degree of $\sigma_i$ is $2p^i-1$. Moreover,
$\HH^{\Hrm\QQ}_*(\BP{n-1}_\QQ)\rightarrow\HH^{\Hrm\QQ}_*(\A{n-1}_\QQ)$ is the inclusion of
the subalgebra generated by the $v_i$ and $\sigma_i$ classes. Finally, we
compute the effect in homotopy of the compositions
$\BGL_1(\BP{n-1})\rightarrow\HH^{\Hrm\QQ}\left(\BP{n-1}_\QQ\right)$
and $\BGL_1(\A{n-1})\rightarrow\HH^{\Hrm\QQ}\left(\A{n-1}_\QQ\right)$ to prove
the following result.

\begin{theorem}
    If $x=v_1^{a_1}\cdots v_{n-1}^{a_{n-1}}\epsilon_{1-2p^n}$ is a monomial in $\pi_*\A{n-1}$
    of positive total degree, then the class
    \begin{equation*}
        v_1^{a_1}\cdots v_{n-1}^{a_{n-1}}\delta_{2-2p^n}+\sum_{i=1}^{n-1}a_i v_1^{a_1}\cdots v_i^{a_i-1}\cdots
        v_{n-1}^{a_{n-1}}\sigma_i\epsilon_{1-2p^n}
    \end{equation*}
    is in the image of $\K(\A{n-1})\rightarrow\HH^{\Hrm\QQ}(\A{n-1}_\QQ)$ and not
    in the image of $\HH^{\Hrm\QQ}(\BP{n-1}_\QQ)\rightarrow\HH^{\Hrm\QQ}(\A{n-1}_\QQ)$.
\end{theorem}

From this fact we immediately obtain Theorem~\ref{thm:intromain}. It is also clear why this
method does not contradict the known cases $n=0$ and $n=1$ of Rognes'
question. Indeed, in those cases there are no such monomials of positive total degree.

\begin{remark} Building on work of
    Rognes~\cites{rognes-2primary,rognes-whitehead},
    Blumberg and Mandell also give
    in~\cites{blumberg-mandell-ksphere,blumberg-mandell-tp} another approach to the
    computation of the $K$-groups of the sphere, which completely determines
    the homotopy type of the fiber of $\K(\SS)\rightarrow\K(\ZZ)$
    in terms of the $K$-groups of $\ZZ$, the homotopy groups of $\mathds{C}P^{\infty}_{-1}$, and the stable homotopy groups of spheres.
\end{remark}

\paragraph{Outline.}
Sections~\ref{sec:morita} and~\ref{sec:trace} contain our theorem on
localization sequences arising from inverting elements in $\EE_1$-rings and the trace
machinery we will use. Section~\ref{sec:calculations} provides a concrete method for computing the
trace map involving K\"ahler differentials, in some cases. The $\EE_1$-ring structures on
$\BP{n}$ are described briefly in Section~\ref{sec:tbps}. In Section~\ref{sec:obstruction}
we construct the $\EE_\infty$-ring structures on $\BP{n-1}_\QQ$ and $\A{n-1}_\QQ$.
Finally, in Section~\ref{sec:arc}, we give the proof of Theorem~\ref{thm:intromain},
resolving in the negative Rognes' question for $n>1$.

\paragraph{Notation.}
As a matter of convention, and unless noted otherwise, we will use $\infty$-categories throughout,
following Lurie's approach to stable homotopy theory developed in~\cite{ha}. We will speak of
$\EE_n$-rings, as opposed to $\EE_n$-ring spectra, of $\EE_1$-algebras over
$\EE_n$-rings for $n>1$, and of right modules, as opposed to right module spectra. If $\Cscr$ is an
$\infty$-category, we will write either $\Cscr(x,y)$ or $\map_{\Cscr}(x,y)$ for the space of
maps between two objects $x,y\in\Cscr$. If $\Cscr$ is in addition stable, we will write
$\Map_{\Cscr}(x,y)$ for the mapping spectrum. In the important case where $\Cscr=\Mod_A$,
the stable $\infty$-category of right $A$-modules for an $\EE_1$-ring $A$, we write
$\map_A(x,y)$ and $\Map_A(x,y)$ for the mapping space and spectrum.

\paragraph{Acknowledgments.}
We would like to thank Andy Baker, Bj{\o}rn Dundas, John Greenlees, Owen Gwilliam, Mike Hopkins,
Ayelet Lindenstrauss, Charles Rezk, Christian Schlichtkrull, and Sean Tilson for conversations about this work.
This project emerged while all three
authors participated in the 2014 Algebraic Topology Program at MSRI, and we thank both the
institute for its hospitality and the organizers of the program for creating such a
stimulating environment. The first named author would like to give special thanks for
Yank{\i} Lekili who asked him an apparently unrelated question which eventually led to the discovery
of a sign problem obstructing progress. Special thanks are reserved for Clark Barwick, Andrew Blumberg,
Mike Mandell, and John Rognes for their comments on an early draft. Finally,
we thank the referees for their careful reading of the text.

\section{The $K$-theory fiber of a localization of rings}\label{sec:morita}

In this section, we introduce algebraic $K$-theory and prove a theorem which describes the
fiber in $K$-theory of a localization of an $\EE_1$-ring in certain cases. Note that we
follow Lurie~\cite{ha} in terminology wherever possible. In particular,
using~\cite{bgt1}, we will view
connective algebraic $K$-theory (denoted $\K)$ and nonconnective algebraic $K$-theory (denoted
$\Kloc$) as a functor defined on small stable $\infty$-categories. There is no
substantive difference between this approach and the approach via Waldhausen categories:
see~\cite{bgt1}*{Section 7.2}.

\subsection{$K$-theory}

We start by introducing some terminology about small stable $\infty$-categories.

\begin{definition}
    \begin{enumerate}
        \item   A small stable $\infty$-category $\Cscr$ is \emph{idempotent complete} if it
            is closed under summands. The $\infty$-category of small stable idempotent complete
            $\infty$-categories and exact functors between them is denoted by $\Cat_{\infty}^{\perf}$.
        \item   A sequence $\Cscr\xrightarrow{f}\Dscr\xrightarrow{g}\EEcr$ in
            $\Cat_{\infty}^{\perf}$ is \emph{exact} if the composite $\Cscr\rightarrow\EEcr$
            is zero, $\Cscr\rightarrow\Dscr$ is fully faithful, and
            $\Dscr/\Cscr\rightarrow\EEcr$ is an equivalence. Note that the cofiber is taken
            in $\Cat_{\infty}^{\perf}$ and is the idempotent completion of the usual
            Verdier quotient.
        \item   Such a sequence is \emph{split-exact} if moreover there exist right adjoints
            $f_\rho\colon\Dscr\rightarrow\Cscr$ and $g_\rho\colon\EEcr\rightarrow\Dscr$ such that
            $f_\rho\circ f\we\id_\Cscr$ and $g\circ g_\rho\we\id_\EEcr$.
        \item   Let $\Sp$ denote the $\infty$-category of spectra, as defined
            in~\cite{ha}*{Section~1.4}. An \emph{additive invariant} of small stable $\infty$-categories is a
            functor $F\colon\Cat_\infty^\perf\rightarrow\Sp$ that takes split-exact sequences to
            split fiber sequences of spectra.
        \item   A \emph{localizing invariant} of small stable $\infty$-categories is a
            functor $F\colon\Cat_\infty^\perf\rightarrow\Sp$ that takes exact sequences to fiber
            sequences of spectra.
    \end{enumerate}
\end{definition}

To connect exact sequences in $\Cat_\infty^\perf$ to localization, let
$\Mod_\Cscr=\Fun^{\ex}(\Cscr^{\op},\Sp)$, the stable presentable $\infty$-category of right
$\Cscr$-modules in spectra. An exact sequence in $\Cat_\infty^\perf$ then gives rise by
left Kan extensions to an exact sequence
\begin{equation*}
    \Mod_\Cscr\rightarrow\Mod_\Dscr\rightarrow\Mod_\EEcr,
\end{equation*}
and the functor $\Mod_\Dscr\rightarrow\Mod_\EEcr$ is a localization, in the sense that its
right adjoint is fully faithful. Moreover, if $L\colon\Mod_{\Dscr}\rightarrow\Mod_{\EEcr}$ is a
localization such that the kernel is generated by the $\infty$-category $\Cscr$ of objects
$x$ of $\Dscr$ such that $L(x)\we 0$, then
\begin{equation*}
    \Cscr\rightarrow\Dscr\rightarrow\EEcr
\end{equation*}
is an exact sequence in $\Cat_\infty^\perf$.

\emph{Additive, or connective, $K$-theory} is an additive invariant
$\K\colon\Cat_\infty^\perf\rightarrow\Sp$, see~\cite{bgt1}*{Section 7}
and~\cite{bgt-endomorphisms}*{Section~2}. When $\Cscr$
is the $\infty$-category associated to a Waldhausen category by hammock localization, $\K(\Cscr)$
agrees with Waldhausen $K$-theory by~\cite{bgt-endomorphisms}*{Theorem~2.5}.

\emph{Localizing, or nonconnective, $K$-theory} is a localizing invariant
$\Kloc\colon\Cat_\infty^\perf\rightarrow\Sp$. It is defined in~\cite{bgt1}*{Section
9}. The idea goes back to Bass. It
agrees with the $K$-theory of Thomason and Trobaugh for the relevant cases of
schemes. By construction, there is a natural equivalence
$\K(\Cscr)\rwe\tau_{\geq 0}\Kloc(\Cscr)$, where $\tau_{\geq 0}\Kloc(\Cscr)$ is
the connective cover.

The question of Rognes is as stated about additive $K$-theory, but it will prove
easier to pass first through nonconnective $K$-theory, essentially for the results about
localizations of $\EE_1$-rings, to which we now turn.

\begin{definition}
    If $A$ is an $\EE_1$-ring, then $\K(A)$ and $\Kloc(A)$ are defined as the connective and
    nonconnective $K$-theories of $\Mod_A^\omega$, the small stable $\infty$-category of
    compact right $A$-modules.
\end{definition}

\begin{definition}
    Let $\Cscr$ be a small stable idempotent complete $\infty$-category. We will say that a compact object $M$ of $\Cscr$
    \emph{generates} $\Cscr$ if $\Map_{\Mod_\Cscr}(M,N)\we 0$ implies that $N\we 0$
    for $N$ in $\Mod_\Cscr$. Here $\Map_{\Mod_\Cscr}(M,N)$ denotes the mapping spectrum from $M$ to $N$.
\end{definition}

The following theorem is the main theorem of Morita theory for small stable
$\infty$-categories.

\begin{theorem}[Schwede--Shipley~\cite{schwede-shipley}*{Theorem~3.3}]
    Suppose that $\Cscr$ is a small idempotent complete stable $\infty$-category
    generated by an object $M$. If $A=\End_{\Cscr}(M)^\mathrm{op}$, then $\Mod_A\we\Mod_\Cscr$ and
    hence $\Mod_A^\omega\we\Cscr$.
\end{theorem}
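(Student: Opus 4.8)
The plan is to produce the equivalence via the classical Schwede--Shipley adjunction: set $F = \Map_{\Mod_\Cscr}(M,-)$ and let $G$ be its left adjoint, the relative tensor product $(-)\otimes_A M$, and show these functors are mutually inverse by a conservativity argument. First I would record the ambient structure of $\Mod_\Cscr = \Fun^{\ex}(\Cscr^{\op},\Sp)$: it is stable and presentable, the Yoneda embedding $\Cscr\hookrightarrow\Mod_\Cscr$ is fully faithful and exhibits $\Mod_\Cscr$ as $\Ind(\Cscr)$, and---crucially because $\Cscr$ is idempotent complete---it identifies $\Cscr$ with the subcategory $\Mod_\Cscr^\omega$ of compact objects. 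In particular $M$, being an object of $\Cscr$, is automatically compact in $\Mod_\Cscr$, so the only real content in the hypothesis ``$M$ generates'' is the implication $\Map_{\Mod_\Cscr}(M,N)\we 0\Rightarrow N\we 0$.

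Next I would set up $F$ as a functor of module categories. Precomposition by endomorphisms makes $\Map_{\Mod_\Cscr}(M,N)$ a module over $\End_\Cscr(M)$, and with the convention $A=\End_\Cscr(M)^{\op}$ this is precisely a right $A$-module, so $F$ refines to $F\colon\Mod_\Cscr\rightarrow\Mod_A$. Since $M$ is compact and colimits in $\Mod_A$ are computed on underlying spectra, $F$ preserves all colimits; hence by the adjoint functor theorem it has a left adjoint $G\colon\Mod_A\rightarrow\Mod_\Cscr$, which one identifies with $(-)\otimes_A M$ (the op in $A$ is exactly what makes $M$ a left $A$-module, so that this tensor product is defined). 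By construction the canonical map $A\rightarrow\Map_{\Mod_\Cscr}(M,M)$ is an equivalence, i.e. $F(M)\we A$ as a right $A$-module, equivalently $G(A)\we M$.

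The core of the argument is then two bookkeeping steps. The unit $\id_{\Mod_A}\rightarrow FG$ is a natural transformation of colimit-preserving endofunctors of $\Mod_A$ that is an equivalence on the compact generator $A$; since $\Mod_A$ is generated under colimits by $A$, the unit is an equivalence. For the counit $GF\rightarrow\id_{\Mod_\Cscr}$, observe that $F$ is conservative: $F(N)\we 0$ means $\Map_{\Mod_\Cscr}(M,N)\we 0$, which by the generation hypothesis forces $N\we 0$. The triangle identities, together with the fact that the unit is an equivalence, show that $F$ applied to the counit is an equivalence, so conservativity of $F$ makes the counit itself an equivalence. Therefore $F$ and $G$ are inverse equivalences $\Mod_\Cscr\we\Mod_A$; restricting to compact objects (which any equivalence of presentable stable $\infty$-categories preserves) yields $\Cscr\we\Mod_\Cscr^\omega\we\Mod_A^\omega$.

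I expect the only genuine friction to be handedness bookkeeping---checking that the endomorphism action really produces a \emph{right} $A$-module for $A=\End_\Cscr(M)^{\op}$, so that both the refinement of $F$ and the definition of $G$ via $\otimes_A M$ are the correct ones---and the identification $\Ind(\Cscr)^\omega\we\Cscr$, where idempotent completeness of $\Cscr$ is used. Everything else is a formal consequence of compact generation plus the conservativity/colimit argument, and is in any case contained in Lurie~\cite{ha} and the original work of Schwede--Shipley~\cite{schwede-shipley}.
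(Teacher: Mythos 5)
Your argument is correct and is the standard Morita/Schwede--Shipley proof (compact generator, conservativity of $\Map_{\Mod_\Cscr}(M,-)$, unit checked on $A$ and extended by colimits); note that the paper itself gives no proof of this statement, citing \cite{schwede-shipley} instead, and the same argument appears in \cite{ha}*{Section 7.1.2}. The one point you flag as friction, the handedness of the $\End_\Cscr(M)$-action by precomposition versus the convention that $\Mod_A$ means right $A$-modules, is indeed only bookkeeping and is exactly what the $(-)^{\op}$ in the definition of $A$ is there to absorb.
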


In this situation, we will also say that $M$ is a compact generator of $\Mod_\Cscr$ (remembering that $\Cscr\we\Mod_{\Cscr}^{\omega}$).

\begin{corollary}\label{cor:kmor}
    In the situation of the theorem, $\K(A)\we\K(\Cscr)$ and $\Kloc(A)\we\Kloc(\Cscr)$.
\end{corollary}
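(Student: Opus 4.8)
The plan is to read this off directly from the Schwede--Shipley theorem together with the mere functoriality of $\K$ and $\Kloc$. Recall that by definition $\K(A)$ and $\Kloc(A)$ are the connective and nonconnective $K$-theories of $\Mod_A^\omega$, the small stable $\infty$-category of compact right $A$-modules. First I would observe that $\Mod_A^\omega$ is idempotent complete — the compact objects of any presentable stable $\infty$-category are closed under retracts — so that it is a legitimate object of $\Cat_\infty^\perf$, the domain on which $\K$ and $\Kloc$ are defined. The Schwede--Shipley theorem furnishes an equivalence $\Mod_A\we\Mod_\Cscr$ of presentable stable $\infty$-categories; being an equivalence, it preserves compactness and hence restricts to an equivalence $\Mod_A^\omega\we\Cscr$ in $\Cat_\infty^\perf$ (this restriction is exactly the last clause of the cited theorem).

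It then remains only to evaluate the functors $\K,\Kloc\colon\Cat_\infty^\perf\to\Sp$ on this equivalence: any functor carries equivalences to equivalences, so $\K(A)=\K(\Mod_A^\omega)\we\K(\Cscr)$ and $\Kloc(A)=\Kloc(\Mod_A^\omega)\we\Kloc(\Cscr)$. There is no genuine obstacle; the only points requiring care are the bookkeeping that the Morita equivalence of module categories restricts to compact objects — supplied by the theorem — and the remark that $\Mod_A^\omega$ lies in the domain of $\K$ and $\Kloc$, which is the idempotent-completeness observation above. The same argument of course applies verbatim to any additive or localizing invariant in place of $\K$ or $\Kloc$.
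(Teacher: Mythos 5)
Your proposal is correct and is exactly the intended argument: the Schwede--Shipley theorem supplies the equivalence $\Mod_A^\omega\we\Cscr$ in $\Cat_\infty^\perf$, and the corollary follows by functoriality of $\K$ and $\Kloc$ (the paper leaves this as immediate and gives no separate proof). The remarks about idempotent completeness of $\Mod_A^\omega$ and the applicability to arbitrary additive or localizing invariants are accurate and harmless additions.
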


We will use heavily a map from $\BGL_1(A)$ to
$\Omega^\infty\K(A)\we\Omega^\infty\Kloc(A)$, the
algebraic space of $A$.

\begin{definition}
    The group of units of an $\EE_1$-ring $A$ is defined as the homotopy pullback in the
    square
    \begin{equation*}
        \xymatrix{
            \GL_1(A)\ar[r]\ar[d]   &   \Omega^\infty A\ar[d]\\
            \pi_0A^\times\ar[r]     &   \pi_0A.
        }
    \end{equation*}
    Multiplication induces a grouplike $\EE_1$-algebra structure on the space $\GL_1(A)$, which we can
    deloop to obtain $\BGL_1(A)$.
\end{definition}

Unwinding the approach of~\cite{bgt1}*{Section
7}, we see that there is an equivalence
\begin{equation*}
    \Omega^\infty\K(A)\we\Omega|\left(S_\bullet\Mod_A^\omega\right)^{\eq}|,
\end{equation*}
where $S_\bullet\Mod_A^\omega$ is the simplicial $\infty$-category whose
$\infty$-category of $n$-simplices $S_n\Mod_A^\omega$ is equivalent to the functor
category $\mathrm{Fun}([n-1],\Mod_A^\omega)$, and where $\Mod_A^\omega$ denotes the
$\infty$-category of compact objects in $\Mod_A$. Here, $[n-1]$ denotes the
nerve of the graph $0\rightarrow 1\rightarrow\cdots\rightarrow n-1$, and
$[-1]=\emptyset$.
% As indicated in the notation, this is
% an infinite loop space, and we let $\K(A)$ be the spectrum obtained from delooping.

\begin{definition}
    The \emph{units map} $\BGL_1(A)\rightarrow \Omega^{\infty}\K(A)$ is induced by the map from $\BGL_1(A)$ into
    the $1$-skeleton $\map([0],\Mod_A^\omega)$ of $S_\bullet\Mod_A^\omega$ given by the subspace of functors
    $[0]\rightarrow\Mod_A^\omega$ sending $0$ to $A$.
\end{definition}

In fact, similar reasoning defines a map from $\Brm\Aut_A(P)$ to $\Omega^\infty\K(A)$
for any compact right $A$-module $P$. There is an induced map
\begin{equation*}
    \coprod_{\text{$P$ f.g. projective}}\Brm\Aut_A(P)\rightarrow\Omega^\infty\K(A)
\end{equation*}
of $\EE_\infty$-spaces, where the $\EE_\infty$-space structure on the left-hand
side is induced by direct sum of finitely generated projective modules\footnote{Just as in ordinary algebra, a projective
$A$-module is a retract of a free $A$-module $A^I$ for some set $I$.
See~\cite{ha}*{Section~7.2.2}.}. Since $\Omega^\infty\K(A)$
is grouplike, the map factors through the group completion map
$\coprod_{\text{$P$ f.g.
projective}}\Brm\Aut_A(P)\rightarrow\Omega\Brm\left(\coprod_{\text{$P$ f.g.
projective}}\Brm\Aut_A(P)\right)$.

\begin{theorem}[\cite{ekmm}*{Theorem VI.7.1}]\label{thm:groupcompletion}
    If $A$ is connective, the induced map
    $$\Omega\Brm\left(\coprod_{\text{$P$ f.g. projective}}\Brm\Aut_A(P)\right)\rightarrow\Omega^\infty\K(A)$$ is an equivalence.
\end{theorem}

Note that the presentation of~\cite{ekmm} uses only finitely generated free
$A$-modules. The proof of the result stated here is no different, and has the correct
$\K_0$ group.

\subsection{Localizations of $\EE_1$-rings}\label{sub:loce1}

We are interested in identifying the fiber of $\Kloc(A)\rightarrow\Kloc(B)$,
where $A$ is an $\EE_1$-ring and $B$ is a localization of $A$
obtained by inverting some elements of $\pi_*A$. It is not always possible to invert
elements in noncommutative rings. One way to ensure that it is possible is to impose the
following Ore condition.

\begin{definition}
    Let $A$ be an $\EE_1$-ring, and let $S\subset\pi_*A$ be a multiplicatively closed set of homogeneous elements.
    Then, $S$ satisfies the right Ore condition if (1) for every pair of elements
    $x\in\pi_*A$ and $s\in S$ there exist $y\in\pi_*A$ and $t\in S$ such that $xt=sy$, and
    (2) if $sx=0$ for some $x\in\pi_*A$ and $s\in S$, then there exists $t\in S$ such that $xt=0$.
\end{definition}

The conditions are particularly easy to verify when $\pi_*A$ is in fact a graded-commutative
ring. The following proposition collects several facts due to Lurie about the localization of an
$\EE_1$-ring at a multiplicative set satisfying the right Ore condition.

\begin{proposition}[Lurie]\label{prop:ore}
    Let $A$ be an $\EE_1$-ring, and let $S\subseteq\pi_*A$ be a multiplicatively closed set
    satisfying the right Ore condition. Then,
    \begin{enumerate}
        \item   there is an $\EE_1$-ring $S^{-1}A$ and an $\EE_1$-ring map $A\rightarrow
            S^{-1}A$, where $\pi_*S^{-1}A\iso S^{-1}\pi_*A$;
        \item   the induced functor $\Mod_A\rightarrow\Mod_{S^{-1}A}$ is a localization;
        \item   the induced functor $\Mod_A\rightarrow\Mod_{S^{-1}A}$ has as fiber the full
            subcategory $\Mod_A^{\mathrm{Nil}(S)}$ of $S$-nilpotent $A$-modules, i.e., those right
            $A$-modules $M$ such that for every homogeneous element $x\in\pi_*M$, there exists $s\in S$ such that
            $xs=0$.
    \end{enumerate}
\end{proposition}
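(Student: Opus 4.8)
The plan is to deduce all three parts from general facts about localizations of presentable stable $\infty$-categories, following the framework Lurie develops in the treatment of Ore localization in \cite{ha}. First I would address (1). The right Ore condition guarantees that the multiplicative set $S$ admits a \emph{calculus of right fractions} in the homotopy category, so I would construct $S^{-1}A$ as the endomorphism $\EE_1$-ring of a suitable localization of $A$ as an $A$-module: concretely, form the filtered colimit $S^{-1}A := \colim_{s} \Sigma^{-|s|} A$ along the diagram indexed by $S$, where the transition maps are right multiplication. The Ore condition (1) is exactly what makes this diagram filtered (so that the colimit is well-behaved and computes $\pi_*(S^{-1}A) \iso S^{-1}\pi_*A$ by commuting homotopy groups past filtered colimits), and condition (2) ensures the localization map is compatible with the ring structure, i.e.\ that $S^{-1}A$ inherits a canonical $\EE_1$-structure with an $\EE_1$-map $A \to S^{-1}A$. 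The computation of homotopy groups is then the standard fact that $\pi_*$ of a filtered colimit is the colimit of the $\pi_*$, here giving the algebraic Ore localization $S^{-1}\pi_*A$.

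For (2), I would show that base change along $A \to S^{-1}A$ exhibits $\Mod_{S^{-1}A}$ as the localization of $\Mod_A$ at the class of maps that become equivalences after $- \otimes_A S^{-1}A$. The key point is that restriction of scalars $\Mod_{S^{-1}A} \to \Mod_A$ is fully faithful: one checks that for $M, N \in \Mod_{S^{-1}A}$ the natural map $\Map_{S^{-1}A}(M,N) \to \Map_A(M,N)$ is an equivalence, which reduces (by writing $M$ as a colimit of shifts of $S^{-1}A$) to the statement that $\Map_A(S^{-1}A, N) \we N$ when $N$ is already an $S^{-1}A$-module — and this holds because the elements of $S$ act invertibly on $\pi_*N$, so smashing with the colimit $S^{-1}A = \colim \Sigma^{-|s|}A$ does nothing. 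Full faithfulness of the right adjoint is precisely the assertion that $\Mod_A \to \Mod_{S^{-1}A}$ is a localization.

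For (3), I would identify the kernel of the localization in (2) with $\Mod_A^{\mathrm{Nil}(S)}$. An $A$-module $M$ lies in the kernel iff $S^{-1}A \otimes_A M \we 0$, i.e.\ iff $S^{-1}\pi_*M = 0$ (again using that $\pi_*$ commutes with the filtered colimit defining $S^{-1}A$, so $\pi_*(S^{-1}A \otimes_A M) \iso S^{-1}\pi_*M$). Now $S^{-1}\pi_*M = 0$ means exactly that every homogeneous $x \in \pi_*M$ is killed by some $s \in S$ — here I would invoke Ore condition (2) to see that the localized module is zero precisely under this elementwise-torsion hypothesis, since without it the naive localization of the graded module could be nonzero. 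This shows the kernel is $\Mod_A^{\mathrm{Nil}(S)}$; that it is the fiber of the localization functor in the appropriate sense is then formal, since the fiber of a localization of presentable stable $\infty$-categories is its kernel, with the inclusion $\Mod_A^{\mathrm{Nil}(S)} \hookrightarrow \Mod_A$ admitting the expected adjoints.

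I expect the main obstacle to be the careful bookkeeping in part (1): verifying that the filtered colimit construction actually produces an $\EE_1$-ring (not merely an $A$-bimodule) and that the multiplication is the unique one compatible with $A \to S^{-1}A$. This is where the Ore conditions do real work rather than being a formality — condition (1) for filteredness and condition (2) to rule out pathologies in the multiplication — and it is the step genuinely attributable to Lurie. The remaining parts (2) and (3) are then comparatively soft consequences of the theory of localizations of compactly generated stable $\infty$-categories together with the homotopy-group computation from (1).
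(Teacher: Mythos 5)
Your outline is essentially a reconstruction of the argument behind the paper's citation, but the logical order is genuinely different from the one the paper (following Lurie, \cite{ha}, Propositions 7.2.4.20 and 7.2.4.25 and Remark 7.2.4.18) actually uses, and the difference matters. In Lurie's setup one first forms the localization $\Mod_A\rightarrow\Mod_A^{\mathrm{Loc}(S)}$ whose kernel is the localizing subcategory generated by the modules $A/sA$; the image $LA$ of $A$ is a compact generator of the local category, and $S^{-1}A$ is \emph{defined} to be $\End_A(LA)^{\op}$. With that definition, parts (2) and (3) are formal consequences of Morita theory and hold for \emph{any} set $S$ of homogeneous elements; the right Ore condition enters only in part (1), to identify $\pi_*\End_A(LA)\iso\pi_*LA\iso S^{-1}\pi_*A$ via exactly the filtered-colimit description of $LA$ that you write down. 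Your proposal inverts this: you build the colimit first and then derive (2) and (3) from it.

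That inverted route can be made to work, but the step you yourself flag as the main obstacle --- endowing $\colim_s\Sigma^{-|s|}A$ with an $\EE_1$-structure ``directly'', with Ore condition (2) somehow ``ensuring compatibility with the ring structure'' --- is the one genuine soft spot. Lifting a filtered diagram of right-multiplication maps from the homotopy category to $\Mod_A$ and then producing a coherent multiplication on its colimit is real work, and it is precisely the work that the endomorphism-ring definition dissolves: $\End_A(LA)^{\op}$ is an $\EE_1$-ring for free, and one only needs to check that $LA$ is $S$-local so that $\Map_A(LA,LA)\we\Map_A(A,LA)\we LA$, which reduces to the homotopy-group computation. Your write-up actually gestures at both constructions; if you commit to the endomorphism-ring one, the remaining verifications (full faithfulness of restriction of scalars for (2), and the identification of the kernel with $\Mod_A^{\mathrm{Nil}(S)}$ via $\pi_*(M\otimes_AS^{-1}A)\iso\pi_*(M)S^{-1}$ for (3), where Ore condition (2) is indeed what makes vanishing of the localized module equivalent to elementwise $S$-torsion) go through essentially as you sketch.
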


\begin{proof}
    For (1), see~\cite{ha}*{Proposition 7.2.4.20} and~\cite{ha}*{Remark 7.2.4.26}. This
    identifies $S^{-1}A$ with an $\EE_1$-ring $A[S^{-1}]$, which is by definition the
    endomorphism algebra of the image of $A$ under a functor
    $\Mod_A\rightarrow\Mod_A^{\mathrm{Loc}(S)}$ and serves as a compact generator for $\Mod_A^{\mathrm{Loc}(S)}$. This functor is a localization with kernel
    the $S$-nilpotent $A$-modules
    by~\cite{ha}*{Proposition~7.2.4.17}. This proves (2) and (3).
\end{proof}

The result of this section identifies the fiber in $\K$-theory of an Ore localization
generated by a single element. The role of $S$-nilpotent modules has also been studied
by Barwick~\cite{barwick-highercats}*{Proposition 11.15}. Our identification below of a
special $S$-nilpotent module can be
viewed as a partial generalization of the work of Neeman and Ranicki~\cite{neeman-ranicki} to noncommutative
$\EE_1$-rings.

\begin{theorem}\label{thm:kfiber}
    Let $A$ be an $\EE_1$-ring, and let $S=\{1,r,r^2,\ldots\}$ be a multiplicatively closed
    subset of $\pi_*A$
    satisfying the right Ore condition and generated by a single homogeneous element
    $r\in\pi_dA$. If $M$ denotes the cofiber of the map $\Sigma^dA\xrightarrow{r} A$ of right
    $A$-modules, then there is a fiber sequence
    \begin{equation*}
        \Kloc\left(\End_A(M)^{\mathrm{op}}\right)\rightarrow\Kloc(A)\rightarrow\Kloc(S^{-1}A)
    \end{equation*}
    of spectra.
\end{theorem}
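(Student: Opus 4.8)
The plan is to realize the sequence as a localization (exact) sequence of idempotent-complete stable $\infty$-categories and then apply the fact that $\Kloc$ is a localizing invariant. First I would invoke Proposition~\ref{prop:ore}: since $S=\{1,r,r^2,\ldots\}$ satisfies the right Ore condition, we get the $\EE_1$-ring $S^{-1}A$, the localization functor $\Mod_A\rightarrow\Mod_{S^{-1}A}$, and the identification of its fiber (kernel) as the full subcategory $\Mod_A^{\mathrm{Nil}(S)}$ of $S$-nilpotent modules. Passing to compact objects, this should give an exact sequence
\begin{equation*}
    \left(\Mod_A^{\mathrm{Nil}(S)}\right)^\omega\rightarrow\Mod_A^\omega\rightarrow\Mod_{S^{-1}A}^\omega
\end{equation*}
in $\Cat_\infty^\perf$; here one must be slightly careful, since the subcategory of \emph{compact $A$-modules that are $S$-nilpotent} need not be all compact objects of $\Mod_A^{\mathrm{Nil}(S)}$, but the idempotent completion of the former agrees with the compact objects of the latter because $\Mod_A^{\mathrm{Nil}(S)}$ is compactly generated, which is exactly the content needed to feed into the localization-theorem formalism recalled after the definition of localizing invariant. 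Applying $\Kloc$ then yields a fiber sequence
\begin{equation*}
    \Kloc\left(\left(\Mod_A^{\mathrm{Nil}(S)}\right)^\omega\right)\rightarrow\Kloc(A)\rightarrow\Kloc(S^{-1}A).
\end{equation*}

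The remaining, and main, task is to identify $\left(\Mod_A^{\mathrm{Nil}(S)}\right)^\omega$ with $\Mod_{\End_A(M)^{\mathrm{op}}}^\omega$ via Morita theory (Corollary~\ref{cor:kmor}), which amounts to showing that $M=\cofib(\Sigma^dA\xrightarrow{r}A)$ is a compact generator of $\Mod_A^{\mathrm{Nil}(S)}$. Compactness of $M$ in $\Mod_A$ is clear since it is a finite cell $A$-module, and $M$ is $S$-nilpotent because multiplication by $r$ on $\pi_*M$ is zero by construction of the cofiber sequence (every element of $\pi_*M$ is killed by $r\in S$); moreover $M$ lies in the thick subcategory generated by genuinely compact objects so it is compact in $\Mod_A^{\mathrm{Nil}(S)}$ as well. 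For generation I would argue: if $N\in\Mod_A^{\mathrm{Nil}(S)}$ has $\Map_A(M,N)\we 0$, then applying $\Map_A(-,N)$ to the cofiber sequence $\Sigma^dA\xrightarrow{r}A\rightarrow M$ shows that multiplication by $r$ is an equivalence on $N$; but $N$ being $S$-nilpotent means $N\we\colim$ of its truncations each of which is $r$-power-torsion, so $r$ acting invertibly forces $N\we 0$ — concretely, $N\we N[r^{-1}]$, and $N[r^{-1}]$ is both $S$-nilpotent and $S$-local, hence zero. Thus $M$ generates, and by Schwede–Shipley $\Mod_A^{\mathrm{Nil}(S)}\we\Mod_{\End_A(M)^{\mathrm{op}}}$, whence $\left(\Mod_A^{\mathrm{Nil}(S)}\right)^\omega\we\Mod_{\End_A(M)^{\mathrm{op}}}^\omega$ and $\Kloc\left(\left(\Mod_A^{\mathrm{Nil}(S)}\right)^\omega\right)\we\Kloc(\End_A(M)^{\mathrm{op}})$.

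I expect the main obstacle to be the bookkeeping around idempotent-completeness and compact generation: one needs that the localization $\Mod_A\rightarrow\Mod_{S^{-1}A}$ of presentable stable $\infty$-categories restricts to an exact sequence of \emph{small} idempotent-complete categories with the correct fiber — i.e., that $\Mod_{S^{-1}A}^\omega$ really is the idempotent completion of the Verdier quotient $\Mod_A^\omega/\left(\Mod_A^{\mathrm{Nil}(S)}\right)^\omega$, and that no compact objects are lost or spuriously created in the quotient. This is the standard compactly-generated localization package (Neeman–Thomason–Trobaugh style, as recalled in the paragraph defining localizing invariants), and the key input making it apply is that both $\Mod_A^{\mathrm{Nil}(S)}$ and $\Mod_{S^{-1}A}$ are compactly generated — the former by $M$ as just shown, the latter by $S^{-1}A$ by Proposition~\ref{prop:ore}. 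Once that framework is in place the fiber sequence follows formally from $\Kloc$ being a localizing invariant, and the Morita identification of the fiber term is the geometric heart of the statement.
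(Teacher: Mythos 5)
Your proposal follows the paper's proof essentially step for step: the localization sequence from Proposition~\ref{prop:ore}, passage to compact objects, reduction via Schwede--Shipley/Corollary~\ref{cor:kmor} to showing that $M$ compactly generates the $S$-nilpotent modules, and the generation argument by applying $\Map_A(-,N)$ to the defining cofiber sequence. The one place where you are less careful than the paper is at the crux: the equivalence you obtain on $N$ is \emph{left} multiplication by $r$, whereas $S$-nilpotence (and the localization $N[r^{-1}]=N\otimes_A S^{-1}A$) concerns \emph{right} multiplication, so the step ``$N\we N[r^{-1}]$, hence $N\we 0$'' needs the right Ore condition (requirement (1)) to convert right $r$-power-torsion of $\pi_*N$ into left $r$-power-torsion, which is exactly how the paper closes the argument.
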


\begin{proof}
    By Proposition~\ref{prop:ore}, there is a localization sequence
    \begin{equation*}
        \Mod_A^{\mathrm{Nil}(S)}\rightarrow\Mod_A\rightarrow\Mod_{S^{-1}A}.
    \end{equation*}
    It follows that after taking compact objects we obtain an exact sequence
    \begin{equation*}
        \Mod_A^{\mathrm{Nil}(S),\omega}\rightarrow\Mod_A^{\omega}\rightarrow\Mod_{S^{-1}A}^\omega
    \end{equation*}
    of small stable $\infty$-categories, where
    $\Mod_A^{\mathrm{Nil}(S),\omega}$ is equivalent to the stable $\infty$-category of
    $S$-nilpotent compact right $A$-modules. By definition of localizing $K$-theory and by
    Corollary~\ref{cor:kmor}, it suffices
    to show that $M$ is a compact generator of $\Mod_A^{\mathrm{Nil}(S)}$. Since $M$ is
    built in finitely many steps from $A$ by taking cofibers, it is an object of
    $\Mod_A^\omega$. Moreover, the localization of $M$ is clearly zero since $r$ is a
    unit in $S^{-1}A$.

    By~\cite{ha}*{Proposition~7.2.4.14},
    an $A$-module $N$ is $S$-local if and only if $\Map_A(A/s,N)\we 0$ for all
    $s\in S$, where $A/s$ denotes the cofiber of
    $\Sigma^{|s|}A\xrightarrow{s}A$. Under the
    hypothesis above, we can replace this condition by the condition that
    $\Map_A(A/r,N)\we 0$. With $M = A/r$, we will now argue that if $\Map_A(M,N)\we
    0$ and $N$ is $S$-nilpotent, then $N\we 0$. This will show that $M$ is a compact generator of
    $\Mod_A^{\mathrm{Nil}(S)}$. Consider the cofiber sequence
    $\Sigma^{d}A\xrightarrow{r} A\rightarrow M$ of right $A$-modules. Under the assumption that
    $\Map_A(M,N)\we 0$, the induced map
    \begin{equation*}
        N\we\Map_A(A,N)\xrightarrow{r}\Map_A(\Sigma^{d}A,N)\we\Sigma^{-d}N
    \end{equation*}
    of left $A$-modules is an equivalence. Since $N$ is $S$-nilpotent, requirement (1) of
    the Ore implies that every homogeneous element of $\pi_*N$ is annihilated by
    multiplication \emph{on the left} by a power of $r$. Hence, $N\we 0$.
\end{proof}

The localization at the heart of the Rognes' question is the map
$\BP{n}\rightarrow\Erm(n)$, where $\Erm(n)$ is obtained from $\BP{n}$ by inverting $v_n$.
Since $\pi_*\BP{n}$ is in fact a commutative ring, the right Ore condition is trivially satisfied. It
follows from the theorem that if we can compute the endomorphism ring of the cofiber of
$v_n\colon\Sigma^{2p^n-2}\BP{n}\rightarrow\BP{n}$, then we obtain an $\EE_1$-ring whose
$K$-theory can be analyzed and compared to the $K$-theory of $\BP{n-1}$. We will return to
the truncated Brown--Peterson spectra after introducing the computational techniques we will use.
 
We briefly discuss two different directions in which one can generalize the theorem.

First of all one can consider localizations $A\rightarrow S^{-1}A$ where $S$ is
generated by a left-regular sequence of homogeneous elements
$(r_1,\ldots,r_n)$. We give the statement for $n=2$.
Suppose that $\mathrm{deg}(r_i)=d_i$ for $i=1,2$ and suppose that $r_1$ and $r_2$ commute
up to homotopy when acting by multiplication on the left. That is, suppose that in
$\Ho(\Mod_A)$ the left-hand square in 
\begin{equation*}
    \xymatrix{
        \Sigma^{d_1+d_2}A\ar[r]^{r_1}\ar[d]^{r_2}   &
    \Sigma^{d_2}A\ar[d]^{r_2}\ar[r] & \Sigma^{d_2}M_1\ar[r]\ar@{.>}[d]^{r_2} & \Sigma^{d_1 + d_2 +1}A \ar[d]^{\Sigma r_2} \\
        \Sigma^{d_1}A\ar[r]^{r_1}                   &   A\ar[r]                         & M_1 \ar[r] & \Sigma^{d_1 +1}A
    }
\end{equation*}
commutes, where $M_1$ is
the cofiber of $r_1$ acting by multiplication on the left as a right $A$-module map. In this case, the triangulated category axioms assert that there is an extension
of this diagram to the right by a map $r_2\colon\Sigma^{d_2}M_1\rightarrow
M_1$. The map $r_2$ is
an example of a Toda bracket and need not be unique. We will say that
$(r_1,r_2)$ is a \emph{left-regular sequence} if the multiplicatively closed
set generated by $r_1$ and $r_2$ satisfies the right Ore condition, if $r_1$ acts injectively on the
homotopy of $A$, if the square above commutes, and
if for \emph{some}
choice of fill $r_2\colon\Sigma^{d_2}M_1\rightarrow M_1$ the map in homotopy is injective.
Note that a priori $(r_1,r_2)$ may be a left-regular sequence while $(r_2,r_1)$
is not.

\begin{proposition}
    Suppose that $A$ is an $\EE_1$-ring and that $S$ is the multiplicative set
    generated by a left-regular sequence $(r_1,r_2)$. Let $M_2$ be the cofiber
    of $\Sigma^{d_2}M_1\xrightarrow{r_2}M_1$ for some choice of map $r_2$ as
    above which is injective in homotopy. Then, there is a fiber sequence
    $$\Kloc(\End_A(M_2)^{\op})\rightarrow\Kloc(A)\rightarrow\Kloc(S^{-1}A)$$ of
    spectra.
\end{proposition}

\begin{proof}
    We prove that $M_2$ is a compact generator of the $S$-nilpotent spectra. Suppose
    that $N$ is $S$-nilpotent and that $\Map_A(M_2,N)\we 0$. As above one then sees that $r_2$
    induces an equivalence
    \begin{equation*}
        \Map_A(M_1,N)\rightarrow\Map_A(\Sigma^{d_2}M_1,N).
    \end{equation*}
    On the other hand, $r_2$ acts nilpotently on the homotopy groups of
    $N$. Consider the Ext spectral sequence
    \begin{equation*}
        \Eoh_2^{s,t}=\Ext^s_{\pi_*A}(\pi_*M_{1},\pi_*N)_t\Rightarrow\pi_{t-s}\Map_A(M_{1},N).
    \end{equation*}
    By the right Ore condition and the left-regularity of $r_1$, it follows that the right
    $\pi_*A$-module $\pi_*M_1$ has graded projective dimension $1$. Therefore, the filtration on
    the abutment is finite. But, $r_2$ must then act nilpotently on $\pi_*\Map_A(M_1,N)$, which
    implies that $\Map_A(M_1,N)\we 0$. The argument concludes as in the proof of the theorem.
\end{proof}

Appropriate similar hypothesis can be imposed on localizations of more elements to ensure the existence of
fiber sequences in $K$-theory with fiber given by the $K$-theory of an endomorphism
algebra.

In the second direction we can consider localizations of connective
$\EE_\infty$-rings or more generally of derived schemes (see~\cite{ag}).

\begin{theorem}
    Let $X$ be a quasi-compact and quasi-separated derived scheme and $U\subseteq X$ a
    quasi-compact Zariski open subscheme with complement $Z$. Then, there is a fiber sequence
    \begin{equation*}
        \Kloc(A)\rightarrow\Kloc(X)\rightarrow\Kloc(U),
    \end{equation*}
    where $\Kloc(A)\we\Kloc(\text{$X$ on $Z$})$ is the $K$-theory of an $\EE_1$-ring
    spectrum $A=\End_X(M)^{\op}$.
\end{theorem}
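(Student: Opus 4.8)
The plan is to reduce the statement about derived schemes to the Bökstedt–Neeman-style assertion that the kernel of $\Mod_X\rightarrow\Mod_U$ is generated by a single compact object, and then invoke the Morita machinery already developed in this section. First I would recall from~\cite{ag} that for a quasi-compact, quasi-separated derived scheme $X$ with quasi-compact open $U\subseteq X$ and closed complement $Z$, the localization $\Mod_X\rightarrow\Mod_U$ (restriction of quasi-coherent sheaves) is a Bousfield localization whose kernel $\Mod_{X}^{Z}$ — the full subcategory of sheaves supported on $Z$ — is a compactly generated presentable stable \icat admitting a single compact generator $M$. Concretely, $\Mod_X\rightarrow\Mod_U$ is a localization of presentable stable \icats in the sense discussed earlier: its right adjoint is fully faithful, and the kernel is generated by the objects killed by restriction. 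Passing to compact objects (which is exact because $\Mod_X\rightarrow\Mod_U$ is a localization with compactly generated kernel), one obtains an exact sequence
\begin{equation*}
    \Mod_X^{Z,\omega}\rightarrow\Mod_X^{\omega}\rightarrow\Mod_U^{\omega}
\end{equation*}
in $\Cat_\infty^\perf$, where $\Mod_X^{Z,\omega}$ is the \icat of perfect complexes on $X$ acyclic on $U$, i.e.\ what is denoted ``$X$ on $Z$.'' Applying the localizing invariant $\Kloc$ then yields the fiber sequence $\Kloc(X\text{ on }Z)\rightarrow\Kloc(X)\rightarrow\Kloc(U)$, which is the Thomason–Trobaugh sequence in the derived setting.

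Next I would identify the fiber term as the $K$-theory of an $\EE_1$-ring. Since $M$ is a compact generator of $\Mod_X^{Z}$, the Schwede–Shipley theorem quoted above applies: setting $A=\End_X(M)^{\op}$, one gets $\Mod_A\we\Mod_X^{Z}$ and hence $\Mod_A^{\omega}\we\Mod_X^{Z,\omega}$. By Corollary~\ref{cor:kmor} this gives $\Kloc(A)\we\Kloc(X\text{ on }Z)$, and splicing this equivalence into the fiber sequence above produces exactly the claimed statement. For the affine case $X=\Spec R$ with $U$ defined by inverting homogeneous elements $r_1,\ldots,r_n$, one checks that the generalized Koszul complex $\bigotimes_{i=1}^n\cofib(\Sigma^{d_i}R\xrightarrow{r_i}R)$ is compact (built from $R$ by finitely many cofiber sequences), is acyclic on $U$ (each factor becomes zero after inverting the corresponding $r_i$), and generates — the last point following by the same inductive nilpotence argument used in the proof of Theorem~\ref{thm:kfiber}, now iterated over the $n$ elements using that $R$ is $\EE_\infty$ (so the $r_i$ commute strictly and the Koszul complex is unambiguous).

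The one genuine input, and the step I expect to be the main obstacle, is the compact generation of the kernel $\Mod_X^{Z}$ by a single object in the non-affine case. Zariski-locally on $X$ the kernel is generated by a Koszul complex as above, but globalizing this to a single compact generator on all of $X$ requires gluing: one covers $X$ by finitely many affine opens, produces local Koszul-type generators, and assembles them — using that $X$ is quasi-compact and quasi-separated so that only finitely many pieces and finitely many ``overlap'' corrections are needed — into one compact object whose thick subcategory is the whole kernel. This is precisely the content of the relevant theorem of~\cite{ag} (itself a derived enhancement of Bökstedt–Neeman and Thomason–Trobaugh), so in the writeup I would simply cite it; but it is the part that is not formal and that does real work. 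Everything downstream — passing to compact objects, applying Schwede–Shipley, and applying the localizing invariant $\Kloc$ — is then a routine assembly of results already recorded in this section.
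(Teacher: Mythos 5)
Your proposal is correct and follows essentially the same route as the paper, which states this theorem as an immediate consequence of the single-compact-generator result of~\cite{ag} combined with the localization and Morita machinery (Schwede--Shipley and Corollary~\ref{cor:kmor}) already set up in this section. You correctly identify that the only non-formal input is the compact generation of the kernel of $\Mod_X\rightarrow\Mod_U$ by one object, which the paper likewise delegates entirely to~\cite{ag}.
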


\begin{proof}
    In this situation, Antieau and
    Gepner proved in~\cite{ag}*{Proposition 6.9} that there is a single compact
    object $M$ that generates the fiber of
    \begin{equation*}
        \Mod_X\rightarrow\Mod_U.
    \end{equation*}
    This completes the proof.
\end{proof}

Typically, $M$ can be taken to be a kind of generalized Koszul complex. For example, when
$X=\Spec R$ is affine, and when $U$ is the open set defined by inverting
$r_1,\ldots,r_n$ all of degree zero, then $M$ can be taken as
\begin{equation*}
    \bigotimes_{i=1}^n \cofib\left(R\xrightarrow{r_i}R\right),
\end{equation*}
which is precisely the Koszul complex when $R$ is an Eilenberg--MacLane spectrum and the
$r_i$ form a regular sequence.

\section{Hochschild homology and trace}\label{sec:trace}

Throughout this section $R$ will denote an $\EE_\infty$-ring.

\subsection{Hochschild homology}

\begin{definition}
    Let $A$ be an $\EE_1$-$R$-algebra and $M$ an $A$-bimodule in $\Mod_R$.
    The Hochschild homology $\HH^R(A,M)$ of $A$ over $R$ with coefficients in $M$ is the
    geometric realization of the cyclic bar construction $\Bcyc_\bullet(A/R,M)$,
    the simplicial spectrum with level $n$ the $R$-module
    \begin{equation*}
        \Bcyc_n(A/R,M)=M\otimes_R A^{\otimes n}.
    \end{equation*}
    The face and degeneracy maps are given by the same formulas as in ordinary Hochschild
    homology, see for instance~\cite{shipley-hochschild}*{Definition~4.1.2}. If $M=A$ with
    its natural bimodule structure then we typically write $\Bcyc_\bullet A$ for the
    simplicial spectrum and $\HH^R(A)$ for its geometric realization,
    omitting reference to $R$ when no confusion can occur. When
    $R=\Hrm\pi_0R$ is an Eilenberg--MacLane spectrum, our (implicit) use of the derived
    tensor product implies that this definition can disagree with ordinary Hochschild
    homology even when $M$ and $A$ are themselves Eilenberg--MacLane. It follows from the Tor spectral sequence that no discrepancy occurs
    if their homotopy groups are flat over $\pi_0R$.
\end{definition}

We will need the following Hochschild--Kostant--Rosenberg theorem for our later computations.
Our argument follows the proof of a similar theorem of McCarthy and
Minasian~\cite{mccarthy-minasian}*{Theorem~6.1}. Let $\CAlg_R$ denote the
$\infty$-category of $\EE_\infty$-$R$-algebras, and let
$\Sym_R\colon\Mod_R\rightarrow\CAlg_R$ denote the symmetric algebra functor, which
gives the free $\EE_\infty$-$R$-algebra on an $R$-module $M$. It is left
adjoint to the forgetful functor $\Mod_R\leftarrow\CAlg_R$. Note also that if
$R\rightarrow S$ is a map of $\EE_\infty$-rings, then there is a natural
equivalence $S\otimes_R\Sym_R(M)\we\Sym_S(S\otimes_R M)$.

\begin{theorem}\label{thm:symmetrichkr}
    Let $R$ be an $\EE_\infty$-ring, and let $M$ be an $R$-module. The
    Hochschild homology of the free $\EE_\infty$-$R$-algebra $S=\Sym_{R}M$ over $R$ is
    given by
    \begin{equation*}
        \HH^{R}(S)\we \Sym_S(S\otimes_{R}\Sigma M)
    \end{equation*}
    as an $\EE_\infty$-$S$-algebra, where $S\otimes_R\Sigma M$ is the tensor
    product of $R$-modules.
\end{theorem}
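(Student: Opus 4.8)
The plan is to reduce the statement to a computation with the cyclic bar construction using two structural facts about Hochschild homology of $\EE_\infty$-rings: that $\HH^R(-)$ sends (relative) tensor products of $\EE_\infty$-$R$-algebras to (relative) tensor products, and that $\HH^R(-)$ commutes with sifted colimits. The functor $S = \Sym_R M$ is the free $\EE_\infty$-$R$-algebra on $M$, so by adjunction $\HH^R$ of it should again be a free object, and the content of the theorem is to pin down on which module it is free.

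First I would record the monoidal behavior of $\HH^R$. For $\EE_\infty$-$R$-algebras $A$ and $B$ one has $\HH^R(A \otimes_R B) \we \HH^R(A) \otimes_R \HH^R(B)$, because the cyclic bar construction is built levelwise from relative tensor powers and $(A\otimes_R B)^{\otimes_R n} \we A^{\otimes_R n}\otimes_R B^{\otimes_R n}$, and geometric realization is symmetric monoidal; more conceptually, $\HH^R(A)\we A\otimes_{A\otimes_R A} A$ and this pushout construction is compatible with tensoring the inputs. Second, $\HH^R$ preserves sifted colimits, again since it is a geometric realization of functors built from $\otimes_R$, which preserves sifted colimits in each variable. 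Since $\Sym_R M = \bigoplus_{k\ge 0}\Sym_R^k M$ is a colimit of finite tensor powers of $M$ — and more to the point $\Sym_R$ itself, as a functor $\Mod_R \to \CAlg(\Mod_R)$, is a left adjoint and hence preserves all colimits — the case of a general $M$ reduces, via resolving $M$ by free $R$-modules (a sifted colimit of finite free modules) and these two compatibilities, to the case $M = R$, i.e. to computing $\HH^R(\Sym_R R) = \HH^R(R[t])$ where $R[t] = R\otimes_\SS \SS[t]$ is the free $\EE_\infty$-$R$-algebra on one generator in degree $0$.

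For that base case I would follow McCarthy--Minasian~\cite{mccarthy-minasian}*{Theorem~6.1}: identify $R[t]$ with $R\otimes_\SS \SS[\NN]$, reduce by base change along $\SS\to R$ to $\HH^\SS(\SS[\NN]) \we \SS[\Bcyc\NN]$, and observe that the cyclic bar construction of the commutative monoid $\NN$ is $\NN\times B\NN \we \NN\times S^1$ as a commutative monoid-space, whence $\HH^R(R[t])\we R[t]\otimes_\SS \SS[S^1]\we \Sym_R(R\oplus \Sigma R) = \Sym_{R[t]}(R[t]\otimes_R \Sigma R)$. This is exactly the asserted formula with $M=R$, as an $\EE_\infty$-$R[t]$-algebra. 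Propagating this identification back through the monoidal and sifted-colimit compatibilities — using $\HH^R(\Sym_R(M\oplus N))\we \HH^R(\Sym_R M)\otimes_R \HH^R(\Sym_R N)$ to handle direct sums of free modules, and naturality to pass to the colimit — gives $\HH^R(\Sym_R M)\we \Sym_S(S\otimes_R \Sigma M)$ for arbitrary $M$, compatibly with the $\EE_\infty$-$S$-algebra structures.

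The main obstacle is not any single computation but the bookkeeping of $\EE_\infty$-structures: one must check that every equivalence above is an equivalence of $\EE_\infty$-$R$-algebras (respectively of $\EE_\infty$-$S$-algebras on the nose, once $S$ is identified as $\HH^R(S)$'s degree-zero part), and that the identification is natural enough in $M$ to commute past the sifted colimit presenting a general module by frees. Concretely, the delicate point is that $\Sym_R$ is symmetric monoidal from $(\Mod_R,\oplus)$ to $(\CAlg(\Mod_R),\otimes_R)$ and that $\HH^R$ is lax symmetric monoidal and in fact sends $\otimes_R$ of $\EE_\infty$-$R$-algebras to $\otimes_R$; granting those two facts, which hold in Lurie's framework \cite{ha}, the argument is the formal reduction sketched above.
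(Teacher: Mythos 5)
Your structural reductions (monoidality of $\HH^R$ on $\EE_\infty$-algebras and preservation of sifted colimits) are fine, but the base case to which you reduce is both insufficient and incorrectly identified, and this is where the argument breaks. First, a general $R$-module is a sifted colimit of finite sums of \emph{shifts} $\Sigma^k R$, not of copies of $R$ itself, so even granting your reductions you would still need to compute $\HH^R(\Sym_R\Sigma^k R)$ for every $k\in\ZZ$; the monoid-ring trick only makes sense for a degree-zero generator. Second, and more seriously, the identification $\Sym_R R\simeq R[t]=R\otimes_\SS\SS[\NN]$ is false for a general $\EE_\infty$-ring $R$: one has $\Sym_R^k(R)=(R^{\otimes_R k})_{h\Sigma_k}\simeq R\otimes\Sigma^\infty_+B\Sigma_k$, so $\Sym_R R\simeq\bigoplus_k R\otimes\Sigma^\infty_+B\Sigma_k$, which agrees with the monoid ring $\bigoplus_k R$ only when the spaces $B\Sigma_k$ are $R$-acyclic (e.g.\ rationally). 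The free $\EE_\infty$-$R$-algebra on a degree-zero class is not the polynomial ring over the sphere, so your base-case computation returns the wrong spectrum for, say, $R=\SS$. (A smaller point: $\Bcyc\NN$ is $\mathrm{pt}\sqcup\coprod_{n\geq 1}S^1$ rather than $\NN\times B\NN$, though this would only affect the degree-zero component.)

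The paper's proof sidesteps all of this. By McClure--Schw\"anzl--Vogt, $\HH^R(S)\simeq S^1\otimes S$, the tensoring of the $\EE_\infty$-$R$-algebra $S$ with the space $S^1$; since $\Sym_R\colon\Mod_R\rightarrow\CAlg_R$ is a left adjoint, it commutes with tensoring by spaces, whence
\begin{equation*}
S^1\otimes\Sym_R M\we\Sym_R(S^1\otimes M)\we\Sym_R(M\oplus\Sigma M)\we\Sym_S(S\otimes_R\Sigma M),
\end{equation*}
uniformly in $M$ and with no resolution or base case needed. If you want to salvage your outline, replace the monoid-ring computation by this tensoring argument applied to $M=\Sigma^k R$ --- at which point the entire reduction becomes unnecessary.
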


\begin{proof}
    By McClure--Schw\"anzl--Vogt~\cite{mcclure-schwanzl-vogt}, there is an
    equivalence
    \begin{equation*}
        \HH^{R}(S)\we S^1\otimes_{R}S=S^1\otimes_{R}\Sym_{R}M,
    \end{equation*}
    where $S^1\otimes_R S$ refers to the simplicial structure on the category of $\EE_\infty$-$R$-algebras.
    As the symmetric algebra functor $\Sym_R\colon\Mod_R\rightarrow\CAlg_R$ is a left adjoint, it commutes with all
    all small colimits. Since the symmetric monoidal structure on $\CAlg_R$ is
    the cocartesian symmetric monoidal structure, $\Sym_R$ is symmetric
    monoidal for the cocartesian monoidal structure on $\Mod_R$. Thus, there is
    a natural equivalence
    $\Sym_R(M\oplus\Sigma M)\we\Sym_R(M)\otimes_R\Sym_R(\Sigma M)$. It follows that
    \begin{align*}
        S^1\otimes_{R}\Sym_{R}M&\we\Sym_{R}(S^1\otimes_{R}M)\\
        &\we\Sym_R(M\oplus\Sigma M)\\
        &\we\Sym_R(M)\otimes_R\Sym_R(\Sigma M)\\
        &\we\Sym_{\Sym_RM}(\Sym_RM\otimes_R\Sigma M),
    \end{align*}
    as desired.
\end{proof}

We will only apply this HKR-type theorem in the case where the base ring $R$ is $\Hrm\QQ$ and $M$ is
a compact $\Hrm\QQ$-module. In this case, the result is especially simple and
the reader can compare this result to the main result~\cite{hkr}.

\begin{corollary}\label{cor:qhkr}
    Let $M$ be a compact $\Hrm\QQ$-module with a basis for the even homology given by
    homogeneous elements $x_1,\ldots,x_m$ and for odd homology by homogeneous elements
    $y_1,\ldots,y_n$. Let $S=\Sym_{\Hrm\QQ}M$, so that
    $\pi_*S\iso\QQ[x_1,\ldots,x_m]\otimes\Lambda_\QQ\langle y_1,\ldots,y_n\rangle$. Then,
    \begin{equation*}
        \pi_*\HH^{\Hrm\QQ}(S)\iso\QQ[x_1,\ldots,x_m,\delta(y_1),\ldots,\delta(y_n)]\otimes\Lambda_\QQ\langle
        y_1,\ldots,y_n,\delta(x_1),\ldots,\delta(x_m)\rangle,
    \end{equation*}
    where the $\delta(y_i)$ and $\delta(x_i)$ are elements in degrees $|y_i|+1$
    and $|x_i|+1$, respectively, induced from a map $\Sigma
    M\rightarrow \HH^{\Hrm\QQ}(S)$.
\end{corollary}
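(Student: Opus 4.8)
The plan is to deduce the corollary formally from Theorem~\ref{thm:symmetrichkr}, the only real work being a computation of the homotopy groups of a free $\EE_\infty$-algebra on a free module over a rational base. First I would observe that since $M$ is compact with the stated homology it splits as $M\we\bigoplus_{i=1}^m\Sigma^{|x_i|}\Hrm\QQ\oplus\bigoplus_{j=1}^n\Sigma^{|y_j|}\Hrm\QQ$, so that $R=\Sym_{\Hrm\QQ}M$ and Theorem~\ref{thm:symmetrichkr} supplies an equivalence of $\EE_\infty$-$R$-algebras
\[
\HH^{\Hrm\QQ}(R)\we\Sym_R\left(R\otimes_{\Hrm\QQ}\Sigma M\right).
\]
Here $R\otimes_{\Hrm\QQ}\Sigma M\we\bigoplus_{i=1}^m\Sigma^{|x_i|+1}R\oplus\bigoplus_{j=1}^n\Sigma^{|y_j|+1}R$ is a free $R$-module; write $\delta(x_i)$ for the generator in the odd degree $|x_i|+1$ and $\delta(y_j)$ for the one in the even degree $|y_j|+1$.

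Next, since $\Sym_R$ is left adjoint to the forgetful functor from $\EE_\infty$-$R$-algebras to $R$-modules, it carries the direct sum above to the tensor product over $R$ of the algebras $\Sym_R(\Sigma^{|x_i|+1}R)$ and $\Sym_R(\Sigma^{|y_j|+1}R)$, one factor for each $i$ and each $j$. It therefore suffices to compute $\pi_*\Sym_R(\Sigma^d R)$ for $d$ even and $d$ odd and then assemble. This is where the hypothesis that $R$ is an algebra over $\Hrm\QQ$ enters: the $n$-th symmetric power $(\Sigma^d R)^{\otimes_R n}_{h\Sigma_n}$ has homotopy the $\Sigma_n$-coinvariants of $\pi_*\left((\Sigma^d R)^{\otimes_R n}\right)$, since the higher group homology of $\Sigma_n$ with rational coefficients vanishes. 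Because $R$ is commutative, $(\Sigma^d R)^{\otimes_R n}\we\Sigma^{nd}R$ with $\Sigma_n$ acting through the character $\sigma\mapsto\mathrm{sgn}(\sigma)^d$ coming from permuting the suspension coordinates. Hence for $d$ even one gets $\pi_*\Sym_R(\Sigma^d R)\iso\pi_*R[t]$ with $|t|=d$, and for $d$ odd one gets $\pi_*\Sym_R(\Sigma^d R)\iso\pi_*R\otimes\Lambda\langle t\rangle$ with $|t|=d$, the terms of rank $\geq 2$ dying because the $\Sigma_n$-coinvariants of the sign representation vanish rationally. Since each $\Sym_R(\Sigma^d R)$ is free as an $R$-module, the homotopy of the tensor product over $R$ of all of them is the graded tensor product of their homotopy groups, with no $\Tor$-terms.

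Assembling these computations, and recalling that $\pi_*R\iso\QQ[x_1,\ldots,x_m]\otimes\Lambda_\QQ\langle y_1,\ldots,y_n\rangle$, yields
\[
\pi_*\HH^{\Hrm\QQ}(R)\iso\QQ[x_1,\ldots,x_m,\delta(y_1),\ldots,\delta(y_n)]\otimes\Lambda_\QQ\langle y_1,\ldots,y_n,\delta(x_1),\ldots,\delta(x_m)\rangle,
\]
with $\delta(y_j)$ in degree $|y_j|+1$ and $\delta(x_i)$ in degree $|x_i|+1$, as claimed. The \emph{main obstacle} is the symmetric-power computation of the second paragraph: one must keep track of the Koszul signs so that the even shifts $\Sigma y_j$ produce polynomial generators and the odd shifts $\Sigma x_i$ produce exterior generators, and one must check that forming homotopy orbits commutes with the relevant colimits over $\Hrm\QQ$. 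Everything else is formal once Theorem~\ref{thm:symmetrichkr} is in hand.
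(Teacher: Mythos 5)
Your proposal is correct and follows essentially the route the paper intends: the corollary is stated as an immediate consequence of Theorem~\ref{thm:symmetrichkr}, with the only remaining content being the rational computation of $\pi_*\Sym_R$ of a free module, where even-degree generators give polynomial classes and odd-degree generators give exterior classes because the coinvariants of the sign representation vanish over $\QQ$. Your handling of the $\Sigma_n$-action via the character $\sigma\mapsto\mathrm{sgn}(\sigma)^d$ and the collapse of the homotopy-orbit spectral sequence is exactly the argument the authors use elsewhere for the analogous computation of $\pi_*\Sym_{\Hrm\QQ}M$.
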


\subsection{The trace map}

B\"okstedt introduced a natural transformation $\K\rightarrow\HH^\SS$ from
additive $K$-theory to Hochschild homology over the sphere called the
topological Dennis trace
map, generalizing earlier work of Waldhausen~\cite{waldhausen-2} in the special case of
$K$-theory of spaces. We will refer to this simply as the trace map.
Note that $\HH^\SS$ is precisely what is usually termed topological Hochschild homology
and denoted by $\THH$. This map and its refinements (such as topological cyclic homology)
play a central role in the contemporary approach to computations of $\K$-groups.
See Hesselholt and Madsen's work~\cite{hesselholt-madsen} for an exemplary case.

There is another approach which rests on a definition of $\HH^\SS$ for arbitrary small
spectral categories or stable $\infty$-categories. The idea goes back to Dundas and
McCarthy~\cite{dundas-mccarthy-hochschild}; see~\cite{blumberg-mandell-thh}*{Definition 3.4}
for the definition for spectral categories.
Blumberg, Gepner, and Tabuada show that this definition defines a
localizing invariant of small idempotent complete $\infty$-categories. It follows that
$\HH_0^\SS(\SS)$ is naturally bijective to the group of natural transformations
$\Kloc\rightarrow\HH^\SS$ of localizing invariants. They give a conceptual identification of
Waldhausen's trace, showing in~\cite{bgt1}*{Theorem~10.6} that it is the unique such natural
transformation with the property that the composition
\begin{equation*}
    \SS\rightarrow\Kloc(\SS)\rightarrow\HH^\SS(\SS)\rwe\SS
\end{equation*}
is homotopic to the identity, where $\SS\rightarrow\Kloc(\SS)$ is the unit map in $K$-theory, and
$\HH^\SS(\SS)\rwe\SS$ is the augmentation, i.e., the inverse to the unit map $\SS\rwe\HH^\SS(\SS)$ in Hochschild
homology.

\begin{definition}
    If $G$ is a grouplike $\EE_1$-algebra in spaces, we let $\Bcyc_\bullet G$ be the
    simplicial space with $\Bcyc_n=G^{n+1}$ and the usual face and degeneracy maps. Writing
    $\Bcyc G$ for the geometric realization of $\Bcyc_\bullet G$, we obtain by~\cite{jones}*{Theorem~6.2} a model for the free loop space
    $\Lrm\Brm G$ of $\Brm G$. There is a natural fibration sequence $G\rightarrow\Bcyc G\rightarrow\Brm
    G$, and there is a section on the right $\Brm G\rightarrow\Bcyc G$ given by including
    $\Brm G$ as the constant loops. This map in fact exists even before geometric
    realizations as a map $c\colon\Brm_\bullet G\rightarrow\Bcyc_\bullet G$ which on level $n$ is
    described by
    \begin{equation*}
        c(g_1,\ldots,g_n)=(g_n^{-1}\cdots g_1^{-1},g_1,\ldots,g_n).
    \end{equation*}
    See~\cite{loday}*{Proposition~7.3.4, Theorem~7.3.11}.
    This concrete cycle-level description will be important below.
\end{definition}

The following proposition is well-known and appears in similar forms in work of
B\"okstedt--Hsiang--Madsen~\cite{bokstedt-hsiang-madsen} and
Schlichtkrull~\cite{schlichtkrull-units}*{Section 4.4}.

\begin{proposition}
    Let $A$ be an $\EE_1$-ring. The composition
    $$\Sigma^\infty\BGL_1(A)_+\rightarrow\K(A)\rightarrow\Kloc(A)\rightarrow\HH^\SS(A)$$
    is equivalent to
    \begin{equation*}
        \Sigma^\infty(\BGL_1(A))_+\xrightarrow{c}\Sigma^\infty(\Bcyc\GL_1(A))_+\we\Bcyc\Sigma^\infty\GL_1(A)_+\rightarrow\Bcyc
        A=\HH^\SS(A),
    \end{equation*}
    where $\Bcyc\Sigma^\infty\GL_1(A)_+\rightarrow\Bcyc A$ is induced by the map of
    $\EE_1$-rings $\Sigma^\infty\GL_1(A)_+\rightarrow A$ adjoint to the inclusion of units.
\end{proposition}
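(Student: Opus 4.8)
The plan is to chase the definitions through three compatible identifications: the units map $\Sigma^\infty\BGL_1(A)_+\to\K(A)$, the cyclic bar model for $\HH^\SS$, and the Dennis trace. The key point is that every map in sight admits a cycle-level description, so the comparison can be carried out before any geometric realization or group completion.

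First I would unwind the trace map on the image of $\BGL_1(A)$. By Theorem~\ref{thm:groupcompletion}, the units map factors through $\Brm\Aut_A(A)\to\Omega^\infty\K(A)$, i.e. through the component of the free $A$-module $A$ of rank one; in terms of the $S_\bullet$-construction, a loop in $\BGL_1(A)=\Brm\GL_1(A)$ maps to the corresponding loop in the $1$-skeleton $\map([0],\Mod_A^\omega)^{\eq}$ at the vertex $A$. The Dennis trace, applied to this $1$-object subcategory whose endomorphism ring is $A$ itself, is computed by the cyclic nerve: this is precisely the content of the construction of the trace via the $S_\bullet$-construction and the Dundas--McCarthy description of $\HH^\SS$ for spectral categories (\cite{blumberg-mandell-thh}, \cite{bgt1}). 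Concretely, the trace restricted to the subcategory spanned by $A$ is the natural map $\Sigma^\infty(\Bcyc\GL_1(A))_+\to\Bcyc\Sigma^\infty\GL_1(A)_+\to\Bcyc A=\HH^\SS(A)$ induced by the $\EE_1$-ring map $\Sigma^\infty\GL_1(A)_+\to A$ adjoint to the inclusion of units, together with the standard cyclic bar model $\Bcyc G\simeq\Lrm\Brm G$ of \cite{jones}.

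Next I would identify the two ways of getting from $\BGL_1(A)$ into this cyclic object. On the one hand, the units map composed with the trace lands in $\HH^\SS(A)$ via $\Brm\GL_1(A)$; on the other, there is the explicit section $c\colon\Brm_\bullet\GL_1(A)\to\Bcyc_\bullet\GL_1(A)$, $c(g_1,\dots,g_n)=(g_n^{-1}\cdots g_1^{-1},g_1,\dots,g_n)$, of \cite{loday}. The claim is that these two maps $\Sigma^\infty\BGL_1(A)_+\to\HH^\SS(A)$ agree. This is exactly the assertion that the Dennis trace, when evaluated on an automorphism $g$ of $A$ (thought of as a $1$-simplex in $\eq S_1\Mod_A^\omega$ or a loop in the units space), is represented by $g$ itself sitting in the cyclic bar complex — which after normalizing is the cycle $c(g)$ — together with the observation that passing to the deloop $\BGL_1(A)$ glues these together compatibly. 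The cited references of B\"okstedt--Hsiang--Madsen~\cite{bokstedt-hsiang-madsen} and Schlichtkrull~\cite{schlichtkrull-units}*{Section 4.4} establish precisely this cycle-level compatibility for the topological Dennis trace and the unit map.

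The main obstacle I expect is bookkeeping: making the cycle-level identifications of the previous paragraph precise requires pinning down a specific model for the trace (e.g. via the $S_\bullet$-construction as in \cite{bgt1}*{Section 7, 10}) and checking that the simplicial map $c$ of \cite{loday} matches, on the nose, the simplicial structure maps coming from that model — including the twist by $g_n^{-1}\cdots g_1^{-1}$, which encodes exactly the cyclic-permutation face map that the trace of an automorphism picks out. Once one fixes such a model and invokes the rigidification remarks (every $\EE_1$-algebra in spaces is equivalent to a strictly associative monoid), the comparison is a diagram chase; I would organize it so that the equivalence $\Sigma^\infty(\Bcyc\GL_1(A))_+\simeq\Bcyc\Sigma^\infty\GL_1(A)_+$ and the functoriality of $\Bcyc$ in $\EE_1$-ring maps do most of the work, leaving only the identification of $c$ with the trace of an automorphism, which is the content of the cited literature.
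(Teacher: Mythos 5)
Your proposal follows essentially the same route as the paper: restrict the trace to the rank-one component of the units, model it there by the cyclic bar construction via the section $c$, and defer the cycle-level identification to B\"okstedt--Hsiang--Madsen and Schlichtkrull, with the group-completion description of $\Omega^\infty\K(A)$ and the $\Omega^\infty\dashv\Sigma^\infty_+$ adjunction doing the final assembly.

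There is one step you skip that the paper makes its very first move, and it matters because the proposition is asserted for an \emph{arbitrary} $\EE_1$-ring $A$: both Theorem~\ref{thm:groupcompletion} (the group-completion identification of $\coprod_P\Brm\Aut_A(P)$ with $\Omega^\infty\K(A)$) and the Waldhausen/B\"okstedt--Hsiang--Madsen model of the trace that you and the paper both lean on are only available for \emph{connective} rings. The paper handles this by first writing down the naturality square relating $\tau_{\geq 0}A$ and $A$ through $\K$, $\Kloc$, and $\HH^\SS$, and observing that $\Sigma^\infty\BGL_1(\tau_{\geq 0}A)_+\to\Sigma^\infty\BGL_1(A)_+$ is an equivalence (since $\GL_1$ only sees the connective cover); this reduces the claim to the connective case, where the cited cycle-level descriptions apply. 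As written, your appeal to Theorem~\ref{thm:groupcompletion} and to the BHM/Schlichtkrull comparison is not directly licensed for nonconnective $A$; you would either need to insert this reduction, or genuinely carry out the identification in the Dundas--McCarthy spectral-category model you gesture at, which is more work than a citation. With the connective reduction added, your argument matches the paper's.
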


\begin{proof}
    Given an $\EE_1$-ring $A$, there is a natural diagram of spectra
    \begin{equation*}
        \xymatrix{
            \Sigma^\infty\BGL_1(\tau_{\geq 0}A)_+\ar[r]\ar[d]^{\we}  &   \K(\tau_{\geq 0}A)\ar[r]\ar[d]  & \Kloc(\tau_{\geq 0}A)\ar[r]\ar[d]   &   \HH^\SS(\tau_{\geq 0}A)\ar[d]\\
            \Sigma^\infty\BGL_1(A)_+\ar[r]  &   \K(A)\ar[r]  & \Kloc(A)\ar[r]   &   \HH^\SS(A),\\
        }
    \end{equation*}
    where $\tau_{\geq 0}A$ denotes the connective cover of $A$. In particular, to compute the
    restriction of the trace $\Kloc(A)\rightarrow\HH^\SS(A)$ to the classifying space of the units of $A$,
    we can assume
    that $A$ is connective. For connective rings, Waldhausen defined the trace map in the
    following way (see~\cite{bokstedt-hsiang-madsen}*{Section 5}). For each finitely generated projective right $A$-module $P$, look at the map
    \begin{equation*}
        \Brm\Aut_A(P)\xrightarrow{c}\Bcyc\Aut_A(P)\rightarrow\Omega^\infty\Bcyc\End_A(P)\rightarrow \Omega^\infty\HH^{\SS}(A),
    \end{equation*}
    where $\End_A(P)$ is the endomorphism algebra spectrum of $P$, and
    where the right-hand map is the usual trace map in Hochschild homology (defined as
    in~\cite{weibel-homological}*{Section 9.5}
    or~\cite{blumberg-mandell-thh}*{Theorem~5.12}). The right-hand
    map is an equivalence if $P$ is a faithful projective right $A$-module by Morita theory. In any case,
    these maps are compatible with direct sum, and hence they induce a well-defined map
    \begin{equation*}
        \coprod_{\text{$P$ f.g. projective}}\Brm\Aut_A(P)\rightarrow\Omega^\infty\HH^\SS(A)
    \end{equation*}
    which is compatible with the direct sum structure on the left. Therefore, they induce a map
    from the group completion, which is equivalent to $\Omega^\infty\K(A)$ by Theorem~\ref{thm:groupcompletion}, to
    $\Omega^\infty\HH^\SS(A)$. Using the adjunction between $\Omega^\infty$ and
    $\Sigma^\infty_+$ completes the proof.
\end{proof}

We will use homotopy classes in $\BGL_1(A)$ and prove that they are
non-zero in $\HH^\SS(A)$. This will have the consequence that they map to non-zero
$K$-theory classes. More specifically, we will construct non-zero homotopy
classes in the $1$-skeleton of $\BGL_1(A)$, and we will need to know that they survive to
non-zero classes in $\HH^\SS(A)$. For this, we will need to understand the B\"okstedt spectral
sequence converging to $\HH^\SS$, namely
\begin{equation*}
    \Eoh^2_{s,t}=\Hoh_s(\Delta^{\op},\pi_t\Bcyc_{\bullet}A)\Rightarrow\HH_{s+t}^\SS(A).
\end{equation*}
This is nothing other than the homotopy colimit spectral sequence for the geometric
realization of a simplicial spectrum. Moreover, there are natural isomorphisms
\begin{equation*}
    \Hoh_s(\Delta^{\op},\pi_t\Bcyc_\bullet A)\iso\Hoh_s(C_*(\pi_t\Bcyc_\bullet A))
\end{equation*}
for all $t$, where $C_*(\pi_t\Bcyc_\bullet A)$ is the unnormalized chain complex associated
to the simplicial abelian group $\pi_t\Bcyc_\bullet A$.

In particular, we would like to understand the map
$\pi_t\Brm_\bullet\GL_1(A)\rightarrow\pi_t\Bcyc_\bullet A$ of simplicial abelian groups in
terms of elements of these groups. The next proposition provides the necessary formula, but
we need to note a couple of easy lemmas first.

\begin{lemma}\label{lem:m1}
    If $G$ is a grouplike $\EE_1$-space, then the inverse map acts as $-1$ on $\pi_tG$
    for $t\geq 1$.
\end{lemma}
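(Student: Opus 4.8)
The plan is to use the standard description of $\pi_t$ of a grouplike $\EE_1$-space as the $t$-th homotopy group of an infinite loop space, where the inversion map becomes the additive inverse. Concretely, I would first reduce to the case where $G$ is a grouplike topological monoid, using the fact recalled earlier that every $\EE_1$-algebra in spaces can be rigidified to an equivalent strictly associative monoid. Then, since $G$ is grouplike, the group completion $G \to \Omega\Brm G$ is an equivalence, so $\pi_t G \iso \pi_{t+1}\Brm G$ for $t \geq 1$; in particular $\pi_t G$ is an abelian group for $t \geq 1$ and its group structure agrees with the one coming from the infinite loop structure on $\Omega\Brm G$ (indeed on $\Omega^\infty$ of the connective spectrum associated to $G$).

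The key point is then to identify the inversion map $\iota\colon G \to G$, $g \mapsto g^{-1}$, under this equivalence. Under $G \we \Omega\Brm G$, the multiplication on $G$ corresponds (up to homotopy) to concatenation of loops, and $\iota$ corresponds to loop reversal. But loop reversal on $\Omega X$ induces multiplication by $-1$ on $\pi_t(\Omega X) = \pi_{t+1}(X)$ for all $t \geq 1$: this is the classical fact that the path-reversal involution on based loops is, on homotopy groups, the inverse for the group structure, hence $-\id$ since that group structure is (for $t\ge 1$) the same as the one used to define $\pi_t G$. Thus $\iota_* = -1$ on $\pi_t G$ for $t \geq 1$, as claimed.

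The main obstacle — really the only subtlety — is making sure that the two a priori different group structures on $\pi_t G$ agree: the one induced by the monoid multiplication $\mu$ on $G$, and the one induced by loop concatenation after identifying $G \we \Omega\Brm G$. This is precisely an Eckmann--Hilton argument: both structures are unital and the first distributes over the second (the multiplication map $\mu$ is a map of $\Brm G$-spaces, or one simply observes the two operations on $\pi_t$, $t\ge 1$, are mutually distributive), so by Eckmann--Hilton they coincide (and are abelian). Once this is in hand, the statement follows immediately because loop reversal is visibly an inverse for the concatenation structure, hence equals $-\id$ in the common group structure. I would phrase the whole argument in one short paragraph, citing Eckmann--Hilton and the standard identification of $\pi_t$ of a grouplike $\EE_1$-space with a stable homotopy group.
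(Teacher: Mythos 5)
Your proof is correct and rests on the same key point as the paper's, whose entire proof is the single sentence ``This follows from the Eckmann--Hilton argument.'' The detour through $G\we\Omega\Brm G$ and loop reversal is harmless but not needed: once Eckmann--Hilton identifies the multiplication-induced operation on $\pi_t G$ with the usual group structure, the relation $\mu(g,\iota(g))\we e$ immediately gives $x+\iota_*(x)=0$, i.e.\ $\iota_*=-1$.
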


\begin{proof}
    This follows from the Eckmann--Hilton argument.
\end{proof}

Given two based spaces $X$ and $Y$, consider the Tor spectral sequence
$$\Eoh_2^{s,t}=\Tor_s^{\pi_*\SS}(\pi_*\Sigma^\infty X_+,\pi_*\Sigma^\infty
Y_+)_t\Rightarrow\pi_{s+t}(\Sigma^\infty X_+\otimes\Sigma^\infty Y_+),$$
with differentials $d_r$ of bidegree $(-r,r-1)$.
When $s=0$, we have $\pi_*\Sigma^\infty X_+\otimes_{\pi_*\SS}\pi_*\Sigma^\infty
Y_+$, and this contributes to the lowest part of the filtration on the
abutment because every differential vanishes on $\Eoh_r^{0,t}$.
In particular, given $a\in\pi_tX$ and $b\in\pi_tY$, the classes $a\otimes 1$
and $1\otimes b$ determine homotopy classes in $\pi_t(\Sigma^\infty
X_+\otimes\Sigma^\infty Y_+)$.

\begin{lemma}\label{lem:m2}
    Let $X$ and $Y$ be based spaces. Then, the map
    \begin{equation*}
        \pi_t(X\times Y)\rightarrow\pi_t^s(X\times Y)_+\rightarrow\pi_t(\Sigma^\infty
        X_+\otimes\Sigma^\infty Y_+)
    \end{equation*}
    sends $(a,b)$ to $a\otimes 1+1\otimes b$.
\end{lemma}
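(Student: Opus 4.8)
The plan is to reduce the lemma to the elementary fact that, for $t\geq 1$, the fundamental class of $S^t$ is primitive for the diagonal comultiplication on $\Sigma^\infty_+S^t$.

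First I would pin down the maps. The second arrow in the statement is the canonical equivalence $\pi_t\Sigma^\infty(X\times Y)_+\riso\pi_t(\Sigma^\infty X_+\otimes\Sigma^\infty Y_+)$ coming from $(X\times Y)_+=X_+\wedge Y_+$ together with the fact that $\Sigma^\infty_+$ is (strong symmetric) monoidal, and the first arrow is the stabilization map, which sends a class $[\phi]=\phi_*(\iota_t)$ to $(\Sigma^\infty_+\phi)_*(u)$, where $\iota_t\in\pi_tS^t$ is the fundamental class and $u\in\pi_t\Sigma^\infty_+S^t$ its stabilization. Writing $(a,b)\in\pi_t(X\times Y)\iso\pi_tX\times\pi_tY$ as $[\phi]$ with $\phi=(f,g)$, so that $a=f_*\iota_t$ and $b=g_*\iota_t$, I would factor $\phi$ on the nose as $S^t\xrightarrow{\Delta}S^t\times S^t\xrightarrow{f\times g}X\times Y$ and apply $\Sigma^\infty_+$. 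Monoidality then identifies $(\Sigma^\infty_+\phi)_*(u)$ with the image of $u$ under
\begin{equation*}
    \pi_t\Sigma^\infty_+S^t\xrightarrow{(\Sigma^\infty_+\Delta)_*}\pi_t(\Sigma^\infty_+S^t\otimes\Sigma^\infty_+S^t)\xrightarrow{(\Sigma^\infty_+f\otimes\Sigma^\infty_+g)_*}\pi_t(\Sigma^\infty X_+\otimes\Sigma^\infty Y_+).
\end{equation*}
Since $\Sigma^\infty_+f$ and $\Sigma^\infty_+g$ are unital, once I know that $(\Sigma^\infty_+\Delta)_*(u)=u\otimes 1+1\otimes u$, naturality of stabilization identifies $(\Sigma^\infty_+f)_*(u)$ and $(\Sigma^\infty_+g)_*(u)$ with the images of $a$ and $b$ under $\pi_tX\rightarrow\pi_t\Sigma^\infty X_+$ and $\pi_tY\rightarrow\pi_t\Sigma^\infty Y_+$, and hence $(\Sigma^\infty_+\phi)_*(u)=a\otimes 1+1\otimes b$, which is exactly the assertion.

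It remains to prove primitivity of $u$. For $t\geq 1$ the based space $S^t$ is connected, so $\Sigma^\infty_+S^t\we\SS\oplus\Sigma^\infty S^t$, and therefore
\begin{equation*}
    \Sigma^\infty_+S^t\otimes\Sigma^\infty_+S^t\we\SS\oplus\Sigma^\infty S^t\oplus\Sigma^\infty S^t\oplus\Sigma^\infty(S^t\wedge S^t),
\end{equation*}
where the four summands are detected, respectively, by the collapse $S^t\times S^t\rightarrow\ast$, the two projections $\mathrm{pr}_1,\mathrm{pr}_2\colon S^t\times S^t\rightarrow S^t$, and the smash-collapse $q\colon S^t\times S^t\rightarrow S^t\wedge S^t$. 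I would then compute the four components of $(\Sigma^\infty_+\Delta)_*(u)$ one at a time: $\mathrm{pr}_i\circ\Delta=\id_{S^t}$ gives $u$ for the two middle components; the collapse composed with $\Delta$ factors through a point, so that component is $0$ (the $\SS$-component of $u$ is already $0$); and $q\circ\Delta$ is the reduced diagonal $S^t\rightarrow S^t\wedge S^t\we S^{2t}$, so the last component lies in $\pi_t^s(S^{2t})=\pi_{-t}\SS$, which vanishes because $t\geq 1$. Hence $(\Sigma^\infty_+\Delta)_*(u)=u\otimes 1+1\otimes u$, completing the argument.

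The only point requiring care is the bookkeeping around the stable splitting of $\Sigma^\infty_+(S^t\times S^t)$ and the precise meaning of $a\otimes 1$ and $1\otimes b$ as images of $a$ and $b$ under the unital maps $\Sigma^\infty X_+\rightarrow\Sigma^\infty X_+\otimes\Sigma^\infty Y_+\leftarrow\Sigma^\infty Y_+$; once that is set up, each component is a one-line naturality computation. Note also that the hypothesis $t\geq 1$ is genuinely needed, since for $t=0$ the reduced diagonal of $S^0$ is an equivalence rather than nullhomotopic and the formula fails, which is consistent with how the lemma gets used.
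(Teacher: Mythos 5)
Your proposal is correct, but it takes a more computational route than the paper, which disposes of the lemma in two lines: for $t\geq 1$ all the maps in question are group homomorphisms, the claim is obvious when $a=0$ or $b=0$ (since then the class factors through $X\times\{\ast\}$ or $\{\ast\}\times Y$, landing in the tensor factor against the unit), and $(a,b)=(a,0)+(0,b)$. Your argument instead factors the representing map through the diagonal of $S^t$ and proves that the stable fundamental class $u\in\pi_t\Sigma^\infty_+S^t$ is primitive, by splitting $\Sigma^\infty_+(S^t\times S^t)$ into four wedge summands and checking each component, the essential point being that the reduced diagonal $S^t\rightarrow S^t\wedge S^t$ is stably null for $t\geq 1$ because $\pi^s_{-t}=0$. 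Both arguments are sound and both genuinely require $t\geq 1$ (the paper's because additivity is used, yours because the reduced-diagonal component must vanish); you are right to flag that the formula fails for $t=0$, where the class of a point goes to $[x]\otimes[y]$. What your approach buys is the explicit primitivity statement for the diagonal, which is the conceptual reason the formula holds; what it costs is the bookkeeping around the stable splitting that the paper's linearity trick avoids entirely. If you want the shortest possible write-up, note that primitivity of $u$ itself follows from the same linearity trick applied to $\id_{S^t}=(\id,\id)$, so the two proofs are not independent in content, only in presentation.
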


\begin{proof}
    If either $a$ or $b$ is zero, this is obvious. But, $(a,b)=(a,0)+(0,b)$.
\end{proof}

Similar considerations apply in
the following proposition. For the sake of clarity, write $u$ for the natural map
$\GL_1(A)\rightarrow\Omega^\infty A$, as well as the corresponding map
$\pi_t\GL_1(A)\rightarrow\pi_t A$, which is an isomorphism for $t\geq 1$.
 
\begin{proposition}\label{prop:homotopymap}
    The composition
    \begin{equation*}
        \pi_t\Brm_n\GL_1(A)\rightarrow\pi_t\Bcyc_{n}\GL_1(A)\rightarrow\pi_t\Bcyc_{n}\Sigma^\infty\GL_1(A)_+\rightarrow\pi_t\Bcyc_{n}A
    \end{equation*}
    sends $(g_1,\ldots,g_n)$ to
    \begin{equation*}
        \left(1\otimes u(g_1)\otimes 1\otimes\cdots\otimes 1\right)+\cdots+\left(1\otimes\cdots\otimes 1\otimes
        u(g_n)\right)-\left(u(g_1+\cdots+g_n)\otimes 1\otimes\cdots\otimes 1\right)
    \end{equation*}
    for $t\geq 1$.
\end{proposition}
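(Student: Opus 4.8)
The plan is to trace the element $(g_1,\ldots,g_n)$ through each of the three maps in the composition, keeping careful track of the lowest filtration piece of the relevant Tor spectral sequences, exactly as the preceding lemmas license. First I would recall the concrete cycle-level description of the map $c\colon\Brm_\bullet G\to\Bcyc_\bullet G$ for $G=\GL_1(A)$: on level $n$ it sends $(g_1,\ldots,g_n)$ to $(g_n^{-1}\cdots g_1^{-1},g_1,\ldots,g_n)$. So on homotopy groups, applying Lemma~\ref{lem:m1} to compute the effect of the inverse and the $\EE_1$-multiplication on $\pi_t\GL_1(A)$ for $t\geq 1$, the class $(g_1,\ldots,g_n)\in\pi_t\Brm_n\GL_1(A)$ maps to the class in $\pi_t\Bcyc_n\GL_1(A)=\pi_t\GL_1(A)^{n+1}$ whose coordinates are $-(g_1+\cdots+g_n),g_1,\ldots,g_n$ — here I am using that $\pi_t$ of a product is the product of the $\pi_t$'s, and that on each homotopy group the $\EE_1$-structure is abelian and the inverse is $-1$.

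Next I would push this forward along $\Bcyc_n\GL_1(A)\to\Bcyc_n\Sigma^\infty\GL_1(A)_+$, which on level $n$ is the stabilization map $\GL_1(A)^{n+1}\to\Sigma^\infty(\GL_1(A)^{n+1})_+\to(\Sigma^\infty\GL_1(A)_+)^{\otimes(n+1)}$. Applying Lemma~\ref{lem:m2} inductively (to split off one tensor factor at a time), the tuple $(h_0,h_1,\ldots,h_n)\in\pi_t\GL_1(A)^{n+1}$ lands in the bottom filtration of the Tor spectral sequence as the sum $h_0\otimes 1\otimes\cdots\otimes 1 + 1\otimes h_1\otimes 1\otimes\cdots\otimes 1 + \cdots + 1\otimes\cdots\otimes 1\otimes h_n$. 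Substituting $h_0=-(g_1+\cdots+g_n)$ and $h_i=g_i$ gives precisely
\begin{equation*}
    \bigl(1\otimes g_1\otimes 1\otimes\cdots\otimes 1\bigr)+\cdots+\bigl(1\otimes\cdots\otimes 1\otimes g_n\bigr)-\bigl((g_1+\cdots+g_n)\otimes 1\otimes\cdots\otimes 1\bigr)
\end{equation*}
in $\pi_t\Bcyc_n\Sigma^\infty\GL_1(A)_+$. Finally, the map $\Bcyc_n\Sigma^\infty\GL_1(A)_+\to\Bcyc_n A$ is induced levelwise by the $\EE_1$-ring map $\Sigma^\infty\GL_1(A)_+\to A$ adjoint to the inclusion of units, which on $\pi_t$ (for $t\geq 1$) is the isomorphism $u\colon\pi_t\GL_1(A)\xrightarrow{\ \sim\ }\pi_t A$. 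Applying $u$ in each surviving tensor slot yields the claimed formula, using additivity of $u$ on homotopy groups to rewrite $u(g_1+\cdots+g_n)=u(g_1)+\cdots+u(g_n)$ if one prefers, though the stated form already records the unexpanded version.

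The main obstacle is bookkeeping rather than conceptual: one must be careful that Lemma~\ref{lem:m2} is really being applied in the bottom filtration of the Tor spectral sequence computing $\pi_*$ of an iterated smash product, and that the various simplicial identifications ($\pi_t$ of a product with the product of $\pi_t$'s, $\pi_t\Bcyc_n A = \pi_t(A^{\otimes(n+1)})$ in low filtration) are compatible with the simplicial structure maps — but since the proposition only asserts the formula on each fixed simplicial level $n$, no compatibility with faces and degeneracies needs to be checked here, and the argument reduces to the two lemmas plus the cycle-level formula for $c$. A minor point to get right is the sign: the single negative term comes entirely from Lemma~\ref{lem:m1} applied to the $g_n^{-1}\cdots g_1^{-1}$ coordinate of $c(g_1,\ldots,g_n)$, and it is worth remarking that this is exactly the sign feature that will matter when these classes are detected in $\HH^\SS$.
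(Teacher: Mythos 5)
Your proposal is correct and follows essentially the same route as the paper: Lemma~\ref{lem:m1} together with the cycle-level formula for $c$ handles the first map, an (iterated) application of Lemma~\ref{lem:m2} handles the stabilization, and the factorization of $u$ through $\Sigma^\infty\GL_1(A)_+$ handles the last map. Your write-up is somewhat more explicit than the paper's about the induction in Lemma~\ref{lem:m2} and about working in the bottom filtration of the Tor spectral sequence, but the argument is the same.
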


\begin{proof}
    The first map sends $(g_1,\ldots,g_n)$ to $(-g_1-\cdots-g_n,g_1,\ldots,g_n)$ by
    Lemma~\ref{lem:m1} and the description of the inclusion of the constant loops at the
    simplicial level. The rest of the description then follows from Lemma~\ref{lem:m2} and
    the fact that $u\colon\GL_1(A)\rightarrow\Omega^\infty A$ factors as
    $\GL_1(A)\rightarrow\Omega^\infty\Sigma^\infty\GL_1(A)_+\rightarrow\Omega^\infty A$.
\end{proof}

\section{K\"ahler differentials}\label{sec:calculations}

In this section we study the composition
\begin{equation*}
    \Tr\colon\BGL_1(A)\rightarrow\Omega^\infty\K(A)\rightarrow\Omega^\infty\HH^{\SS}(A)\rightarrow\Omega^\infty\HH^{\Hrm\QQ}(A_\QQ)
\end{equation*}
in the special case where $A_\QQ=\Hrm\QQ\otimes_\SS A$ admits an $\EE_\infty$-ring structure.
We can then assume by~\cite{shipley-hz}*{Theorem~1.2} that $A_\QQ$ is a rational commutative
differential graded algebra, which we do in this section.

Recall (for example from~\cite{loday}*{Paragraph~5.42} in the connective case) that for a rational commutative dga $A$ the dg module $\Omega_{A/\QQ}$ of K\"ahler
differentials can be defined as the free $A$-module generated by symbols $da$ for the
homogeneous elements $a$ of $A$ modulo the relations
\begin{equation*}
    d(ab)=ad(b)+(-1)^{|a||b|}bd(a),
\end{equation*}
where the map $d\colon A\rightarrow\Omega_{A/\QQ}$ is the universal derivation of $A$ over $\QQ$.
We should remark that by using the cotangent complex one may obtain more information, but
this is not necessary in the present paper.

\begin{proposition}\label{prop:unider}
    Suppose that $A$ is an $\EE_1$-ring such that $A_\QQ$ admits an $\EE_\infty$-ring
    structure (compatible with the $\EE_1$-ring structure). Then, there is a natural map
    \begin{equation*}
        D\colon\HH^{\Hrm\QQ}(A_\QQ)\rightarrow\Sigma\Omega_{A_\QQ/\QQ}
    \end{equation*}
    such that the composition
    \begin{equation*}
        D\circ\Tr\colon\pi_{*-1}\GL_1(A)\rightarrow\K_*(A)\rightarrow\HH^\SS_*(A)\rightarrow\HH^{\Hrm\QQ}_*(A_\QQ)\rightarrow\pi_{*-1}\Omega_{A_\QQ/\QQ}
    \end{equation*}
    agrees with the composition of rationalization $A\rightarrow A_\QQ$ and the
    universal derivation $d\colon A_\QQ\rightarrow\Omega_{A_{\QQ}/\QQ}$ in homotopy
    groups in positive degrees.
\end{proposition}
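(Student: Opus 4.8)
The plan is to construct the map $D$ as a composite of two natural transformations: first a Hochschild--Kostant--Rosenberg-type truncation $\HH^{\Hrm\QQ}(A_\QQ) \to \HH^{\Hrm\QQ}(A_\QQ)/F_2 \we A_\QQ \oplus \Sigma\Omega_{A_\QQ/\QQ}$ onto the bottom two pieces of the HKR filtration, and then projection onto the $\Sigma\Omega_{A_\QQ/\QQ}$ summand. Concretely, after using \cite{shipley-hz}*{Theorem~1.2} to replace $A_\QQ$ by a commutative dga, $\HH^{\Hrm\QQ}(A_\QQ)$ is computed by the cyclic bar complex, and the classical HKR filtration has associated graded pieces $\Sigma^i\Omega^i_{A_\QQ/\QQ}$; the quotient by the filtration level $\geq 2$ gives a natural map to $\Sigma\Omega^1_{A_\QQ/\QQ}$ in the relevant piece. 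This $D$ is manifestly natural in $A_\QQ$ as a commutative dga, hence natural in $A$.

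The second and main step is to compute $D \circ \Tr$ on homotopy groups in positive degrees. Here I would use Proposition~\ref{prop:homotopymap}, which gives an explicit cycle-level formula for the composite $\pi_t\Brm_n\GL_1(A) \to \pi_t\Bcyc_n A$: the class $(g_1,\dots,g_n)$ goes to $\sum_j (1\otimes\cdots\otimes u(g_j)\otimes\cdots\otimes 1) - (u(g_1+\cdots+g_n)\otimes 1\otimes\cdots\otimes 1)$. For a single homotopy class $a \in \pi_t\GL_1(A) \iso \pi_t A$ (for $t\geq 1$) represented in the $1$-skeleton $\Brm_1\GL_1(A)$, this becomes $1\otimes u(a) - u(a)\otimes 1$ in $\pi_t\Bcyc_1 A = \pi_t(A\otimes_\SS A)$, i.e. precisely the element $1\otimes a - a\otimes 1$ sitting in filtration $1$ of the bar complex for $\THH$. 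Rationalizing and passing to $\HH^{\Hrm\QQ}$, this element lives in the filtration-$1$ piece of the HKR filtration, and under the projection $D$ onto $\Sigma\Omega^1_{A_\QQ/\QQ}$ the class $1\otimes a - a\otimes 1$ maps to $da$. This is exactly the statement that $D\circ\Tr$ agrees with the universal derivation in positive degrees. One must also check compatibility: that the homotopy class of $a$ in the $1$-skeleton of $\BGL_1(A)$ really does map via the units map to the $K$-theory class whose trace is computed by the proposition; this is built into the construction of the units map and the content of Proposition~\ref{prop:homotopymap}.

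The main obstacle is making sure the HKR filtration and its truncation behave correctly in the derived/spectral setting and interact well with the B\"okstedt spectral sequence. Specifically, one needs that the element $1\otimes a - a\otimes 1 \in \pi_t \Bcyc_1 A$, which a priori lives in the bottom filtration of the B\"okstedt spectral sequence $\Eoh^2_{s,t} = \Hoh_s(\Delta^\op, \pi_t\Bcyc_\bullet A) \Rightarrow \HH^\SS_{s+t}(A)$, survives to a nonzero class in the associated graded of $\HH^{\Hrm\QQ}$ and is detected with the correct name by $D$. Since we are working rationally and $A_\QQ$ is a commutative dga, the HKR decomposition $\pi_*\HH^{\Hrm\QQ}(A_\QQ) \iso \bigoplus_i \pi_*(\Sigma^i \Omega^i_{A_\QQ/\QQ})$ holds on the nose (there are no convergence issues over $\QQ$), so the only real work is tracking a $1$-cycle through the identification $\HH^{\Hrm\QQ}_1$-part $\we \Omega^1_{A_\QQ/\QQ}[1]$ and confirming the sign and normalization so that $1\otimes a - a\otimes 1 \mapsto da$ rather than $-da$ or a multiple. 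That bookkeeping is routine but is where care is needed; everything else follows formally from naturality of the trace and Proposition~\ref{prop:homotopymap}.
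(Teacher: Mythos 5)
Your proposal is correct and follows essentially the same route as the paper: the paper's map $D$ is precisely the chain-level incarnation of your HKR-weight-one projection (it sends $a_0\otimes a_1\mapsto a_0\,d(a_1)$ on the degree-one term of the Hochschild complex and is extended by zero, using graded-commutativity of $A_\QQ$ to see this is well defined on homology), and the computation of $D\circ\Tr$ via the cycle-level formula $\Tr(a)=-a\otimes 1+1\otimes a$ from Proposition~\ref{prop:homotopymap} is identical. The survival/filtration bookkeeping you flag as the main obstacle is not actually needed for this proposition, since the paper composes $D$ with $\Tr$ directly at the simplicial chain level; that issue is instead dealt with in Corollary~\ref{cor:thhderivation}.
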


\begin{proof}
    The
    Hochschild complex
    \begin{equation*}
        \cdots\rightarrow A^{\otimes 3}_\QQ\xrightarrow{\partial_0-\partial_1+\partial_2} A^{\otimes
        2}_\QQ\xrightarrow{\partial_0-\partial_1} A_\QQ\rightarrow 0
    \end{equation*}
    can be considered as a chain complex in chain complexes (or a double complex by changing
    the vertical differentials using the vertical sign trick). Since $A_\QQ$ is graded-commutative,
    $\partial_0-\partial_1=0$. Similarly,
    \begin{align*}
        (\partial_0-\partial_1+\partial_2)(a_0\otimes a_1\otimes a_2)&=a_0a_1\otimes a_2-a_0\otimes a_1a_2+(-1)^{|a_2|(|a_0|+|a_1|)}a_2a_0\otimes a_1\\
        &=a_0\left(a_1\otimes a_2-1\otimes a_1a_2+(-1)^{|a_2||a_1|}a_2\otimes a_1\right),
    \end{align*}
    where the sign conventions are the usual ones for the Hochschild complex of a
    differential graded algebra. See the material in~\cite{weibel}*{Section~9.9.1} on cyclic
    homology.

    Hence, the map $D\colon A_\QQ^{\otimes 2}\rightarrow\Omega_{A_\QQ/\QQ}$ defined by
    \begin{equation*}
        D(a_0\otimes a_1)=a_0d(a_1)
    \end{equation*}
    defines a quasi-isomorphism
    \begin{equation*}
        A_\QQ^{\otimes
        2}/\im\left(\partial_0-\partial_1+\partial_2\right)\rightarrow\Omega_{A_\QQ/\QQ},
    \end{equation*}
    and we write $D$ also for the prolongation by zero to the entire Hochschild complex.

    Since $\Tr$ is defined at the simplicial level, we look at
    \begin{equation*}
        \Tr\colon\pi_t\Brm_1\GL_1(A)\rightarrow\pi_t\Bcyc_1 A,
    \end{equation*}
    which is given by
    \begin{equation*}
        \Tr(a)=-a\otimes 1+1\otimes a
    \end{equation*}
    by Proposition~\ref{prop:homotopymap}. Applying $D$, we get
    \begin{equation*}
        D\circ\Tr(a)=d(a)
    \end{equation*}
    since $d(1)=0$.
\end{proof}

Write $\delta\colon\Sigma A\rightarrow\Sigma\Omega_{A/\QQ}$ for the suspension of $d$.

\begin{corollary}\label{cor:thhderivation}
    Let $M$ be a compact $\Hrm\QQ$-module, and let $A=\Sym_{\Hrm\QQ}M$.
    There exists a section
    \begin{equation*}
        s\colon\Sigma\Omega_{A/\QQ}\rightarrow\HH^{\Hrm\QQ}(A)
    \end{equation*}
    such that $\Tr$ factors at the level of homotopy groups as in the following commutative diagram:
    \begin{equation*}
        \xymatrix{
            \pi_{*-1}\GL_1(A)\ar[r]\ar[ddr]_{\Tr} &   \pi_{*-1} A\ar[d]^\delta\\
            &   \pi_{*-1}\Omega_{A/\QQ}\ar[d]^s\\
            &   \HH^{\Hrm\QQ}_*(A).
        }
    \end{equation*}
\end{corollary}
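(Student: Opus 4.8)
The plan is to deduce this from Proposition~\ref{prop:unider} together with Corollary~\ref{cor:qhkr} and the HKR-type description of $\HH^{\Hrm\QQ}(A)$ for a symmetric algebra $A = \Sym_{\Hrm\QQ}M$. The essential point is that for such an $A$, Theorem~\ref{thm:symmetrichkr} gives $\HH^{\Hrm\QQ}(A) \we \Sym_A(A\otimes_{\Hrm\QQ}\Sigma M)$ as an $\EE_\infty$-$A$-algebra, and in this situation the module $\Omega_{A/\QQ}$ of K\"ahler differentials is simply the free $A$-module on $\Sigma M$; indeed for a polynomial-exterior algebra on generators $x_i$ (even) and $y_j$ (odd), $\Omega_{A/\QQ}$ is free on the $dx_i$ and $dy_j$. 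Thus there is a canonical identification $\Sigma\Omega_{A/\QQ} \we A\otimes_{\Hrm\QQ}\Sigma M$, which sits inside $\HH^{\Hrm\QQ}(A)$ as the summand of Hochschild-degree $\le 1$ (weight one) part — this is the desired section $s$.

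More concretely, I would proceed as follows. First, observe that the map $D\colon\HH^{\Hrm\QQ}(A_\QQ)\rightarrow\Sigma\Omega_{A/\QQ}$ constructed in the proof of Proposition~\ref{prop:unider} arises, on the cyclic bar complex, from $D(a_0\otimes a_1) = a_0\,d(a_1)$, extended by zero in higher simplicial degrees; it is a quasi-isomorphism from the Hochschild-weight-$\le 1$ quotient onto $\Sigma\Omega_{A/\QQ}$. Second, for $A = \Sym_{\Hrm\QQ}M$ this weight decomposition of $\HH^{\Hrm\QQ}(A)$ is split: the $S^1$-simplicial structure on $\EE_\infty$-$\Hrm\QQ$-algebras is compatible with the grading by powers of $M$, so $\HH^{\Hrm\QQ}(A) \we \bigoplus_{k\ge 0}\Sym^k_A(A\otimes\Sigma M)$ and the weight-one piece $A\otimes_{\Hrm\QQ}\Sigma M$ is a natural retract. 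Define $s$ to be the inclusion of this retract, precomposed with the canonical identification $\Sigma\Omega_{A/\QQ}\we A\otimes_{\Hrm\QQ}\Sigma M$; then by construction $D\circ s \simeq \id$. Third, since $\Tr$ lands in the weight-$\le 1$ part at the simplicial level — by Proposition~\ref{prop:homotopymap}, $\Tr(a) = -a\otimes 1 + 1\otimes a$ already lives in $\Bcyc_1$ — its image in $\HH^{\Hrm\QQ}_*(A)$ factors through the weight-one summand in positive degrees, i.e.\ through $s$. Finally, by Proposition~\ref{prop:unider}, $D\circ\Tr$ agrees with $\delta$ in positive degrees, and combining this with $D\circ s\simeq\id$ on the weight-one piece gives the claimed commuting triangle $\Tr = s\circ\delta$ in positive-degree homotopy groups, and identifies the generators $\delta(x_i),\delta(y_j)$ of Corollary~\ref{cor:qhkr} with the suspensions of $d(x_i),d(y_j)$.

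The main obstacle, I expect, is making rigorous the claim that $\Tr$ factors through the weight-one summand as a \emph{map}, rather than merely that its effect on the specific homotopy classes we care about lands there. At the level of the cyclic bar construction this is clear cycle-by-cycle, but one must check that passing to geometric realization and to homotopy groups does not mix in contributions from higher Hochschild weight via the B\"okstedt (homotopy-colimit) spectral sequence — in other words, that the filtration by simplicial degree is compatible with the weight splitting in a way that survives the spectral sequence. For $A = \Sym_{\Hrm\QQ}M$ with $M$ compact this is harmless because the weight grading is an honest direct-sum decomposition of $\EE_\infty$-$\Hrm\QQ$-algebras, so $\HH^{\Hrm\QQ}(A)$ splits accordingly and one can argue summand by summand; but if one wanted the statement only in positive degrees on the nose (which is all that is claimed), it suffices to note that $\Tr$ of a class in $\pi_{*-1}\GL_1(A)$ is represented already in $\Bcyc_1$ and apply $D$ directly, as in the proof of Proposition~\ref{prop:unider}, so that the only content beyond that proposition is the existence of the splitting $s$ with $D\circ s\simeq\id$, which follows from the explicit HKR description in Corollary~\ref{cor:qhkr}.
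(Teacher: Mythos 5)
Your overall strategy is the same as the paper's: construct a section $s$ of the map $D$ from Proposition~\ref{prop:unider}, invoke $D\circ\Tr=\delta$, and then argue that $\Tr$ actually lands in the image of $s$ (rather than merely agreeing with $s\circ\delta$ after applying $D$). Your construction of $s$ as the inclusion of the weight-one summand $A\otimes_{\Hrm\QQ}\Sigma M$ of $\Sym_A(A\otimes_{\Hrm\QQ}\Sigma M)$ is cleaner than the paper's, which simply picks an arbitrary section of the surjection $\HH^{\Hrm\QQ}_*(A)\rightarrow\pi_*\Sigma\Omega_{A/\QQ}$ on generators. You also correctly identify the crux: one must show $\im(\Tr)\subseteq\im(s)$, not just that $D$ kills the difference.

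There is, however, a gap in your closing reduction. Knowing that $\Tr(a)$ is represented by the $1$-chain $-a\otimes 1+1\otimes a$ in $\Bcyc_1A$ places its image in the simplicial filtration $F_1$, which under the weight splitting is the sum of the weight-\emph{zero} piece $\pi_{*}A$ and the weight-one piece $\pi_{*-1}\Omega_{A/\QQ}$. Your argument excludes weight $\geq 2$ but says nothing about a possible weight-zero component, and this component lies in $\ker(D)$ (since $D$ vanishes on $\Bcyc_0$), so the identities $D\circ\Tr=\delta$ and $D\circ s=\id$ cannot detect it. Thus ``the only content beyond Proposition~\ref{prop:unider} is the existence of $s$ with $D\circ s\simeq\id$'' is not quite right: one must also kill the $F_0$-contribution. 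The paper does this by comparing the homotopy-colimit (B\"okstedt) spectral sequences for $\Brm_\bullet\GL_1(A)$ and $\Bcyc_\bullet A$ along the simplicial map $c$: since $\Brm_0\GL_1(A)$ is a point, the source $\Eoh^2$-page is concentrated on the line $s=1$ (both $\Eoh^2_{0,t}$ and $\Eoh^2_{s,t}$ for $s\geq 2$ vanish), so the image of $\Tr$ lands in $\Eoh^\infty_{1,t}\iso\pi_t\Omega_{A/\QQ}$, which is exactly $\im(s)$. Adding this one comparison (or any other argument that the composite of $\Tr$ with the weight-zero projection $\HH^{\Hrm\QQ}(A)\rightarrow A$ vanishes in positive degrees) would complete your proof.
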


\begin{proof}
    We can and do assume that $M$ has zero differential, so that $A$ is a formal rational
    dga. Then, $\Omega_{A/\QQ}$ is a formal $A$-module, equivalent to $A\otimes_{\Hrm\QQ}
    M$. In particular, to give the map $s$, we just have to specify where to map generators of the
    homology of $M$. Since the map $\pi_*D\colon \HH^{\Hrm\QQ}_*(A)\rightarrow\pi_*\Sigma\Omega_{A/\QQ}$
    is surjective, such a section $s$ exists, and we simply pick one.

    Note that by our choice of $s$ and Proposition~\ref{prop:unider},
    the maps $D\circ\Tr$ and $D\circ s\circ\delta$ do agree
    at the level of homotopy groups. It follows that to prove the corollary we need to show
    that the image of $\Tr$ is contained in the image of $s$.

    Consider the B\"okstedt spectral sequence computing $\HH^{\Hrm\QQ}(A)$. There is a map
    of the homotopy colimit spectral sequences for $\BGL_1(A)$ and $\HH^{\Hrm\QQ}(A)$ (the
    B\"okstedt spectral sequence) induced by $\Tr$, which on $\Eoh_2$ terms is given by:
    \begin{equation*}
        \Hoh_s(C_*(\pi_t \Brm_\bullet\GL_1(A)))\rightarrow\Hoh_s(C_*(\pi_t\Bcyc_\bullet A)).
    \end{equation*}
    The left-hand side is concentrated in the terms $\Eoh^2_{1,t}$, and it follows
    that the image of $\Tr$ is contained in $\Eoh^{\infty}_{1,t}$. But, the proof of
    Proposition~\ref{prop:unider} implies that
    $$\Eoh^2_{1,t}=\Eoh^{\infty}_{1,t}\iso\pi_{t}\Omega_{A/\QQ}.$$
    Therefore, $\Tr$ factors through the image of $s$, as desired.
\end{proof}

It follows that the classes $\delta(x_i)$ and $\delta(y_i)$ appearing in
Corollary~\ref{cor:qhkr} can be chosen to be the suspensions of $d(x_i)$ and $d(y_i)$.

\section{The truncated Brown--Peterson spectra as algebras}\label{sec:tbps}

Fix a prime $p$. Let $\BPP$ denote the $\EE_4$-ring constructed by Basterra and
Mandell~\cite{basterra-mandell}*{Theorem~1.1} as an $\EE_4$-algebra summand of
$\MU_{(p)}$, the $p$-local complex cobordism spectrum. The homotopy ring of $\BPP$ is
\begin{equation*}
    \pi_*\BPP=\ZZ_{(p)}[v_1,v_2,\ldots],
\end{equation*}
where $v_i$ has degree $2p^i-2$. By convention, we set $v_0=p\in\pi_0\BPP$. We will work everywhere with
$\EE_1$-algebras over $\BPP$. 

\begin{lemma}
    For any $v_i$, $\BPP/v_i$ admits $\EE_1$-algebra structures over $\BPP$.
    \begin{proof}
        By Strickland~\cite{strickland}*{Corollary 3.3}, there exist products on $\BPP/v_i$.
        Moreover, by~\cite{strickland}*{Proposition 3.1}, these are homotopy associative. It follows that we
        get $A_3$ structures on $\BPP/v_i$. By
        Angeltveit~\cite{angeltveit}*{Corollary 3.7}, which Angeltveit remarks
        in the introduction to that section applies equally well over any
        $\EE_\infty$-ring, these
        extend to $A_\infty$-structures over $\MU_{(p)}$. Giving an $A_\infty$-structure over
        $\MU_{(p)}$ is equivalent to making $\BPP/v_i$ an $\EE_1$-ring over
        $\MU_{(p)}$.
        Using the $\EE_4$-ring map $\BPP\rightarrow\MU_{(p)}$
        of~\cite{basterra-mandell}*{Section 5}, we obtain $\EE_1$-algebra
        structures on $\BPP/v_i$ over $\BPP$ by restriction.
    \end{proof}
\end{lemma}
In particular, since $v_i$ is not a zero-divisor and by the lemma, the cofiber $\BPP/v_i$ has
the expected homotopy ring, namely
\begin{equation*}
    \pi_*\BPP/v_i\iso\ZZ_{(p)}[v_1,\ldots,\widehat{v_i},\ldots],
\end{equation*}
the quotient of $\pi_*\BPP$ by the ideal generated by $v_i$.

We define $\BP{n}$ as the iterated cofiber
\begin{equation*}
    \colim_{i>n}\BPP/v_{n+1}\otimes_{\BPP}\cdots\otimes_{\BPP}\BPP/v_i.
\end{equation*}

\begin{lemma}
    The truncated Brown--Peterson spectra $\BP{n}$ admit $\EE_1$-algebra structures over
    $\BPP$.
    \begin{proof}
        Since the forgetful functor $\Alg_{\BPP}\rightarrow\Mod_{\BPP}$ preserves filtered
        colimits, the underlying $\BPP$-module of the colimit
        $\colim_{i>n}\BPP/v_{n+1}\otimes_{\BPP}\cdots\otimes_{\BPP}\BPP/v_i$ in $\Alg_{\BPP}$ is $\BP{n}$.
    \end{proof}
\end{lemma}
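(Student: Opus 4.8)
The plan is to build the $\EE_1$-algebra structures on the maps $\BP{n}\rightarrow\BP{n-1}$ by realizing both spectra as filtered colimits in $\Alg_{\BPP}$ and producing the maps between the colimit systems termwise. First I would fix, once and for all, an $\EE_1$-$\BPP$-algebra structure on each quotient $\BPP/v_i$ for $i\geq 1$; these exist by the previous lemma (Strickland and Angeltveit). For each pair of integers $m\leq k$, form the finite tensor product $Q_{m,k}=\BPP/v_{m+1}\otimes_{\BPP}\cdots\otimes_{\BPP}\BPP/v_k$ in $\Alg_{\BPP}$, which makes sense because $\Mod_{\BPP}$ is $\EE_3$-monoidal and hence $\Alg_{\BPP}$ is symmetric monoidal; thus $\BP{n}\we\colim_{k>n}Q_{n,k}$ as an $\EE_1$-$\BPP$-algebra.

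Next I would organize these into a diagram. For fixed $n$, the system $k\mapsto Q_{n,k}$ has structure maps $Q_{n,k}\rightarrow Q_{n,k+1}$ given by tensoring with the unit $\BPP\rightarrow\BPP/v_{k+1}$, which are $\EE_1$-$\BPP$-algebra maps. For the comparison between consecutive truncations, note that $Q_{n,k}=\BPP/v_{n+1}\otimes_{\BPP}Q_{n+1,k}$ for $k>n+1$, so smashing the unit map $\BPP\rightarrow\BPP/v_{n+1}$ with $\id_{Q_{n+1,k}}$ gives an $\EE_1$-$\BPP$-algebra map $Q_{n+1,k}\rightarrow Q_{n,k}$, compatible with the structure maps in $k$. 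Taking the colimit over $k$ then yields the desired $\EE_1$-$\BPP$-algebra map $\BP{n}\rightarrow\BP{n-1}$ (after reindexing), and the forgetful functor $\Alg_{\BPP}\rightarrow\Mod_{\BPP}$ preserves filtered colimits, so this map has the expected underlying $\BPP$-module map. The homotopy-ring computation $\pi_*\BP{n}\iso\ZZ_{(p)}[v_1,\ldots,v_n]$ follows from the flat base-change/Tor spectral sequence for each tensor factor, using that $v_i$ is a non-zero-divisor, exactly as in the $\BPP/v_i$ case already recorded.

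The main obstacle I expect is verifying that the underlying map of $\BPP$-modules really is the canonical ``obvious'' map $\BP{n}\rightarrow\BP{n-1}$ collapsing the top $v_n$-tower, rather than some a priori different self-map inducing the same surjection on homotopy. One way to pin this down is to check on homotopy groups: both the constructed map and the obvious one are maps of $\ZZ_{(p)}[v_1,\ldots,v_n]$-modules (via the $\BPP$-algebra structure) that are the identity on the $v_i$ for $i<n$ and kill $v_n$, hence agree as ring maps $\pi_*\BP{n}\rightarrow\pi_*\BP{n-1}$; since $\BP{n-1}$ is determined up to equivalence by its homotopy ring in the relevant derived sense, this is enough for the applications in the paper. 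A cleaner alternative is to observe that $\BPP/v_{n+1}\otimes_{\BPP}(-)$ applied to the unit $\BPP\rightarrow\BPP/v_n$-type maps already exhibits, on underlying modules, precisely the Koszul-quotient maps, and that filtered colimits commute with these tensor products; I would present whichever of these is shorter. In either case the key point is that all the maps in sight have been built from the single choice of $\EE_1$-structure on each $\BPP/v_i$ together with the (already-used) $\EE_3$-monoidality of $\Mod_{\BPP}$, so no new coherence data needs to be constructed by hand.
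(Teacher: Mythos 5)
Your proposal is correct and takes essentially the same route as the paper: fix $\EE_1$-$\BPP$-algebra structures on each $\BPP/v_i$, form the finite tensor products in $\Alg_{\BPP}$ (using the monoidality of $\Mod_{\BPP}$), and take the filtered colimit, which the forgetful functor to $\Mod_{\BPP}$ preserves. The only quibble is that $\Alg_{\BPP}$ is merely $\EE_2$-monoidal rather than symmetric monoidal (since $\BPP$ is only $\EE_4$), which suffices for the finite tensor products; your extra discussion of the maps $\BP{n}\rightarrow\BP{n-1}$ is the content of the paper's subsequent lemma and is handled there in the same way.
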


The proof shows that for any choice of $\EE_1$-algebra structure on $\BP{n}$ and $\BPP/v_n$
over $\BPP$, we obtain an $\BPP$-algebra structure on $\BP{n-1}$.
Just as above, the homotopy rings of the truncated Brown--Peterson spectra are
\begin{equation*}
    \pi_*\BP{n}\iso\ZZ_{(p)}[v_1,\ldots,v_n]
\end{equation*}
for $n\geq 0$. Additionally, by Proposition~\ref{prop:ore}, given any $\EE_1$-algebra structure on $\BP{n}$
over $\BPP$, there is an $\EE_1$-algebra structure on $$\Erm(n)=\BP{n}[v_n^{-1}]$$ over $\BPP$
obtained by inverting $v_n$.

\begin{lemma}
    For $n\geq 1$ and any $\EE_1$-algebra structures on $\BP{n}$ and $\BPP/v_n$ over $\BPP$,
    the natural map $\BP{n}\rightarrow\BP{n-1}$ is a map of $\EE_1$-algebras over $\BPP$.
    \begin{proof}
        The map in question is the tensor product of $\BP{n}$ with the map
        $\BPP\rightarrow\BPP/v_n$ in $\Alg_{\BPP}$.
    \end{proof}
\end{lemma}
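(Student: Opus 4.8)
The plan is to realize $\BP{n-1}$ as the base change of $\BP{n}$ along the quotient map $\BPP\rightarrow\BPP/v_n$ \emph{inside} $\Alg_{\BPP}$, so that the natural map $\BP{n}\rightarrow\BP{n-1}$ becomes visibly an $\Alg_{\BPP}$-morphism. Since $\BPP$ is an $\EE_4$-ring, $\Mod_{\BPP}$ is $\EE_3$-monoidal, and hence $\Alg_{\BPP}$ carries a monoidal structure whose tensor product is the relative tensor product $\otimes_{\BPP}$ and whose unit is $\BPP$; tensoring with a fixed $\EE_1$-$\BPP$-algebra $B$ therefore defines a functor $-\otimes_{\BPP}B\colon\Alg_{\BPP}\rightarrow\Alg_{\BPP}$ whose value on underlying $\BPP$-modules is the ordinary relative tensor product. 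In particular, since the forgetful functor $\Alg_{\BPP}\rightarrow\Mod_{\BPP}$ preserves filtered colimits (as in the previous lemma) and $\otimes_{\BPP}$ preserves colimits in each variable, the functor $-\otimes_{\BPP}B$ commutes with filtered colimits.

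First I would rewrite the colimit defining $\BP{n-1}$. It runs over $i\geq n$, with $i=n$ term $\BPP/v_n$ and $i$ term $\BPP/v_n\otimes_{\BPP}\BPP/v_{n+1}\otimes_{\BPP}\cdots\otimes_{\BPP}\BPP/v_i$ for $i>n$. The first tensor factor $\BPP/v_n$ does not depend on $i$, so, using that $\BPP/v_n\otimes_{\BPP}-$ commutes with this filtered colimit, one gets an equivalence in $\Alg_{\BPP}$
\begin{equation*}
    \BP{n-1}\we\BPP/v_n\otimes_{\BPP}\left(\colim_{i\geq n}\BPP/v_{n+1}\otimes_{\BPP}\cdots\otimes_{\BPP}\BPP/v_i\right),
\end{equation*}
where now the $i=n$ term of the inner colimit is the empty tensor product $\BPP$. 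That inner diagram is cofinal with the one defining $\BP{n}$, so the inner colimit is $\BP{n}$ and we obtain $\BP{n-1}\we\BPP/v_n\otimes_{\BPP}\BP{n}$ in $\Alg_{\BPP}$ — which is just the $\EE_1$-$\BPP$-algebra structure on $\BP{n-1}$ of the previous lemma written out. Next I would note that the quotient map $q\colon\BPP\rightarrow\BPP/v_n$ is a map of $\EE_1$-$\BPP$-algebras: $\BPP$ is initial in $\Alg_{\BPP}$, so $q$ is up to contractible choice the unique such map, and it is the structure map built into the construction of $\BPP/v_n$. Applying $-\otimes_{\BPP}\BP{n}$ to $q$ and using $\BPP\otimes_{\BPP}\BP{n}\we\BP{n}$ yields a map of $\EE_1$-$\BPP$-algebras $\BP{n}\rightarrow\BPP/v_n\otimes_{\BPP}\BP{n}\we\BP{n-1}$. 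Finally I would check that on underlying $\BPP$-modules this is the colimit over $i$ of the maps $q\otimes\id$ from $\BPP/v_{n+1}\otimes_{\BPP}\cdots\otimes_{\BPP}\BPP/v_i$ to $\BPP/v_n\otimes_{\BPP}\BPP/v_{n+1}\otimes_{\BPP}\cdots\otimes_{\BPP}\BPP/v_i$, i.e.\ exactly the natural (cofiber) map $\BP{n}\rightarrow\BP{n-1}$; this is immediate from the identifications above together with the fact that $q$ is the structure map of $\BPP/v_n$.

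The step I expect to cost the most care is keeping the whole argument inside $\Alg_{\BPP}$ rather than $\Mod_{\BPP}$: one needs the relative tensor product of $\EE_1$-$\BPP$-algebras to again be an $\EE_1$-$\BPP$-algebra and $-\otimes_{\BPP}\BP{n}$ to commute with filtered colimits as a functor on algebras, which is exactly what the $\EE_3$-monoidality of $\Mod_{\BPP}$ (equivalently, the Basterra--Mandell $\EE_4$-structure on $\BPP$) provides. Once that is in hand the colimit manipulation and the underlying-module comparison are formal, and the dependence on the chosen $\EE_1$-algebra structures on $\BP{n}$ and $\BPP/v_n$ disappears, since the equivalence $\BP{n-1}\we\BPP/v_n\otimes_{\BPP}\BP{n}$ is precisely how the $\BPP$-algebra structure on $\BP{n-1}$ was defined.
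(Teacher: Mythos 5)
Your proposal is correct and is exactly the paper's argument: the authors' one-line proof identifies the map $\BP{n}\rightarrow\BP{n-1}$ as the result of tensoring $\BP{n}$ with $\BPP\rightarrow\BPP/v_n$ in $\Alg_{\BPP}$, which is precisely the construction you carry out (with the colimit manipulation and the monoidality of $\Alg_{\BPP}$ coming from the $\EE_3$-monoidal structure on $\Mod_{\BPP}$ spelled out in detail). The extra verifications you include are implicit in the paper's terse proof, so nothing substantive differs.
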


\begin{remark}
    At the moment, it is not obvious to us that the different algebra structures on $\BP{n}$ all
    result in the same $\K$-theories. Hence, we pick once and for all $\BPP$-algebra
    structures on $\BP{n-1}$ and $\BP{n}$ so that $\BP{n}\rightarrow\BP{n-1}$ is a map of
    $\BPP$-algebras. Our proofs work regardless of these choices, so there is no harm in
    them.
\end{remark}

The fiber sequence $\Sigma^{2p^n-2}\BP{n}\xrightarrow{v_n}\BP{n}\rightarrow\BP{n-1}$
exhibits $\BP{n-1}$ as a perfect right $\BP{n}$-module. The
$\EE_1$-algebra
\begin{equation*}
    \A{n-1}=\End_{\BP{n}}(\BP{n-1})^{\op}
\end{equation*}
will play a central role in this paper. Using the forgetful functor
$\Mod_{\BP{n-1}}\rightarrow\Mod_{\BP{n}}$, we get an $\EE_1$-algebra map
$\BP{n-1}\rightarrow\A{n-1}$ over $\BPP$.

\begin{lemma}\label{lem:bimodule}
    As a left $\BP{n-1}$-module, $\A{n-1}$ is equivalent to $\BP{n-1}\oplus\Sigma^{1-2p^n}\BP{n-1}$.
    \begin{proof}
        The defining sequence given by multiplication by $v_n$ on the left
        $\Sigma^{2p^n-2}\BP{n}\rightarrow\BP{n}\rightarrow\BP{n-1}$
        is a cofiber sequence of right $\BP{n}$-modules. Dualizing,
        we obtain a cofiber sequence of left $\BP{n}$-modules
        \begin{equation*}
            \Map_{\BP{n}}(\BP{n-1},\BP{n})\rightarrow\BP{n}\xrightarrow{v_n}\Sigma^{2-2p^n}\BP{n}.
        \end{equation*}
        Tensoring on the left over $\BP{n}$ with the $(\BP{n-1},\BP{n})$-bimodule $\BP{n-1}$, we obtain a fiber sequence
        \begin{equation*}
            \A{n-1}\rightarrow\BP{n-1}\xrightarrow{v_n}\Sigma^{2-2p^n}\BP{n-1}
        \end{equation*}
        of left $\BP{n-1}$-modules. It suffices now to show that $v_n$ is
        nullhomotopic as a self-map of $\BP{n-1}$. But, $v_n$ is zero in the homotopy ring
        by definition of $\BP{n-1}$. As $\BP{n-1}$ is an algebra, $v_n$ is
        nullhomotopic.
    \end{proof}
\end{lemma}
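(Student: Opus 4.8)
The plan is to compute $\A{n-1}=\End_{\BP{n}}(\BP{n-1})^{\op}$ by resolving the compact right $\BP{n}$-module $\BP{n-1}$ and reducing everything to a single nullhomotopy. Concretely, I would start from the defining cofiber sequence of right $\BP{n}$-modules
\begin{equation*}
    \Sigma^{2p^n-2}\BP{n}\xrightarrow{v_n}\BP{n}\rightarrow\BP{n-1},
\end{equation*}
apply $\Map_{\BP{n}}(-,\BP{n})$ to obtain a cofiber sequence
\begin{equation*}
    \Map_{\BP{n}}(\BP{n-1},\BP{n})\rightarrow\BP{n}\xrightarrow{v_n}\Sigma^{2-2p^n}\BP{n}
\end{equation*}
of left $\BP{n}$-modules, and then apply $\BP{n-1}\otimes_{\BP{n}}(-)$, using the $(\BP{n-1},\BP{n})$-bimodule structure on $\BP{n-1}$. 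The outcome should be a fiber sequence of left $\BP{n-1}$-modules
\begin{equation*}
    \A{n-1}\rightarrow\BP{n-1}\xrightarrow{v_n}\Sigma^{2-2p^n}\BP{n-1},
\end{equation*}
after which it remains only to see that the right-hand map is nullhomotopic.

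For the first two steps the key input is that $\BP{n-1}$ is perfect as a right $\BP{n}$-module---which is clear from the two-term defining sequence---so that it is dualizable and $\BP{n-1}\otimes_{\BP{n}}\Map_{\BP{n}}(\BP{n-1},\BP{n})\we\Map_{\BP{n}}(\BP{n-1},\BP{n-1})=\End_{\BP{n}}(\BP{n-1})$. I would track the left $\BP{n-1}$-action coming from the $(\BP{n-1},\BP{n})$-bimodule $\BP{n-1}$ through this equivalence and check that it matches the left $\BP{n-1}$-module structure on $\A{n-1}$ induced by the algebra map $\BP{n-1}\rightarrow\A{n-1}$ of the forgetful functor; here it is useful that $\pi_*\BP{n-1}$ is commutative, so left and right multiplications agree up to homotopy and the bookkeeping is harmless. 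The middle terms are immediate: $\BP{n-1}\otimes_{\BP{n}}\BP{n}\we\BP{n-1}$, and likewise after suspension.

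Finally, the connecting map $\BP{n-1}\xrightarrow{v_n}\Sigma^{2-2p^n}\BP{n-1}$ is multiplication by the image of $v_n\in\pi_{2p^n-2}\BP{n}$ under $\BP{n}\rightarrow\BP{n-1}$, which is $0$ by the very definition of $\BP{n-1}$. Since $\BP{n-1}$ is an $\EE_1$-ring, this self-map factors through a zero homotopy class and is therefore nullhomotopic. A fiber sequence whose second map is null splits, so $\A{n-1}\we\BP{n-1}\oplus\Sigma^{-1}\Sigma^{2-2p^n}\BP{n-1}=\BP{n-1}\oplus\Sigma^{1-2p^n}\BP{n-1}$ as left $\BP{n-1}$-modules, as claimed.

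The step I expect to require the most care is the identification of $\BP{n-1}\otimes_{\BP{n}}\Map_{\BP{n}}(\BP{n-1},\BP{n})$ with $\A{n-1}$ as a \emph{left $\BP{n-1}$-module}: it is easy to produce an abstract equivalence of underlying spectra, but one must make sure the module structure produced by the tensor--dual manipulation is the one named in the statement rather than, say, the other $\BP{n-1}$-action on $\End_{\BP{n}}(\BP{n-1})$. Everything else---perfectness of $\BP{n-1}$ over $\BP{n}$, the vanishing of $v_n$ in $\pi_*\BP{n-1}$, and splitting off a null map---is routine.
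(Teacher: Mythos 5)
Your argument is correct and follows essentially the same route as the paper: dualize the defining cofiber sequence, tensor with the $(\BP{n-1},\BP{n})$-bimodule $\BP{n-1}$ to identify the fiber with $\A{n-1}$, and split off the nullhomotopic $v_n$. The extra care you flag about matching the left $\BP{n-1}$-module structures is a reasonable point the paper leaves implicit, but it does not change the argument.
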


We will return to give a closer analysis of $\A{n-1}$ in the next section.

\section{Rational $\EE_\infty$-structures}\label{sec:obstruction}

The goal of this section is to show that the $\Hrm\QQ$-algebras $\BP{n-1}_\QQ$ and $\A{n-1}_\QQ$
admit $\EE_\infty$-ring structures and that $\BP{n-1}_\QQ\rightarrow\A{n-1}_\QQ$ is an
$\EE_\infty$-ring map. Our arguments for the rational $\EE_\infty$-ring structure on
$\A{n-1}_\QQ$ uses explicit rational commutative differential graded rings.

\begin{remark}
    In order to construct the $\EE_\infty$-ring structures we are interested in
    we have to replace $\BP{n}_\QQ$ and $\A{n}_\QQ$ by weakly equivalent
    models. This however does not affect the rest of the argument, as all
    functors considered in this paper are homotopy invariant.
\end{remark}

\begin{proposition}\label{prop:bpneinfinity}
    For $n\geq 0$, $\BP{n}_\QQ$ is an $\EE_\infty$-$\Hrm\QQ$-algebra.
\end{proposition}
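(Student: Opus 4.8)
The plan is to observe that $\pi_*\BP{n}_\QQ \iso \QQ[v_1,\ldots,v_n]$ is a polynomial algebra on even-degree generators, and then to build an $\EE_\infty$-$\Hrm\QQ$-algebra with this homotopy ring and compare it to $\BP{n}_\QQ$. Concretely, let $M = \bigoplus_{i=1}^n \Sigma^{2p^i-2}\Hrm\QQ$, a compact $\Hrm\QQ$-module, and set $R = \Sym_{\Hrm\QQ} M$. Since rational homotopy theory collapses all higher $k$-invariants — by Shipley's theorem \cite{shipley-hz}*{Theorem 1.2} every $\Hrm\QQ$-algebra is equivalent to a rational commutative dga, and a polynomial algebra on even generators is intrinsically formal — we have $\pi_*R \iso \QQ[v_1,\ldots,v_n]$ as graded rings, with the $v_i$ in the same degrees as in $\pi_*\BP{n}_\QQ$. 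The remaining task is to promote the underlying $\EE_1$-algebra equivalence (which is easy, since both sides are free $\EE_1$-algebras on $M$ up to equivalence, or since $\BP{n}_\QQ$ is already an $\EE_1$-$\BPP$-algebra whose homotopy is polynomial and hence formal as an $\EE_1$-ring) to the statement that $\BP{n}_\QQ$ itself carries an $\EE_\infty$-structure compatible with its $\EE_1$-structure.

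First I would make precise the formality claim: a connective-after-shift rational dga with polynomial homotopy concentrated in even degrees is quasi-isomorphic to its homotopy ring, because the obstructions to formality — the higher Massey products, or equivalently the negative-degree Hochschild cohomology groups $\HH^{s}(\pi_*R)_{t}$ with $t-s<0$ controlling deformations — vanish for a smooth (polynomial) algebra in even degrees by a direct HKR/Koszul computation. Alternatively, and more cleanly, one can use the intrinsic formality of free graded-commutative algebras: any two $\EE_\infty$-$\Hrm\QQ$-algebras with homotopy ring $\QQ[v_1,\ldots,v_n]$ on even generators are equivalent, since the moduli space of such realizations is connected (the relevant André--Quillen / topological Quillen cohomology groups obstructing uniqueness vanish rationally in the polynomial, even-degree case).

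The one genuine point requiring care — and the step I expect to be the main obstacle — is matching the $\EE_1$-structure we already have on $\BP{n}_\QQ$ with the $\EE_\infty$-structure on $R=\Sym_{\Hrm\QQ}M$, i.e. producing an equivalence of $\EE_1$-$\Hrm\QQ$-algebras between $\BP{n}_\QQ$ and (the underlying $\EE_1$-algebra of) $R$. For this I would again invoke formality, now at the level of $\EE_1$-$\Hrm\QQ$-algebras: the $\EE_1$-algebra $\BP{n}_\QQ$ has polynomial homotopy in even degrees, and the space of $\EE_1$-$\Hrm\QQ$-algebra structures refining a given homotopy ring of this form is connected rationally — the relevant obstruction groups are Hochschild cohomology groups $\HH^{s}_{\QQ}(\QQ[v_1,\ldots,v_n]; \QQ[v_1,\ldots,v_n])_{t}$ in negative total degree $t-s$, which vanish because the cotangent complex of a polynomial ring is free (concentrated in homological degree $0$, even internal degree) so Hochschild cohomology is concentrated in non-negative total degree. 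Hence $\BP{n}_\QQ \we R$ as $\EE_1$-$\Hrm\QQ$-algebras, and transporting the $\EE_\infty$-structure across this equivalence finishes the proof.

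Once $n=0$ is noted separately — there $\BP{0}_\QQ = \Hrm\QQ$ tautologically — the general case follows as above. I would write the argument to apply uniformly for all $n\geq 0$, remarking (per the Remark preceding the statement) that we are free to replace $\BP{n}_\QQ$ by the weakly equivalent model $\Sym_{\Hrm\QQ}M$ since all functors in play are homotopy-invariant.
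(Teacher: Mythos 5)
Your route is genuinely different from the paper's, and the step you yourself single out as the crux is the one that fails. The paper does not argue by formality at all: it uses the $\EE_\infty$-structure on $\MU_\QQ$, observes that each polynomial generator $m_i\in\pi_{2i}\MU_\QQ$ classifies a map $\Sym_{\Hrm\QQ}\Sigma^{2i}\Hrm\QQ\to\MU_\QQ$ of $\EE_\infty$-$\Hrm\QQ$-algebras, and kills the unwanted generators by pushouts along $\Sym_{\Hrm\QQ}\Sigma^{2i}\Hrm\QQ\to\Hrm\QQ$ in $\EE_\infty$-$\Hrm\QQ$-algebras; each pushout is computed by the module-level cofiber of multiplication by $m_i$, so the resulting colimit visibly has homotopy ring $\QQ[v_1,\ldots,v_n]$ and is a weakly equivalent model in the sense of the Remark preceding the statement. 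No obstruction theory enters. In that weaker sense your first paragraph (the computation $\pi_*\Sym_{\Hrm\QQ}M\iso\QQ[v_1,\ldots,v_n]$ for $M=\bigoplus_i\Sigma^{2p^i-2}\Hrm\QQ$, which is correct rationally) already proves what the paper actually uses, and the rest could simply be dropped.

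The genuine gap is in your $\EE_1$-formality step. Writing $P=\QQ[v_1,\ldots,v_n]$, the claim that $\HH^s_\QQ(P;P)_t$ is concentrated in non-negative total degree $t-s$ is false: by Hochschild--Kostant--Rosenberg this bigraded ring is the exterior algebra over $P$ on the derivations $\partial/\partial v_i$, and $\partial/\partial v_i$ has cohomological degree $1$ and internal degree $-(2p^i-2)$, hence total degree $1-2p^i<0$. Freeness of the cotangent complex gives vanishing of Andr\'e--Quillen cohomology in positive cohomological degrees, which is the obstruction theory for $\EE_\infty$- (equivalently, cdga-) realizations of $P$; you have transplanted that vanishing into the $\EE_1$-setting, where the obstruction groups really are Hochschild cohomology and do not vanish in the relevant range. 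Concretely, the higher $A_\infty$-products $m_k$ on $P$ live in $\HH^k(P;P)_{k-2}$; since $\HH^*(P;P)$ is concentrated in even internal degrees these vanish for $k$ odd, and $\HH^k(P;P)=0$ for $k>n$, so your argument does go through for $n\le 3$, but for $n\ge 4$ the groups $\HH^k(P;P)_{k-2}$ with $k$ even, $4\le k\le n$, are nonzero and intrinsic $\EE_1$-formality is not established. The parenthetical assertion that $\BP{n}_\QQ$ and $\Sym_{\Hrm\QQ}M$ are both free $\EE_1$-algebras on $M$ is also wrong for $n\ge 2$: the free $\EE_1$-$\Hrm\QQ$-algebra on $M$ is the tensor algebra, whose homotopy is the free associative algebra $\QQ\langle v_1,\ldots,v_n\rangle$, not $P$. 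If you want the stronger statement that the $\EE_\infty$-model is equivalent to $\BP{n}_\QQ$ \emph{as an $\EE_1$-ring}, you need a different argument; as given, the proposal proves the proposition only in the weak-equivalence-of-models sense, and the heuristic it offers for the stronger claim should not be kept as stated.
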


\begin{proof}
    Fix a prime $p$. We begin by noting that $\BPP_\QQ$ admits a natural $\EE_\infty$-ring structure arising from
    $\MU_\QQ$. Indeed, recall from~\cite{adams} that $\pi_*\MU_\QQ\iso\QQ[m_1,\ldots]$, where $m_i$ is a degree $2i$
    class represented by a rational multiple of the cobordism class of $\CC\PP^i$. To construct the spectrum
    $\BPP_\QQ$ from $\MU_\QQ$, one kills each $m_i$ where $i+1$ is \emph{not} a
    $p$-power. The choice of $m_i\in\pi_{2i}\MU_\QQ$ determines by definition a map
    $$\Sym_{\Hrm\QQ}\Sigma^{2i}\Hrm\QQ\rightarrow\MU_\QQ$$ of $\EE_\infty$-algebras over
    $\Hrm\QQ$. The $\EE_\infty$-$\Hrm\QQ$-algebra $\MU_\QQ/m_i$ is the pushout
    \begin{equation*}
        \xymatrix{
            \Sym_{\Hrm\QQ}\Sigma^{2i}\Hrm\QQ\ar[d]\ar[r]    &   \MU_\QQ\ar[d]\\
            \Hrm\QQ\ar[r]                                   &   \MU_\QQ/m_i
        }
    \end{equation*}
    in $\EE_\infty$-$\Hrm\QQ$-algebras,
    and it follows from the cofiber sequence
    \begin{equation*}
        \Sigma^{2i}\Sym_{\Hrm\QQ}\Sigma^{2i}\Hrm\QQ\rightarrow\Sym_{\Hrm\QQ}\Sigma^{2i}\Hrm\QQ\rightarrow\Hrm\QQ,
    \end{equation*}
    where the right-hand map is the map of $\EE_\infty$-$\Hrm\QQ$-algebras defined by
    sending a generator to $0\in\pi_{2i}\Hrm\QQ$, that
    \begin{equation*}
        \pi_*\MU_\QQ/m_i\iso\QQ[m_1,\ldots,m_{i-1},\widehat{m}_i,m_{i+1},\ldots].
    \end{equation*}
    In other words, $\MU_\QQ/m_i$ is the cofiber of multiplication by $m_i$ on $\MU_\QQ$ \emph{as a module}.
    Just as in the previous section, we now find that $\BPP_\QQ$ has an
    $\EE_\infty$-$\Hrm\QQ$-ring
    structure obtained by taking the colimit
    \begin{equation*}
        \colim_{i\neq p^j-1}\MU_\QQ/m_i\rwe\BPP_\QQ
    \end{equation*}
    in $\CAlg_{\MU_\QQ}$, the category of $\EE_{\infty}$-algebras over $\MU_\QQ$. The same process works to reduce from $\BPP_\QQ$ to $\BP{n}_\QQ$.
\end{proof}

It would be convenient to view
$\A{n-1}_\QQ$ as an $\EE_1$-$\BP{n-1}_\QQ$-algebra now that we know that $\BP{n-1}_\QQ$ is an
$\EE_\infty$-ring. However, simply having a map of $\EE_1$-algebras $\BP{n-1}_\QQ\rightarrow
\A{n-1}_\QQ$ is not enough to guarantee this.

\begin{proposition}
    The map $\BP{n-1}_\QQ\rightarrow\A{n-1}_\QQ$ makes $\A{n-1}_\QQ$ into an $\EE_1$-$\BP{n-1}_\QQ$-algebra.
\end{proposition}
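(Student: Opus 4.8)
The plan is to exhibit $\A{n-1}_\QQ$ as the base change, along the $\EE_\infty$-ring map $\Hrm\QQ\rightarrow\BP{n-1}_\QQ$, of an $\EE_1$-$\Hrm\QQ$-algebra; the $\BP{n-1}_\QQ$-module structure is then inherited. First some reductions. Since $\BP{n-1}$ is a perfect $\BP{n}$-module (Section~\ref{sec:tbps}) and rationalization is a smashing, symmetric monoidal Bousfield localization, it commutes with the formation of endomorphism $\EE_1$-rings of perfect modules, so
\begin{equation*}
    \A{n-1}_\QQ^{\op}\simeq\End_{\BP{n}_\QQ}(\BP{n-1}_\QQ)
\end{equation*}
as $\EE_1$-rings, compatibly with the maps from $\BP{n-1}_\QQ$. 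By Proposition~\ref{prop:bpneinfinity} and its proof, $\BP{n}_\QQ$ is an $\EE_\infty$-ring, $\BP{n-1}_\QQ$ is an $\EE_\infty$-$\BP{n}_\QQ$-algebra whose underlying $\BP{n}_\QQ$-module is the cofiber of $v_n$, and $\BP{n-1}_\QQ$ may be presented as a pushout of $\EE_\infty$-rings $\BP{n-1}_\QQ\simeq\BP{n}_\QQ\otimes_P\Hrm\QQ$, where $P=\Sym_{\Hrm\QQ}(\Sigma^{2p^n-2}\Hrm\QQ)$, the map $P\rightarrow\BP{n}_\QQ$ classifies $v_n$, and $P\rightarrow\Hrm\QQ$ is the augmentation. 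By the remark preceding Proposition~\ref{prop:bpneinfinity} we may replace $\BP{n-1}_\QQ$ and $\A{n-1}_\QQ$ by such models.

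The symmetric monoidal base change functor $\BP{n}_\QQ\otimes_P(-)\colon\Mod_P\rightarrow\Mod_{\BP{n}_\QQ}$ preserves perfect objects and carries the endomorphism $\EE_1$-algebra of a perfect object to the endomorphism $\EE_1$-algebra of its image; applied to the perfect $P$-module $\Hrm\QQ$ (a Koszul complex on the polynomial generator of $P$) this gives, as $\EE_1$-rings,
\begin{equation*}
    \A{n-1}_\QQ^{\op}\simeq\End_{\BP{n}_\QQ}(\BP{n-1}_\QQ)\simeq\BP{n}_\QQ\otimes_P\End_P(\Hrm\QQ).
\end{equation*}
The crucial observation is that $\End_P(\Hrm\QQ)$ is naturally an $\EE_1$-$\Hrm\QQ$-algebra: because $P$ is a commutative $\Hrm\QQ$-algebra, $\Mod_P$ is a $\Mod_{\Hrm\QQ}$-module category, and for any object $M$ of a $\Mod_{\Hrm\QQ}$-module category the endomorphism object of $M$ formed in $\Mod_{\Hrm\QQ}$ is an $\EE_1$-algebra object of $\Mod_{\Hrm\QQ}$ whose underlying $\EE_1$-ring is the endomorphism ring of $M$; one takes $M=\Hrm\QQ\in\Mod_P$. (On homotopy groups $\pi_*\End_P(\Hrm\QQ)\iso\Lambda_\QQ\langle\epsilon_{1-2p^n}\rangle$, which matches $\pi_*\A{n-1}$.)

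Finally, the square consisting of $P\rightarrow\Hrm\QQ$, $P\rightarrow\BP{n}_\QQ$, $\Hrm\QQ\rightarrow\BP{n-1}_\QQ$, and $\BP{n}_\QQ\rightarrow\BP{n-1}_\QQ$ is a pushout of $\EE_\infty$-rings, and the $P$-module underlying $\End_P(\Hrm\QQ)$ is restricted from $\Hrm\QQ$ along $P\rightarrow\Hrm\QQ$; base change therefore supplies a natural equivalence of lax symmetric monoidal functors $\BP{n}_\QQ\otimes_P(-)$ and $\BP{n-1}_\QQ\otimes_{\Hrm\QQ}(-)$ out of $\Mod_{\Hrm\QQ}$, whence
\begin{equation*}
    \A{n-1}_\QQ^{\op}\simeq\BP{n-1}_\QQ\otimes_{\Hrm\QQ}\End_P(\Hrm\QQ)
\end{equation*}
as $\EE_1$-$\BP{n-1}_\QQ$-algebras, with unit the map $\BP{n-1}_\QQ\rightarrow\A{n-1}_\QQ^{\op}$ in question. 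Passing to opposite algebras---an equivalence of $\Alg_{\EE_1}(\Mod_{\BP{n-1}_\QQ})$, since $\Mod_{\BP{n-1}_\QQ}$ is symmetric monoidal---then promotes $\A{n-1}_\QQ$ to an $\EE_1$-$\BP{n-1}_\QQ$-algebra refining the given $\EE_1$-ring map. The main obstacle is bookkeeping: every equivalence above must be tracked as an equivalence of $\EE_1$-algebra objects in the relevant module categories, not merely of underlying spectra---in particular the multiplicativity of base change on endomorphism rings of perfect modules, and the recognition of $\End_P(\Hrm\QQ)$ as an algebra object of $\Mod_{\Hrm\QQ}$ (equivalently, the centrality of $\Hrm\QQ\rightarrow\End_P(\Hrm\QQ)$). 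This is precisely the subtlety flagged before the statement: a bare $\EE_1$-ring map $\BP{n-1}_\QQ\rightarrow\A{n-1}_\QQ$ would not suffice, but the needed centrality is imported, via base change, from the manifest $\Hrm\QQ$-linearity of $\End_P(\Hrm\QQ)$ inside the $\Hrm\QQ$-linear category $\Mod_P$.
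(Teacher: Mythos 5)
Your argument is correct, but it takes a genuinely different route from the paper's. The paper's proof is short and exploits two rational accidents: since $\BP{n}_\QQ$ is $\EE_\infty$, the endomorphism algebra $\End_{\BP{n}_\QQ}(\BP{n-1}_\QQ)$ is automatically a $\BP{n}_\QQ$-algebra; since $\BP{n-1}_\QQ$ is a \emph{free} $\EE_\infty$-$\Hrm\QQ$-algebra, one can choose an $\EE_\infty$-ring map $\BP{n-1}_\QQ\rightarrow\BP{n}_\QQ$ (a section of the quotient, sending $v_i$ to $v_i$) and restrict the algebra structure along it. The price of that choice is that the resulting unit map need not obviously be the Morita-theoretic map $\BP{n-1}_\QQ\rightarrow\A{n-1}_\QQ$, and the paper checks agreement by comparing both maps on the free generators $\bigoplus_i\Sigma^{2p^i-2}\Hrm\QQ$. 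Your base-change argument---presenting $\BP{n-1}_\QQ$ as $\BP{n}_\QQ\otimes_P\Hrm\QQ$ with $P=\Sym_{\Hrm\QQ}(\Sigma^{2p^n-2}\Hrm\QQ)$ and identifying $\A{n-1}_\QQ^{\op}$ with $\BP{n-1}_\QQ\otimes_{\Hrm\QQ}\End_P(\Hrm\QQ)$---avoids choosing a section and makes the identification of the unit structural rather than a computation, at the cost of invoking more machinery (compatibility of endomorphism algebras of perfect modules with symmetric monoidal base change, and the factorization of the $P$-action on $\End_P(\Hrm\QQ)$ through $\Hrm\QQ$). It also proves strictly more: your equivalence $\A{n-1}_\QQ\we\BP{n-1}_\QQ\otimes_{\Hrm\QQ}\End_P(\Hrm\QQ)^{\op}$ immediately yields the homotopy ring of $\A{n-1}$ and, since $\End_P(\Hrm\QQ)$ is rationally formal (an exterior algebra on one odd generator), essentially the $\EE_\infty$-structure established in the rest of the section. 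The one step you should not leave implicit is the assertion that the unit of your constructed algebra is ``the map in question'': this amounts to checking that base change carries the restriction map $\End_{\Hrm\QQ}(\Hrm\QQ)\rightarrow\End_P(\Hrm\QQ)$ to the restriction map $\End_{\BP{n-1}_\QQ}(\BP{n-1}_\QQ)\rightarrow\End_{\BP{n}_\QQ}(\BP{n-1}_\QQ)$, which is true by naturality but is precisely the content the paper verifies by hand via freeness.
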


\begin{proof}
As rationalization is a localization, $\A{n-1}_\QQ$ is equivalently the endomorphism algebra of
    $\BP{n-1}_\QQ$ over $\BP{n}_\QQ$, i.e.,
    $\A{n-1}_\QQ\we\End_{\BP{n}_\QQ}(\BP{n-1}_\QQ)$. Since $\BP{n}_\QQ$ is an $\EE_\infty$-ring, $\A{n-1}_\QQ$ is
    automatically equipped with the structure of a $\BP{n}_\QQ$-algebra. Over the
    rationals, we have an $\EE_\infty$-ring map $\BP{n-1}_\QQ\rightarrow\BP{n}_\QQ$, and
    this makes $\A{n-1}_\QQ$ into a $\BP{n-1}_\QQ$-algebra. To see that this map agrees with the map
    $\BP{n-1}_\QQ\rightarrow\A{n-1}_\QQ$ coming from Morita theory note that both maps
    restrict to equivalent maps
    \begin{equation*}
        \bigoplus_{i=1}^{n-1}\Sigma^{2p^i-2}\Hrm\QQ\rightarrow\A{n-1}_\QQ,
    \end{equation*}
    which is enough to conclude since $\BP{n-1}_\QQ$ is a free $\EE_\infty$-algebra.
\end{proof}

In order to prove that $\A{n-1}_\QQ$ admits an $\EE_\infty$-ring structure,
we need to know the homotopy ring of $\A{n-1}$, which we determine now. Unfortunately, we
do not say much about the ring structure over $\SS$, but we do find $\A{n-1}_\QQ$ up to weak
equivalence.

\begin{lemma}
    The homotopy ring of $\A{n-1}$ is
    \begin{equation*}
        \pi_*\A{n-1}\iso\Lambda_{\pi_*\BP{n-1}}\langle
        \epsilon_{1-2p^n}\rangle
    \end{equation*}
    the graded exterior algebra over $\pi_*\BP{n-1}$ on one generator
    $\epsilon_{1-2p^n}$ in degree $1-2p^n$.
\end{lemma}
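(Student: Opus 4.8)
The plan is to bootstrap off Lemma~\ref{lem:bimodule}, which already determines the underlying graded abelian group, and then recover the multiplication using only elementary degree considerations together with the fact that $\A{n-1}$ is an $\EE_1$-algebra over $\BPP$. First I would record the additive structure: by Lemma~\ref{lem:bimodule} we have $\A{n-1}\simeq\BP{n-1}\oplus\Sigma^{1-2p^n}\BP{n-1}$ as left $\BP{n-1}$-modules, so there is a class $\epsilon_{1-2p^n}\in\pi_{1-2p^n}\A{n-1}$ generating the second summand, and $\pi_*\A{n-1}$ is free of rank two over $\pi_*\BP{n-1}$ on $\{1,\epsilon_{1-2p^n}\}$; explicitly $\pi_k\A{n-1}\iso\pi_k\BP{n-1}\oplus\pi_{k+2p^n-1}\BP{n-1}$ for every $k$. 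In particular the structure map $\pi_*\BP{n-1}\rightarrow\pi_*\A{n-1}$ is the inclusion of the summand $\pi_*\BP{n-1}\cdot 1$, so it is injective, $\pi_*\BP{n-1}$ is a graded subring, and $\pi_*\A{n-1}$ is generated as a ring by $\pi_*\BP{n-1}$ together with $\epsilon_{1-2p^n}$.

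Next I would observe that this subring is central. Since $\A{n-1}$ is an $\EE_1$-algebra over $\BPP$, the ring map $\pi_*\BPP\rightarrow\pi_*\A{n-1}$ has central image; and since $\pi_*\BP{n-1}\iso\ZZ_{(p)}[v_1,\ldots,v_{n-1}]$ is generated over $\ZZ_{(p)}$ by the images of $v_1,\ldots,v_{n-1}\in\pi_*\BPP$, the whole image of $\pi_*\BP{n-1}$ in $\pi_*\A{n-1}$ is central. Thus $\pi_*\A{n-1}$ is a unital graded-associative $\pi_*\BP{n-1}$-algebra which, as a $\pi_*\BP{n-1}$-module, is $\pi_*\BP{n-1}\cdot 1\oplus\pi_*\BP{n-1}\cdot\epsilon_{1-2p^n}$ with $\pi_*\BP{n-1}$ central, and the only structure constant left to determine is $\epsilon_{1-2p^n}^2$. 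To compute it I would do a degree count: $\epsilon_{1-2p^n}^2$ lies in $\pi_{2-4p^n}\A{n-1}$, which by the additive description is $\pi_{2-4p^n}\BP{n-1}\oplus\pi_{1-2p^n}\BP{n-1}$; both $2-4p^n$ and $1-2p^n$ are strictly negative while $\pi_*\BP{n-1}$ is concentrated in non-negative degrees, so $\pi_{2-4p^n}\A{n-1}=0$ and hence $\epsilon_{1-2p^n}^2=0$. Therefore $\pi_*\A{n-1}$ is the free graded-commutative $\pi_*\BP{n-1}$-algebra on a single odd-degree generator $\epsilon_{1-2p^n}$ — equivalently $\pi_*\BP{n-1}\oplus\pi_*\BP{n-1}\epsilon_{1-2p^n}$ with $\epsilon_{1-2p^n}^2=0$ — which is exactly $\pi_*\BP{n-1}[\epsilon_{1-2p^n}]$.

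The only genuinely non-formal input is Lemma~\ref{lem:bimodule}; everything else is degree bookkeeping. The one point that deserves care is the centrality claim of the second paragraph: without it one cannot conclude a priori that $\epsilon_{1-2p^n}$ commutes with the $v_i$ (the commutator lands in the correct degree to be $c\cdot\epsilon_{1-2p^n}$ for some $c\in\pi_{|v_i|}\BP{n-1}$, and identifying $c$ with $v_i$ is not automatic), so invoking the $\BPP$-algebra structure is what lets us sidestep a right-module refinement of Lemma~\ref{lem:bimodule}. I would regard verifying that $\pi_*\BPP$ maps centrally — which is standard, as the multiplication on $\A{n-1}$ is $\BPP$-linear — as the only step requiring a word of justification.
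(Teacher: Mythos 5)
Your proof is correct, and it takes a genuinely different route from the paper at the one non-trivial step. Both arguments start from Lemma~\ref{lem:bimodule} for the additive structure and dispose of $\epsilon_{1-2p^n}^2$ by the same degree count; the issue, as the paper also says explicitly, is whether $v_i\epsilon_{1-2p^n}=\epsilon_{1-2p^n}v_i$. The paper handles this by observing that $\pi_*\A{n-1}\rightarrow\pi_*\A{n-1}_\QQ$ is injective (the homotopy is free over $\ZZ_{(p)}$), replacing $\A{n-1}_\QQ$ by the explicit endomorphism dga of the cone of $v_n$ over the formal rational dga $\BP{n}_\QQ$, and checking the commutation by a $2\times 2$ matrix computation; this has the side benefit of setting up the dg model reused in the subsequent proposition on the commutative dga structure. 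You instead invoke the $\EE_1$-$\BPP$-algebra structure on $\A{n-1}$ (which the paper does establish in Section~\ref{sec:tbps}, along with the fact that $\BP{n-1}\rightarrow\A{n-1}$ is a map of $\BPP$-algebras) and the standard fact that for an $\EE_1$-algebra $A$ over an $\EE_2$-ring $R$ the image of $\pi_*R$ in $\pi_*A$ is graded-central; since $\BPP$ is $\EE_4$ and the $v_i$ have even degree and lift to $\pi_*\BPP$, centrality follows with no signs. This is cleaner and stays integral, avoiding the rationalization and the explicit model entirely; the only thing it costs you is that the one fact you lean on (centrality of the unit, with Koszul signs) deserves the word of justification you already flag, and you should also note, as you implicitly do, that the left $\BP{n-1}$-module structure in Lemma~\ref{lem:bimodule} is multiplication by the image of the ring map $\BP{n-1}\rightarrow\A{n-1}$ -- a point that becomes harmless once centrality is known.
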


\begin{proof}
    Under the splitting of Lemma~\ref{lem:bimodule}, we let $\epsilon=\epsilon_{1-2p^n}$ denote the class of
    the map $\Sigma^{1-2p^n}\BP{n-1}\rightarrow\A{n-1}$. Because of degree considerations, $\epsilon^2=0$ in the
    homotopy ring of $\A{n-1}$. The description of $\pi_*\A{n-1}$ is correct
    as a left $\pi_*\BP{n-1}$-module by Lemma~\ref{lem:bimodule}. The only question is whether $v_i\epsilon=\epsilon v_i$ for $1\leq
    i\leq n-1$.
    Since $\pi_*\A{n-1}\rightarrow\pi_*\A{n-1}_\QQ$ is injective,
    it is enough to prove this statement
    rationally. By Proposition~\ref{prop:bpneinfinity}, $\BP{n}_\QQ$ is a formal rational
    dga on $v_1,\ldots,v_n$. For the
    remainder of the proof, we work entirely with $\BP{n}_\QQ$ as a formal rational dga, and
    we use dg modules over $\BP{n}_\QQ$. Let
    $X=\mathrm{cone}(v_n)$, where $v_n\colon\Sigma^{2p^n-2}\BP{n}_\QQ\rightarrow\BP{n}_\QQ$. Thus, $X$ is a dg
    $\BP{n}_\QQ$-module with
    $X_k=\pi_k\BP{n}_\QQ\oplus\pi_{k-1}\left(\Sigma^{2p^n-2}\BP{n}_\QQ\right)$, and with
    the differential $X_k\rightarrow X_{k-1}$ given by
    \begin{equation*}
        \begin{pmatrix}
            0   &   v_n\\
            0   &   0
        \end{pmatrix}.
    \end{equation*}
    Recall that $\A{n-1}_\QQ\we\End_{\BP{n}_\QQ}(X)$. 
    Each $v_i$ for $1\leq i\leq n-1$ acts as an endomorphism of $X$ in the obvious way, with
    matrix representation
     \begin{equation*}
        \begin{pmatrix}
            v_i   &   0\\
            0   &  v_i 
        \end{pmatrix}.
    \end{equation*}
    The element $\epsilon$ can be represented as well. Let
    $$\sigma\colon\pi_{*-1}\left(\Sigma^{2p^n-2}\BP{n}_\QQ\right)\rightarrow\pi_{1-2p^n+*}\BP{n}_\QQ$$ be a
    fixed $\pi_*\BP{n}_\QQ$-module isomorphism.
    Then,
    \begin{equation*}
        \epsilon=\begin{pmatrix}
            0   &   \sigma\\
            0   &   0
        \end{pmatrix}
    \end{equation*}
    from $\Sigma^{1-2p^n}X$ to $X$
    is a map of $\BP{n}_\QQ$-modules. For degree reasons, this map is
    unique up to the choice of $\sigma$, which in turn is unique up to multiplying by a
    non-zero rational number. Hence, we can assume that this $\epsilon$ represents
    $\epsilon_{1-2p^n}$ above, as $X$ is cofibrant as a $\BP{n}_\QQ$ dg module. Now, we see
    that $v_i\epsilon=\epsilon v_i$ for $1\leq i\leq n-1$.
\end{proof}

Similar ideas allow us to prove that $\A{n-1}_\QQ$ is equivalent to a commutative rational dga.

\begin{proposition}\label{prop:aneinfinity}
    The algebra $\A{n-1}_\QQ$ admits the structure of a commutative rational differential graded algebra.
\end{proposition}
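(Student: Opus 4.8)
The plan is to realize $\A{n-1}_\QQ$ by an explicit, formal, commutative rational dga, namely $\QQ[v_1,\dots,v_{n-1}]\otimes_\QQ\Lambda_\QQ\langle\epsilon_{1-2p^n}\rangle$ with zero differential (which by the previous lemma has the correct homotopy ring), by exhibiting it as the endomorphism algebra of the derived quotient $\BP{n}_\QQ/v_n$ and computing that algebra via Koszul duality for the polynomial ring on the single generator $v_n$. Throughout I work with the commutative dga models supplied by Proposition~\ref{prop:bpneinfinity} and~\cite{shipley-hz}*{Theorem~1.2}: $\BP{n}_\QQ$ is the formal rational dga $\QQ[v_1,\dots,v_n]$ and $\BP{n-1}_\QQ$ the formal rational dga $\QQ[v_1,\dots,v_{n-1}]$.

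First I would record a base-change presentation. Writing $\QQ[v_n]$ for the sub-dga of $\BP{n}_\QQ$ on the single even generator $v_n$, the residue ring $\QQ=\QQ[v_n]/(v_n)$ is a perfect $\QQ[v_n]$-module, with Koszul resolution $[\Sigma^{2p^n-2}\QQ[v_n]\xrightarrow{v_n}\QQ[v_n]]$. Since $v_n$ is a non-zero-divisor in $\pi_*\BP{n}_\QQ$, the natural map identifies $\BP{n-1}_\QQ\we\BP{n}_\QQ\otimes_{\QQ[v_n]}\QQ$, and this agrees with the $\EE_\infty$-$\Hrm\QQ$-algebra map $\BP{n}_\QQ\to\BP{n-1}_\QQ$ used to define $\A{n-1}$, both being classified by $v_n\mapsto 0$, $v_i\mapsto v_i$; here I must check this last point, which is immediate because $\BP{n}_\QQ$ is a free $\EE_\infty$-$\Hrm\QQ$-algebra. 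Because $\QQ$ is perfect over $\QQ[v_n]$, passing to endomorphism algebras commutes with this base change, so as $\EE_1$-rings
\begin{equation*}
    \A{n-1}_\QQ\we\End_{\BP{n}_\QQ}(\BP{n-1}_\QQ)\we\BP{n}_\QQ\otimes_{\QQ[v_n]}\End_{\QQ[v_n]}(\QQ),
\end{equation*}
where the middle equivalence is from the previous lemma (the opposite-algebra decoration in the definition of $\A{n-1}$ being harmless rationally, as the answer will be commutative).

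Next I would compute the Koszul dual $\End_{\QQ[v_n]}(\QQ)$. Applying $\End_{\QQ[v_n]}(-)$ to the Koszul resolution produces a dga whose homology is $\Ext^*_{\QQ[v_n]}(\QQ,\QQ)\iso\Lambda_\QQ\langle\xi\rangle$, exterior on one generator $\xi$ in degree $1-2p^n$. Since $\xi$ lies in an odd, negative degree, the exterior product admits no nonzero correction and there is no room for higher $\EE_1$-coherences, so $\End_{\QQ[v_n]}(\QQ)$ is formal: equivalent as an $\EE_1$-$\QQ$-algebra, hence as a rational dga, to $\Lambda_\QQ\langle\xi\rangle$ with zero differential. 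Substituting and using that $\otimes_{\QQ[v_n]}\Lambda_\QQ\langle\xi\rangle$ sets $v_n=0$ gives
\begin{equation*}
    \A{n-1}_\QQ\we\BP{n}_\QQ\otimes_{\QQ[v_n]}\Lambda_\QQ\langle\xi\rangle\we\QQ[v_1,\dots,v_{n-1}]\otimes_\QQ\Lambda_\QQ\langle\xi\rangle
\end{equation*}
as $\EE_1$-rings, with $\xi=\epsilon_{1-2p^n}$; the right-hand side is a commutative rational dga, and~\cite{shipley-hz}*{Theorem~1.2} turns this equivalence into the desired structure.

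The main obstacle is the formality input of the third step — equivalently, that the $\EE_1$-ring $\A{n-1}_\QQ$ is the trivial square-zero extension of $\BP{n-1}_\QQ$ by the symmetric bimodule $\Sigma^{1-2p^n}\BP{n-1}_\QQ$, carrying no exotic multiplication and no nontrivial higher structure. To see this one checks that every map which could carry such data lies in a trivial group: for $M=\Sigma^{1-2p^n}\BP{n-1}_\QQ$ one has $M^{\otimes_{\BP{n-1}_\QQ}k}\we\Sigma^{k(1-2p^n)}\BP{n-1}_\QQ$, so each mapping spectrum $\Map_{\BP{n-1}_\QQ}(M^{\otimes k},M)$ has homotopy $\pi_*\BP{n-1}_\QQ$ reindexed into degrees of parity opposite to the one in which a $k$-ary operation would appear, because $\pi_*\BP{n-1}_\QQ$ is concentrated in even non-negative degrees while $1-2p^n$ is odd; alternatively one verifies directly that the relevant Hochschild cohomology groups $\HH^{i,2-i}$ of the formal homotopy ring vanish. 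Once this rigidity is established, everything else is the bookkeeping above.
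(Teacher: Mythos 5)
Your proposal is correct in outline but takes a genuinely different route from the paper. The paper works entirely with explicit models: it realizes $\A{n-1}_\QQ$ as the endomorphism dga of the cone $X=\cone(v_n)$ over the formal commutative dga $\BP{n}_\QQ$, writes that endomorphism dga as a graded matrix algebra $\Mrm_2(\pi_*\BP{n}_\QQ)$ with an explicit differential, and then exhibits a commutative sub-dga $Z$ of cycles (those in which $v_n$ does not appear) whose inclusion is a quasi-isomorphism; commutativity of $Z$ falls out because $\pi_*\BP{n}_\QQ$ is concentrated in even degrees while $\epsilon_{1-2p^n}$ sits in odd degree, so the two relevant matrix entries are never simultaneously nonzero. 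You instead factor the computation through base change along $\QQ[v_n]\to\BP{n}_\QQ$ and a one-variable Koszul duality, reducing everything to the formality of $\End_{\QQ[v_n]}(\QQ)$, which your degree count ($k|\xi|+k-2$ never equals $0$ or $|\xi|$ for $k\geq 3$) correctly establishes. Your approach is more structural and makes explicit the formality/triviality of the square-zero extension, which the paper also needs downstream (its HKR computation treats $\A{n-1}_\QQ$ as $\Sym$ of a module) and obtains for free from its explicit model; the paper's argument is more elementary and avoids the one point where your write-up is glib, namely the substitution of $\Lambda_\QQ\langle\xi\rangle$ for $\End_{\QQ[v_n]}(\QQ)$ \emph{inside the relative tensor product over} $\QQ[v_n]$. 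That step requires the formality equivalence to be one of $\QQ[v_n]$-algebras, with the target an algebra via $\QQ[v_n]\to\QQ$; your phrase ``sets $v_n=0$'' assumes this. It is true --- the central map must kill $v_n$ since $\pi_{2p^n-2}\End_{\QQ[v_n]}(\QQ)=0$, and the coherence can be supplied, e.g., by using the explicit Koszul-complex model of the endomorphism dga --- but at that point you are essentially reconstructing the paper's matrix computation, so this deserves a sentence rather than silence. With that caveat addressed, your argument is complete and yields the same formal commutative dga $\QQ[v_1,\ldots,v_{n-1}]\otimes\Lambda_\QQ\langle\epsilon_{1-2p^n}\rangle$ identified in the paper.
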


\begin{proof}
    For simplicity, let $d=2p^n-2$.
    As in the proof above, we can assume that $\A{n-1}_\QQ$ is the endomorphism dga of the cone
    $X$ of $v_n$. As a graded algebra,
    \begin{equation*}
        \A{n-1}_\QQ=\bigoplus_k\Hom_{\pi_*\BP{n}_\QQ}^k(X,X),
    \end{equation*}
    where the degree $k$ part consists of homogeneous degree $k$ maps of graded $\pi_*\BP{n}_\QQ$-modules.
    Since $X$ is isomorphic to
    $\pi_*\BP{n}_\QQ\oplus\pi_*\Sigma^{d+1}\BP{n}_{\QQ}$ as a graded
    module, it follows that $\A{n-1}_\QQ$ as a graded algebra is isomorphic to the graded matrix ring
    $\Mrm_2(\pi_*\BP{n}_\QQ)$, determined by letting
    \begin{equation*}
        \begin{pmatrix}
            0   &   1\\
            0   &   0
        \end{pmatrix}
    \end{equation*}
    be in degree $-d-1=1-2p^n$. So, an element of $\A{n-1}_\QQ$ of degree $k$ is represented by an element
    \begin{equation*}
        \begin{pmatrix}
            a&b\\
            c&d
        \end{pmatrix}
    \end{equation*}
    where $a,d\in\pi_k\BP{n}_\QQ$, the element $b$ is in $\pi_{k+d+1}\BP{n}_\QQ$, and
    $c\in\pi_{k-d-1}\BP{n}_\QQ$. The differential on $\A{n-1}_\QQ$ is defined by the equation
    \begin{equation*}
        d(f)=d_X\circ f-(-1)^k f\circ d_{X}
    \end{equation*}
    if $f$ is homogeneous of degree $k$. With this convention,
    \begin{equation*}
        d\left(\begin{pmatrix}a&b\\c&d\end{pmatrix}\right)=\begin{pmatrix}v_nc&v_nd\\0&0\end{pmatrix}-(-1)^k\begin{pmatrix}0&av_n\\0
            & cv_n\end{pmatrix}=\begin{pmatrix}v_nc&v_nd-(-1)^kav_n\\0&-(-1)^kcv_n\end{pmatrix}
    \end{equation*}
    for a homogeneous element $f$ of degree $k$, written as an element of $\Mrm_2(\pi_*\BP{n}_\QQ)$.

    Since $\pi_*\BP{n}_\QQ$ is commutative and $v_n$ is a regular homogeneous element,
    the cycles of degree $k$ in $\A{n-1}_\QQ$ are all of the form
    \begin{equation*}
        \begin{pmatrix}
            a   &   b\\
            0   &   (-1)^ka
        \end{pmatrix}.
    \end{equation*}
    Let $Z$ be the graded subalgebra of $\A{n-1}_\QQ$ given by cycles such that
    $v_n$ does not divide either $a$ or $b$. This graded submodule when equipped with the trivial
    differential is a dg-submodule (over $\QQ$ or $\BP{n-1}_\QQ$),
    and the inclusion respects multiplication and is unital; that is, it is a dg-subalgebra
    of $\A{n-1}_\QQ$.
    Because of the fact that $\pi_*\BP{n}_\QQ$ is concentrated in even degrees, and as $d+1$
    is odd, $a$ and $b$ can never be simultaneously non-zero. It follows from this that $Z$
    is in fact a commutative differential graded algebra. Now, we claim that the inclusion
    $Z\rightarrow \A{n-1}_\QQ$ is a quasi-isomorphism. Indeed, the map in homology is clearly
    injective, since all boundaries in $\A{n-1}_\QQ$ contain a $v_n$-term, and it is surjective
    because all cycles in which a $v_n$-term appears must be boundaries (using again the fact that
    $a$ and $b$ in the notation above are not simultaneously non-zero). As $Z$ is a
    commutative dga, the proposition follows.
\end{proof}

\begin{remark}
    An alternative way of proving the results in this section is via Goerss--Hopkins
    obstruction theory. The obstruction groups for the existence of $\EE_\infty$-ring structures on $\A{n-1}_\QQ$ lifting the square-zero structure on coefficients vanish,
    while the groups parameterizing choices of lifts are zero except at $p=2$.
\end{remark}

\section{Rognes' question}\label{sec:arc}

In this section we give a negative answer to the question of Rognes for $n>1$ at all primes $p$.
Note that while we work with the non-$p$-completed Brown--Peterson
and Johnson--Wilson spectra for notational simplicity, no alterations are needed in the argument to handle the
$p$-complete case.

\begin{question}[Non-$p$-complete Rognes question]
    For $n>0$, is the sequence
    \begin{equation}\label{eq:arc1}
        \K(\BP{n-1})\rightarrow\K(\BP{n})\rightarrow\K(\Erm(n))
    \end{equation}
    a fiber sequence of connective spectra, where $\K(\BP{n-1})\rightarrow\K(\BP{n})$ is the transfer map?
\end{question}

An affirmative answer would
identify the fiber of a localization map in $K$-theory. Our
earlier results allow us to do this unconditionally when the localization comes from a
reasonable localization of $\EE_1$-rings. In the case of truncated Brown-Peterson spectra, we get the following result.

\begin{theorem}\label{thm:fiber1}
    Fix $n>0$, and let $\A{n-1}=\End_{\BP{n}}(\BP{n-1})^{\op}$. There is a fiber sequence
    \begin{equation*}
        \Kloc(\A{n-1})\rightarrow\Kloc(\BP{n})\rightarrow\Kloc(\Erm(n))
    \end{equation*}
    of nonconnective $\K$-theory spectra.
\end{theorem}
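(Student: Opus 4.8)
The plan is to deduce this as a direct application of the Morita-theoretic localization result, Theorem~\ref{thm:kfiber}. Take $A=\BP{n}$, which is an $\EE_1$-ring with one of the $\EE_1$-algebra structures over $\BPP$ fixed in Section~\ref{sec:tbps}, and let $r=v_n\in\pi_{2p^n-2}\BP{n}$ with $S=\{1,v_n,v_n^2,\ldots\}$. Since $\pi_*\BP{n}\iso\ZZ_{(p)}[v_1,\ldots,v_n]$ is graded-commutative, the right Ore condition for $S$ is automatic: requirement (1) holds by commutativity, and requirement (2) holds because $v_n$ is a non-zero-divisor in $\pi_*\BP{n}$. Hence Proposition~\ref{prop:ore} applies and $S^{-1}\BP{n}$ exists with $\pi_*S^{-1}\BP{n}\iso v_n^{-1}\pi_*\BP{n}=\ZZ_{(p)}[v_1,\ldots,v_{n-1},v_n^{\pm 1}]=\pi_*\Erm(n)$. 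As $\Erm(n)$ is by construction obtained from $\BP{n}$ by inverting $v_n$, the localization $\BP{n}\rightarrow S^{-1}\BP{n}$ is identified with $\BP{n}\rightarrow\Erm(n)$.

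Next I would identify the generator $M$ appearing in Theorem~\ref{thm:kfiber}, which by definition is the cofiber of $v_n\colon\Sigma^{2p^n-2}\BP{n}\rightarrow\BP{n}$ taken in right $\BP{n}$-modules. By the construction recalled at the end of Section~\ref{sec:tbps}, the cofiber sequence
\[
    \Sigma^{2p^n-2}\BP{n}\xrightarrow{v_n}\BP{n}\rightarrow\BP{n-1}
\]
of right $\BP{n}$-modules exhibits $\BP{n-1}$ as this cofiber (and as a compact right $\BP{n}$-module), so $M\we\BP{n-1}$ and therefore $\End_{\BP{n}}(M)^{\op}=\End_{\BP{n}}(\BP{n-1})^{\op}=\A{n-1}$.

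Feeding these two identifications into the conclusion of Theorem~\ref{thm:kfiber} yields the fiber sequence
\[
    \Kloc(\A{n-1})\rightarrow\Kloc(\BP{n})\rightarrow\Kloc(\Erm(n)),
\]
which in the stable $\infty$-category of spectra is the same datum as a cofiber sequence, giving the claim. There is no genuine obstacle here: the only points requiring attention are the (immediate) verification of the Ore condition, the identification of $S^{-1}\BP{n}$ with $\Erm(n)$, and the identification of the Koszul-type cofiber $M$ with $\BP{n-1}$ — all of which are built into the definitions of these spectra from Section~\ref{sec:tbps}. The substantive content was already carried out in the proof of Theorem~\ref{thm:kfiber} (via Proposition~\ref{prop:ore} and Corollary~\ref{cor:kmor}).
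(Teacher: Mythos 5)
Your proposal is correct and follows exactly the paper's own proof: verify the right Ore condition for $\{1,v_n,v_n^2,\ldots\}$ using graded-commutativity of $\pi_*\BP{n}$, identify the localization with $\Erm(n)$ and the Koszul cofiber with $\BP{n-1}$, and then invoke Theorem~\ref{thm:kfiber}. Nothing is missing.
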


\begin{proof}
    Since the homotopy ring $\pi_*\BP{n}\we\ZZ_{(p)}[v_1,\ldots,v_n]$ is graded-commutative,
    the multiplicative subset generated by $v_n$ trivially
    satisfies the right Ore condition. The localization is nothing other than the
    Johnson--Wilson spectrum  $\Erm(n)$, and the cofiber of
    $\Sigma^{2p^n-2}\BP{n}\xrightarrow{v_n}\BP{n}$ is the spectrum $\BP{n-1}$,
    viewed as a right $\BP{n}$-module. The theorem follows now from Theorem~\ref{thm:kfiber}.
\end{proof}

The transfer map in the statement of the question is obtained by viewing $\BP{n-1}$ as a perfect right $\BP{n}$-module.

\begin{lemma}
    There is a commutative diagram
    \begin{equation*}
        \xymatrix{
            \Kloc(\A{n-1})\ar[r]  &   \Kloc(\BP{n})\ar[r] & \Kloc(\Erm(n))\\
            \Kloc(\BP{n-1})\ar[u]\ar[ur]&&
        }
    \end{equation*}
    where the diagonal arrow is the transfer map and the vertical map is induced from the
    algebra map $\BP{n-1}\rightarrow\A{n-1}$.
\end{lemma}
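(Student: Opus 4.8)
The plan is to realize the asserted triangle as the image under the functor $\Kloc\colon\Cat_\infty^\perf\to\Sp$ of a triangle of exact functors between small idempotent-complete stable $\infty$-categories that commutes up to natural equivalence; since $\Kloc$ is a functor, applying it then produces the claimed commutative diagram of spectra. So the work is entirely on the level of $\Cat_\infty^\perf$, before taking $K$-theory.

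First I would set up the three functors. Write $S=\{1,v_n,v_n^2,\dots\}$ and let $\Cscr=\Mod_{\BP{n}}^{\mathrm{Nil}(S),\omega}\subseteq\Mod_{\BP{n}}^\omega$ be the stable subcategory of $S$-nilpotent compact $\BP{n}$-modules. By Theorem~\ref{thm:kfiber} (and its proof) this is the thick subcategory generated by $\BP{n-1}$, and Schwede--Shipley gives an equivalence $\Mod_{\A{n-1}}^\omega\we\Cscr$ under which the map $\Kloc(\A{n-1})\to\Kloc(\BP{n})$ appearing in Theorem~\ref{thm:fiber1} is exactly $\Kloc$ of the composite $\Mod_{\A{n-1}}^\omega\we\Cscr\hookrightarrow\Mod_{\BP{n}}^\omega$. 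On the other side, restriction of scalars along the $\EE_1$-ring map $\BP{n}\to\BP{n-1}$ defines an exact functor $\rho\colon\Mod_{\BP{n-1}}^\omega\to\Mod_{\BP{n}}^\omega$ — well defined on compacts because $\BP{n-1}$ is a perfect $\BP{n}$-module — and by definition $\Kloc(\rho)$ is the transfer. Since $\rho(\BP{n-1})=\BP{n-1}\in\Cscr$ and $\Cscr$ is thick, $\rho$ factors as $\Mod_{\BP{n-1}}^\omega\xrightarrow{\bar\rho}\Cscr\hookrightarrow\Mod_{\BP{n}}^\omega$. Thus the triangle of the lemma will follow once $\bar\rho$, transported along $\Cscr\we\Mod_{\A{n-1}}^\omega$, is identified with the base-change functor $-\otimes_{\BP{n-1}}\A{n-1}\colon\Mod_{\BP{n-1}}^\omega\to\Mod_{\A{n-1}}^\omega$ induced by the algebra map $\BP{n-1}\to\A{n-1}$.

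To carry out that identification, observe that $\bar\rho$ is (after $\Ind$-completion) a colimit-preserving functor between module $\infty$-categories sending the free rank-one module $\BP{n-1}$ to the compact generator $\BP{n-1}\in\Cscr$; any such functor is equivalent to base change along the induced map on endomorphism $\EE_1$-rings, here $\End_{\BP{n-1}}(\BP{n-1})^{\op}\to\End_{\Cscr}(\BP{n-1})^{\op}=\End_{\BP{n}}(\BP{n-1})^{\op}=\A{n-1}$. But this induced map is, by construction, precisely the algebra map $\BP{n-1}\to\A{n-1}$ of Section~\ref{sec:tbps}, which was itself defined via the forgetful functor $\Mod_{\BP{n-1}}\to\Mod_{\BP{n}}$ on the endomorphisms of $\BP{n-1}$.

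I expect the main obstacle to be exactly this last step: pinning down the naturality of the Schwede--Shipley equivalence in the source category, which requires careful bookkeeping of the bimodule structures and the various $(-)^{\op}$'s — concretely, checking that the $(\BP{n-1},\BP{n})$-bimodule underlying the composite $\Mod_{\BP{n-1}}\to\Mod_{\A{n-1}}\to\Mod_{\BP{n}}$ is $\BP{n-1}$ with its evident left action and the right $\BP{n}$-action through $\BP{n}\to\BP{n-1}$, i.e. the bimodule that computes $\rho$. (Lemma~\ref{lem:bimodule}, which records the underlying left $\BP{n-1}$-module, provides a convenient consistency check here.) Once this compatibility is established, applying the localizing invariant $\Kloc$ to the commuting triangle $\Mod_{\BP{n-1}}^\omega\to\Mod_{\A{n-1}}^\omega\hookrightarrow\Mod_{\BP{n}}^\omega$ yields the diagram, with the vertical map induced by $\BP{n-1}\to\A{n-1}$ and the diagonal equal to the transfer.
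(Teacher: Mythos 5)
Your proposal is correct and follows essentially the same route as the paper: factor the restriction-of-scalars functor $\Mod_{\BP{n-1}}^\omega\to\Mod_{\BP{n}}^\omega$ through the subcategory of $v_n$-nilpotent compact modules, identify that subcategory with $\Mod_{\A{n-1}}^\omega$ via Schwede--Shipley, and apply the localizing invariant to the resulting commuting triangle of exact functors. The only difference is that you spell out the Morita-theoretic identification of the factored functor with base change along $\BP{n-1}\to\A{n-1}$, a point the paper's proof leaves implicit.
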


\begin{proof}
    There is a commutative diagram
    \begin{equation*}
        \xymatrix{
            \Mod_{\A{n-1}}\ar[r]    &   \Mod_{\BP{n}}\\
            \Mod_{\BP{n-1}}.\ar[u]\ar[ur]&
        }
    \end{equation*}
    Here the horizontal arrow is the fully faithful functor arising from the equivalence of
    $v_n$-nilpotent $\BP{n}$-modules and right $\A{n-1}$-modules, the diagonal arrow is
    the forgetful functor along the map of $\EE_1$-rings $\BP{n}\rightarrow\BP{n-1}$, and the
    vertical map is induced since the diagonal map lands in the subcategory of
    $v_n$-nilpotent $\BP{n}$-modules. All three functors preserve compact objects, and the
    maps in $K$-theory in the statement of the lemma are those induced by these three
    functors.
\end{proof}

The next result is a trivial consequence of the lemma, but the observation is at the heart
of our approach to the question of Rognes.

\begin{lemma}
    Suppose that~\eqref{eq:arc1} is a fiber sequence of connective spectra, then
    $\K(\BP{n-1})\rightarrow\K(\A{n-1})$ is an equivalence.
\end{lemma}

By the theorem of Blumberg and Mandell~\cite{blumberg-mandell}, when $n=1$ the
question has a positive answer, and hence the
theorem applies.
We state the analogous result for complex $K$-theory. Let $A=\End_{\ku}(\Hrm\ZZ)$. Then, the methods above give a
fiber sequence
\begin{equation*}
    \Kloc(A)\rightarrow\Kloc(\ku)\rightarrow\Kloc(\KU).
\end{equation*}
On the other hand, Blumberg and Mandell showed in~\cite{blumberg-mandell} that at the level
of connective $K$-theory, one has a fiber sequence
\begin{equation*}
    \K(\Hrm\!\ZZ)\rightarrow\K(\ku)\rightarrow\K(\KU)
\end{equation*}
of connective spectra.
It follows that $\K(\Hrm\!\ZZ)\rightarrow\K(A)$ is an equivalence.
In this case, $A$ has non-zero homotopy groups $\pi_0A\iso\ZZ$ and
$\pi_{-3}A\iso\ZZ\cdot\epsilon_{-3}$.

It is not difficult using group completion techniques to show that when $n>1$ the map
$$\Kloc_i(\BP{n-1})\rightarrow\Kloc_i(\tau_{\geq 0}\A{n-1})$$ is not an isomorphism for
general $i>0$.
However, because it seems difficult to analyze the map $\Kloc(\tau_{\geq
0}\A{n-1})\rightarrow\Kloc(\A{n-1})$,
this does not directly solve the Rognes' question. The strategy of the
main theorem will be to compute classes in the image of the trace map to topological
Hochschild homology in order to conclude that there are positive degree classes in
$\K(\A{n-1})$
not in the image of $\K(\BP{n-1})\rightarrow\K(\A{n-1})$. This of
course implies that the question of Rognes (including the $p$-complete versions) has a
negative answer for $n>1$.

Now we come to the main theorem of the paper.

\begin{theorem}
    The transfer map $\K(\BP{n-1})\rightarrow\K(\A{n-1})$ is
    not an equivalence when $n>1$. In particular, for $n>1$,
    $$\K(\BP{n-1})\rightarrow\K(\BP{n})\rightarrow\K(\Erm(n))$$ is not a fiber
    sequence of connective spectra.
\end{theorem}

\begin{proof}
    In view of the following commutative diagram, 
    \begin{equation*}
        \xymatrix{
            \K(\BP{n-1})\ar[r]\ar[d] & \HH^{\Hrm\QQ}\left(\BP{n-1}_\QQ\right)\ar[d]\\
            \K(\A{n-1})\ar[r] & \HH^{\Hrm\QQ}(\A{n-1}_\QQ),
        }
    \end{equation*}
    this is an immediate consequence of the next lemma.
\end{proof}

By Proposition~\ref{prop:aneinfinity}, $\A{n-1}_\QQ$ admits the structure of an
$\EE_\infty$-$\Hrm\QQ$-algebra. Since the homotopy ring $\pi_*\A{n-1}_\QQ$ is a
free graded-commutative $\QQ$-algebra, it follows that $\A{n-1}_\QQ$ is
equivalent to a free $\EE_\infty$-$\Hrm\QQ$-algebra, so that
Corollary~\ref{cor:qhkr} applies and gives
\begin{equation}
    \HH^{\Hrm\QQ}_*\left(\A{n-1}_\QQ\right)\iso\QQ[v_1,\ldots,v_{n-1},\delta_{2-2p^n}]\otimes\Lambda_\QQ\langle\sigma_1,\ldots,\sigma_{n-1},\epsilon_{1-2p^n}\rangle,
\end{equation}
where the degree of $\sigma_i$ is $2p^i-1$ and the degree of $\epsilon_{1-2p^n}$ is $1-2p^n$.

\begin{lemma}
    If $x=v_1^{a_1}\cdots v_{n-1}^{a_{n-1}}\epsilon_{1-2p^n}$ is a monomial in $\pi_*\A{n-1}$
    of positive total degree, i.e., $\sum_{i=1}^{n-1}a_i(p^i-1) \ge p^n$, then the class
    \begin{equation*}
        v_1^{a_1}\cdots v_{n-1}^{a_{n-1}}\delta_{2-2p^n}+\sum_{i=1}^{n-1}a_i v_1^{a_1}\cdots v_i^{a_i-1}\cdots
        v_{n-1}^{a_{n-1}}\sigma_i\epsilon_{1-2p^n}
    \end{equation*}
    is in the image of $\K(\A{n-1})\rightarrow\HH^{\Hrm\QQ}(\A{n-1}_\QQ)$ and not
    in the image of $\HH^{\Hrm\QQ}(\BP{n-1}_\QQ)\rightarrow\HH^{\Hrm\QQ}(\A{n-1}_\QQ)$.
\end{lemma}

\begin{proof}
    Consider the commutative diagram
    \begin{equation*}
        \xymatrix{
            \Sigma^\infty\BGL_1(\BP{n-1})_+\ar[r]\ar[d] &   \K(\BP{n-1})\ar[r]\ar[d] & \HH^{\Hrm\QQ}\left(\BP{n-1}_\QQ\right)\ar[d]\\
            \Sigma^\infty\BGL_1(\A{n-1})_+\ar[r]              &    \K(\A{n-1})\ar[r] &
            \HH^{\Hrm\QQ}(\A{n-1}_\QQ).
        }
    \end{equation*}
    Using Corollary~\ref{cor:qhkr}, we see that
    the right-hand vertical map is in fact an inclusion of algebras
    \begin{equation*}
        \QQ[v_1,\ldots,v_{n-1}]\otimes\Lambda_\QQ\langle\sigma_1,\ldots,\sigma_{n-1}\rangle\rightarrow\QQ[v_1,\ldots,v_{n-1},\delta_{2-2p^n}]\otimes\Lambda_\QQ\langle\sigma_1,\ldots,\sigma_{n-1},\epsilon_{1-2p^n}\rangle.
    \end{equation*}
    If $x$ has positive degree $d$, let $y$ denote the class associated to $x$ in
    $$\pi_{d+1}\BGL_1(\A{n-1})\iso\pi_{d}\A{n-1}.$$
    By Corollary~\ref{cor:thhderivation}, the class $y$ maps via the trace map
    and rationalization to the non-zero element
    \begin{equation*}
        v_1^{a_1}\cdots v_{n-1}^{a_{n-1}}\delta_{2-2p^n}+\sum_{i=1}^{n-1}a_i v_1^{a_1}\cdots v_i^{a_i-1}\cdots
        v_{n-1}^{a_{n-1}}\sigma_i\epsilon_{1-2p^n}
    \end{equation*}
    of $\HH^{\Hrm\QQ}_{d+1}(\A{n-1}_\QQ)$. Because of the
    monomials involving $\epsilon_{1-2p^n}$ and $\delta_{2-2p^n}$, this class is not in the
    image of $\HH^{\Hrm\QQ}(\BP{n-1}_\QQ)$. Of course, since its image in
    $\HH^{\Hrm\QQ}_{d+1}(\A{n-1}_\QQ)$ is non-zero, the class $y$ must map to a
    non-zero class in $\K_{d+1}(\A{n-1})$. This class cannot be in the image of
    $\K(\BP{n-1})\rightarrow\K(\A{n-1})$.
\end{proof}

The proof of the theorem requires both the negative degree class $\epsilon_{1-2p^n}$ and
the positive degree class $v_1$ in the homotopy groups of $\A{n-1}$. When $n=0,1$, there is no
$v_1$, which is why this method does not contradict the earlier theorems of Quillen and
Blumberg--Mandell.

\begin{remark}
    Recent work of Blumberg and Mandell~\cite{blumberg-mandell-thh2} provides a different approach to the
    homotopy groups of $\K(\SS)$ in the spirit of the Ausoni--Rognes program, in which analyzing
    the $K$-theory of $\Erm(n)$ is skipped in favor of looking at $\Erm_n$ directly.
    Blumberg and Mandell
    prove as an extension of their earlier work in~\cite{blumberg-mandell} that there is a
    fiber sequence
    \begin{equation}\label{eq:four}
        \K(\WW[\![u_1,\ldots,u_{n-1}]\!])\rightarrow\K(\BPP_n)\rightarrow\K(\Erm_n)
    \end{equation}
    of connective spectra
    for all $n>0$, where $\BPP_n$ denotes the connective cover of $\Erm_n$, $\WW$ is the $p$-typical Witt ring, and the $u_i$ are in degree $0$.
    Note, however, that the Ausoni--Rognes program in principle allows a computation of $\K(\BP{n})$,
    whereas $\K(\BPP_n)$ is more difficult to compute using their techniques.
\end{remark}

In the end, the successful d\'evissage-type results of Quillen~\cite{quillen},
Blumberg--Mandell~\cites{blumberg-mandell,blumberg-mandell-thh2}, and
Barwick--Lawson~\cite{barwick-lawson} can all be expressed in terms of Barwick's
theorem of the heart~\cite{barwick-heart}*{Theorem~8.7}. For example, consider
the $\EE_\infty$-ring spectrum $\ku$ and its localization $\KU=\ku[\beta^{-1}]$.
In the notation of Section~\ref{sub:loce1}, there is an exact sequence
$\Mod_{\ku}^{\mathrm{Nil}(S),\omega}\rightarrow\Mod_\ku^\omega\rightarrow\Mod_{\KU}^\omega$,
where $S=\{1,\beta,\beta^2,\ldots\}$. The natural, Postnikov $t$-structure on
$\Mod_\ku$ is \emph{not} bounded, but it restricts to a bounded $t$-structure
on $\Mod_{\ku}^{\mathrm{Nil}(S),\omega}$ with heart the category of finitely
generated abelian groups.

Barwick's theorem of the heart says that the connective $K$-theory of a stable
$\infty$-category with a bounded $t$-structure is equivalent to the connective
$K$-theory of the heart. Thus, we obtain the fiber sequence
$\K(\ZZ)\rightarrow\K(\ku)\rightarrow\K(\KU)$ of connective spectra. 
The same argument establishes the fiber sequences in~\eqref{eq:four}.
In fact, Barwick's theorem was recently extended
in~\cite{antieau-gepner-heller} to negative $K$-theory when
the heart is noetherian. Thus, there are fiber sequences
$$\Kloc(\ZZ)\rightarrow\Kloc(\ku)\rightarrow\Kloc(\KU)$$ and
$$\Kloc(\WW[\![u_1,\ldots,u_{n-1}]\!])\rightarrow\Kloc(\BPP_n)\rightarrow\Kloc(\Erm_n)$$
of \emph{nonconnective} $K$-theory spectra. This amounts to proving
that the negative $K$-theory of $\KU$ and $\Erm_n$ vanish. On the other hand,
the following problem is open.

\begin{question}
    Do the negative $K$-groups of $\Erm(n)$ vanish for $n\geq 2$?
\end{question}

Since $\pi_0\BP{n}\iso\ZZ_{(p)}$ is regular and noetherian, $\K_{-i}(\BP{n})=0$
for $i\geq 1$ by~\cite{bgt1}*{Theorem~9.53}. So, we may ask well as about the negative $K$-theory of
$\A{n-1}$. The question of Rognes can be viewed as asking for some
structure on $\Mod_{\A{n-1}}^\omega$ that generalizes that of a $t$-structure,
with generalized heart equivalent to $\Mod_{\BP{n-1}}^\omega$,
as well as for a generalization of the theorem of the heart. The main theorem
of this paper shows that this is too much to hope for.

\begin{bibdiv}
\begin{biblist}

% \bib{abramovich-olsson-vistoli}{article}{
%     author={Abramovich, Dan},
%     author={Olsson, Martin},
%     author={Vistoli, Angelo},
%     title={Tame stacks in positive characteristic},
%     journal={Ann. Inst. Fourier (Grenoble)},
%     volume={58},
%     date={2008},
%     number={4},
%     pages={1057--1091},
%     issn={0373-0956},
% %     review={\MR{2427954 (2009c:14002)}},
% }
% % 
% % \bib{abramovich-vistoli}{article}{
% %     author={Abramovich, Dan},
% %     author={Vistoli, Angelo},
% %     title={Compactifying the space of stable maps},
% %     journal={J. Amer. Math. Soc.},
% %     volume={15},
% %     date={2002},
% %     number={1},
% %     pages={27--75},
% %     issn={0894-0347},
% % %     review={\MR{1862797 (2002i:14030)}},
% % %     doi={10.1090/S0894-0347-01-00380-0},
% % }

\bib{adams}{book}{
    author={Adams, J. F.},
    title={Stable homotopy and generalised homology},
    note={Chicago Lectures in Mathematics},
    publisher={University of Chicago Press, Chicago, Ill.-London},
    date={1974},
    pages={x+373},
%     review={\MR{0402720 (53 \#6534)}},
}
% 
% \bib{atjlss}{article}{
%     author={Alonso Tarr{\'{\i}}o, Leovigildo},
%     author={Jerem{\'{\i}}as L{\'o}pez, Ana},
%     author={Souto Salorio, Mar{\'{\i}}a Jos{\'e}},
%     title={Bousfield localization on formal schemes},
%     journal={J. Algebra},
%     volume={278},
%     date={2004},
%     number={2},
%     pages={585--610},
%     issn={0021-8693},
% %     review={\MR{2071654 (2005g:14037)}},
% %     doi={10.1016/j.jalgebra.2004.02.030},
% }
%     
% % \bib{abg}{article}{
% %     author = {Ando, Matthew},
% %     author = {Blumberg, Andrew J.},
% %     author = {Gepner, David},
% %     title = {Parameterized homotopy theory and twisted Umkehr maps},
% %     journal = {ArXiv e-prints},
% %     eprint = {http://arxiv.org/abs/1112.2203},
% %     year = {2011},
% % }
% 
\bib{angeltveit}{article}{
    author={Angeltveit, Vigleik},
    title={Topological Hochschild homology and cohomology of $A_\infty$
    ring spectra},
    journal={Geom. Topol.},
    volume={12},
    date={2008},
    number={2},
    pages={987--1032},
    issn={1465-3060},
%     review={\MR{2403804 (2009e:55012)}},
%     doi={10.2140/gt.2008.12.987},
}
% % \bib{antieau-thesis}{thesis}{
% %     author = {Antieau, Benjamin},
% %     title = {The spectral index of Brauer classes},
% %     note = {Ph.D. thesis, UIC (2010), available at http://www.math.ucla.edu/\textasciitilde antieau/},
% % }
% 
\bib{ag}{article}{
    author = {Antieau, Benjamin},
    author = {Gepner, David},
    title = {Brauer groups and \'etale cohomology in derived algebraic geometry},
    journal = {Geom. Topol.},
    volume = {18},
    year = {2014},
    number = {2},
    pages = {1149--1244},
}
% 
% % \bib{agg}{article}{
% %     author = {Antieau, Benjamin},
% %     author = {Gepner, David},
% %     author = {G\'omez, Jos\'e Manuel},
% %     title = {Actions of Eilenberg-MacLane spaces on K-theory spectra and uniqueness of twisted K-theory},
% %     note = {To appear in Trans. Amer. Math. Soc.},
% % }

\bib{antieau-gepner-heller}{article}{
    author = {Antieau, Benjamin},
    author = {Gepner, David},
    author = {Heller, Jeremiah},
    title = {On the theorem of the heart in negative $K$-theory},
    journal = {ArXiv e-prints},
    eprint = {http://arxiv.org/abs/1610.07207},
    year = {2016},
}
% 
% % \bib{arinkin-gaitsgory}{article}{
% %     author = {Arinkin, D.},
% %     author = {Gaitsgory, D.},
% %     title = {Singular support of coherent sheaves, and the geometric Langlands conjecture},
% %   journal = {ArXiv e-prints},
% %      eprint = {http://arxiv.org/abs/1201.6343},
% %      year = {2012},
% % }
% 
% % \bib{artin}{article}{
% %     author={Artin, M.},
% %     title={Versal deformations and algebraic stacks},
% %     journal={Invent. Math.},
% %     volume={27},
% %     date={1974},
% %     pages={165--189},
% %     issn={0020-9910},
% % %     review={\MR{0399094 (53 \#2945)}},
% % }
% 
% % \bib{artin-mumford}{article}{
% %     author={Artin, M.},
% %     author={Mumford, D.},
% %     title={Some elementary examples of unirational varieties which are not
% %     rational},
% %     journal={Proc. London Math. Soc. (3)},
% %     volume={25},
% %     date={1972},
% %     pages={75--95},
% %     issn={0024-6115},
% % %     review={\MR{0321934 (48 \#299)}},
% % }
% 
% % \bib{auslander-goldman}{article}{
% % author={Auslander, Maurice},
% % author={Goldman, Oscar},
% % title={The Brauer group of a commutative ring},
% % journal={Trans. Amer. Math. Soc.},
% % volume={97},
% % date={1960},
% % pages={367--409},
% % issn={0002-9947},
% % % review={\MR{0121392 (22 \#12130)}},
% % }

\bib{ausoni-rognes}{article}{
    author={Ausoni, Christian},
    author={Rognes, John},
    title={Algebraic $K$-theory of topological $K$-theory},
    journal={Acta Math.},
    volume={188},
    date={2002},
    number={1},
    pages={1--39},
    issn={0001-5962},
%     review={\MR{1947457 (2004f:19007)}},
%     doi={10.1007/BF02392794},
}

\bib{barwick-q}{article}{
    author = {Barwick, Clark},
    title = {On the Q construction for exact {$\infty$}-categories},
    journal = {ArXiv e-prints},
    eprint = {http://arxiv.org/abs/1301.4725},
    year = {2013},
}

\bib{barwick-heart}{article}{
   author={Barwick, Clark},
   title={On exact $\infty$-categories and the theorem of the heart},
   journal={Compos. Math.},
   volume={151},
   date={2015},
   number={11},
   pages={2160--2186},
   issn={0010-437X},
%    review={\MR{3427577}},
%    doi={10.1112/S0010437X15007447},
}

\bib{barwick-highercats}{article}{
   author={Barwick, Clark},
   title={On the algebraic $K$-theory of higher categories},
   journal={J. Topol.},
   volume={9},
   date={2016},
   number={1},
   pages={245--347},
   issn={1753-8416},
%    review={\MR{3465850}},
%    doi={10.1112/jtopol/jtv042},
}

\bib{barwick-lawson}{article}{
    author = {Barwick, Clark},
    author = {Lawson, Tyler},
    title = {Regularity of structured ring spectra and localization in K-theory},
    journal = {ArXiv e-prints},
    eprint = {http://arxiv.org/abs/1402.6038},
    year = {2014},
}

\bib{barwick-rognes}{article}{
    author = {Barwick, Clark},
    author = {Rognes, John},
    title = {On the Q construction for exact {$\infty$}-categories},
    eprint = {http://dl.dropbox.com/u/1741495/papers/qconstr.pdf},
    note = {Accessed 19 June 2017},
    year = {2013},
}

\bib{basterra-mandell}{article}{
    author={Basterra, Maria},
    author={Mandell, Michael A.},
    title={The multiplication on BP},
    journal={J. Topol.},
    volume={6},
    date={2013},
    number={2},
    pages={285--310},
    issn={1753-8416},
%     review={\MR{3065177}},
%     doi={10.1112/jtopol/jts032},
}
\bib{bgt1}{article}{
    author={Blumberg, Andrew J.},
    author={Gepner, David},
    author={Tabuada, Gon{\c{c}}alo},
    title={A universal characterization of higher algebraic $K$-theory},
    journal={Geom. Topol.},
    volume={17},
    date={2013},
    number={2},
    pages={733--838},
    issn={1465-3060},
%     review={\MR{3070515}},
%     doi={10.2140/gt.2013.17.733},
}

\bib{bgt-endomorphisms}{article}{
   author={Blumberg, Andrew J.},
   author={Gepner, David},
   author={Tabuada, Gon\c{c}alo},
   title={$K$-theory of endomorphisms via noncommutative motives},
   journal={Trans. Amer. Math. Soc.},
   volume={368},
   date={2016},
   number={2},
   pages={1435--1465},
   issn={0002-9947},
%    review={\MR{3430369}},
%    doi={10.1090/tran/6507},
}

%  
% % \bib{bgt2}{article}{
% %     author = {{Blumberg}, A.~J.},
% %     author = {{Gepner}, D.},
% %     author = {{Tabuada}, G.},
% %     title    =    {Uniqueness of the multiplicative cyclotomic trace},
% %     journal = {ArXiv e-prints},
% %     eprint = {http://arxiv.org/abs/1103.3923},
% %     year = {2011},
% % }

\bib{blumberg-mandell}{article}{
    author={Blumberg, Andrew J.},
    author={Mandell, Michael A.},
    title={The localization sequence for the algebraic $K$-theory of topological $K$-theory},
    journal={Acta Math.},
    volume={200},
    date={2008},
    number={2},
    pages={155--179},
    issn={0001-5962},
%     review={\MR{2413133 (2009f:19003)}},
%     doi={10.1007/s11511-008-0025-4},
}

\bib{blumberg-mandell-thh2}{article}{
    author={Blumberg, Andrew J.},
    author={Mandell, Michael A.},
    title = {Localization for $THH(ku)$ and the topological Hochschild and cyclic homology of Waldhausen
        categories},
  journal = {ArXiv e-prints},
     eprint = {http://arxiv.org/abs/1111.4003},
     year = {2011},
}

\bib{blumberg-mandell-thh}{article}{
    author={Blumberg, Andrew J.},
    author={Mandell, Michael A.},
    title={Localization theorems in topological Hochschild homology and
    topological cyclic homology},
    journal={Geom. Topol.},
    volume={16},
    date={2012},
    number={2},
    pages={1053--1120},
    issn={1465-3060},
%     review={\MR{2928988}},
%     doi={10.2140/gt.2012.16.1053},
}

\bib{blumberg-mandell-ksphere}{article}{
    author={Blumberg, Andrew J.},
    author={Mandell, Michael A.},
    title = {The homotopy groups of the algebraic K-theory of the sphere spectrum},
  journal = {ArXiv e-prints},
     eprint = {http://arxiv.org/abs/1408.0133},
     year = {2014},
}

\bib{blumberg-mandell-tp}{article}{
    author={Blumberg, Andrew J.},
    author={Mandell, Michael A.},
    title = {Tate-Poitou duality and the fiber of the cyclotomic trace for the
    sphere spectrum},
  journal = {ArXiv e-prints},
     eprint = {http://arxiv.org/abs/1508.00014},
     year = {2015},
}

\bib{bokstedt-hsiang-madsen}{article}{
    author={B{\"o}kstedt, M.},
    author={Hsiang, W. C.},
    author={Madsen, I.},
    title={The cyclotomic trace and algebraic $K$-theory of spaces},
    journal={Invent. Math.},
    volume={111},
    date={1993},
    number={3},
    pages={465--539},
    issn={0020-9910},
%     review={\MR{1202133 (94g:55011)}},
%     doi={10.1007/BF01231296},
}
\bib{bokstedt-neeman}{article}{
    author={B{\"o}kstedt, Marcel},
    author={Neeman, Amnon},
    title={Homotopy limits in triangulated categories},
    journal={Compos. Math.},
    volume={86},
    date={1993},
    number={2},
    pages={209--234},
    issn={0010-437X},
%     review={\MR{1214458 (94f:18008)}},
}

\bib{dundas-mccarthy-hochschild}{article}{
    author={Dundas, Bj{\o}rn Ian},
    author={McCarthy, Randy},
    title={Topological Hochschild homology of ring functors and exact
    categories},
    journal={J. Pure Appl. Algebra},
    volume={109},
    date={1996},
    number={3},
    pages={231--294},
    issn={0022-4049},
%     review={\MR{1388700 (97i:19001)}},
%     doi={10.1016/0022-4049(95)00089-5},
}
% 
% % \bib{duskin}{article}{
% %     author={Duskin, John W.},
% %     title={The Azumaya complex of a commutative ring},
% %     conference={
% %     title={Categorical algebra and its applications},
% %     address={Louvain-La-Neuve},
% %     date={1987},
% %     },
% %     book={
% %     series={Lecture Notes in Math.},
% %     volume={1348},
% %     publisher={Springer},
% %     place={Berlin},
% %     },
% %     date={1988},
% %     pages={107--117},
% % %     review={\MR{975963 (90c:13003)}},
% % %     doi={10.1007/BFb0081352},
% % }
% 
% \bib{dwyer-palmieri}{article}{
%     author={Dwyer, W. G.},
%     author={Palmieri, J. H.},
%     title={The Bousfield lattice for truncated polynomial algebras},
%     journal={Homology Homotopy Appl.},
%     volume={10},
%     date={2008},
%     number={1},
%     pages={413--436},
%     issn={1532-0073},
% %     review={\MR{2426110 (2009e:18021)}},
% }
% 
% % \bib{ehkv}{article}{
% %     author={Edidin, Dan},
% %     author={Hassett, Brendan},
% %     author={Kresch, Andrew},
% %     author={Vistoli, Angelo},
% %     title={Brauer groups and quotient stacks},
% %     journal={Amer. J. Math.},
% %     volume={123},
% %     date={2001},
% %     number={4},
% %     pages={761--777},
% %     issn={0002-9327},
% % %     review={\MR{1844577 (2002f:14002)}},
% % }
%  
\bib{ekmm}{book}{
    author={Elmendorf, A. D.},
    author={Kriz, I.},
    author={Mandell, M. A.},
    author={May, J. P.},
    title={Rings, modules, and algebras in stable homotopy theory},
    series={Mathematical Surveys and Monographs},
    volume={47},
    note={With an appendix by M. Cole},
    publisher={American Mathematical Society},
    place={Providence, RI},
    date={1997},
    pages={xii+249},
    isbn={0-8218-0638-6},
    % review={\MR{1417719 (97h:55006)}},
}

\bib{hesselholt-madsen}{article}{
    author={Hesselholt, Lars},
    author={Madsen, Ib},
    title={On the $K$-theory of local fields},
    journal={Ann. of Math. (2)},
    volume={158},
    date={2003},
    number={1},
    pages={1--113},
    issn={0003-486X},
%     review={\MR{1998478 (2004k:19003)}},
%     doi={10.4007/annals.2003.158.1},
}

\bib{hkr}{article}{
    author={Hochschild, G.},
    author={Kostant, Bertram},
    author={Rosenberg, Alex},
    title={Differential forms on regular affine algebras},
    journal={Trans. Amer. Math. Soc.},
    volume={102},
    date={1962},
    pages={383--408},
    issn={0002-9947},
%     review={\MR{0142598}},
%     doi={10.2307/1993614},
}

\bib{jones}{article}{
    author={Jones, John D. S.},
    title={Cyclic homology and equivariant homology},
    journal={Invent. Math.},
    volume={87},
    date={1987},
    number={2},
    pages={403--423},
    issn={0020-9910},
%     review={\MR{870737 (88f:18016)}},
%     doi={10.1007/BF01389424},
}
\bib{loday}{book}{
    author={Loday, Jean-Louis},
    title={Cyclic homology},
    series={Grundlehren der Mathematischen Wissenschaften},
    volume={301},
    edition={2},
    publisher={Springer-Verlag, Berlin},
    date={1998},
    pages={xx+513},
    isbn={3-540-63074-0},
}
\bib{ha}{article}{
    author={Lurie, Jacob},
    title={Higher algebra},
    date={2016},
    eprint={http://www.math.harvard.edu/~lurie/},
    note={Version dated 10 March 2016},
}
% 

% \bib{mahowald-ravenel-shick}{article}{
%    author={Mahowald, Mark},
%    author={Ravenel, Douglas},
%    author={Shick, Paul},
%    title={The triple loop space approach to the telescope conjecture},
%    conference={
%       title={Homotopy methods in algebraic topology},
%       address={Boulder, CO},
%       date={1999},
%    },
%    book={
%       series={Contemp. Math.},
%       volume={271},
%       publisher={Amer. Math. Soc., Providence, RI},
%    },
%    date={2001},
%    pages={217--284},
% %    review={\MR{1831355 (2002g:55014)}},
% %    doi={10.1090/conm/271/04358},
% }

\bib{mccarthy-minasian}{article}{
    author={McCarthy, Randy},
    author={Minasian, Vahagn},
    title={H{KR} theorem for smooth {$S$}-algebras},
    journal={J. Pure Appl. Algebra},
    volume={185},
    date={2003},
    number={1-3},
    pages={239--258},
    issn={0022-4049},
}

\bib{mcclure-schwanzl-vogt}{article}{
    author={McClure, J.},
    author={Schw{\"a}nzl, R.},
    author={Vogt, R.},
    title={$THH(R)\cong R\otimes S^1$ for $E_\infty$ ring spectra},
    journal={J. Pure Appl. Algebra},
    volume={121},
    date={1997},
    number={2},
    pages={137--159},
    issn={0022-4049},
%     review={\MR{1473888 (98k:55010)}},
%     doi={10.1016/S0022-4049(97)00118-7},
}

\bib{neeman-ranicki}{article}{
    author={Neeman, Amnon},
    author={Ranicki, Andrew},
    title={Noncommutative localisation in algebraic $K$-theory. I},
    journal={Geom. Topol.},
    volume={8},
    date={2004},
    pages={1385--1425},
    issn={1465-3060},
%     review={\MR{2119300 (2005k:19006)}},
%     doi={10.2140/gt.2004.8.1385},
}
% 
% % \bib{orlov}{article}{
% %     author={Orlov, D. O.},
% %     title={Projective bundles, monoidal transformations, and derived categories of coherent sheaves},
% %     journal={Izv. Ross. Akad. Nauk Ser. Mat.},
% %     volume={56},
% %     date={1992},
% %     number={4},
% %     pages={852--862},
% %     issn={0373-2436},
% %     translation={
% %         journal={Russian Acad. Sci. Izv. Math.},
% %         volume={41},
% %         date={1993},
% %         number={1},
% %         pages={133--141},
% %         issn={1064-5632},
% %     },
% % %     review={\MR{1208153 (94e:14024)}},
% % %     doi={10.1070/IM1993v041n01ABEH002182},
% % }
% 
% % \bib{preygel}{article}{
% %     author = {Preygel, Anatoly},
% %     title = {Thom-Sebastiani and duality for matrix factorizations},
% %  eprint = {http://arxiv.org/abs/1101:5834},
% %  journal = {ArXiv e-prints},
% %      year = {2011},
% % }
% % 
\bib{quillen}{article}{
    author={Quillen, Daniel},
    title={Higher algebraic $K$-theory. I},
    conference={
        title={Algebraic $K$-theory, I: Higher $K$-theories (Proc. Conf.,
        Battelle Memorial Inst., Seattle, Wash., 1972)},
    },
    book={
        publisher={Springer},
        place={Berlin},
    },
    date={1973},
    pages={85--147. Lecture Notes in Math., Vol.  341},
%     review={\MR{0338129 (49 \#2895)}},
}
% 
% \bib{ravenel-localization}{article}{
%     author={Ravenel, Douglas C.},
%     title={Localization with respect to certain periodic homology theories},
%     journal={Amer. J. Math.},
%     volume={106},
%     date={1984},
%     number={2},
%     pages={351--414},
%     issn={0002-9327},
% %     review={\MR{737778 (85k:55009)}},
% %     doi={10.2307/2374308},
% }

% \bib{ravenel-nilpotence}{book}{
%     author={Ravenel, Douglas C.},
%     title={Nilpotence and periodicity in stable homotopy theory},
%     series={Annals of Mathematics Studies},
%     volume={128},
%     note={Appendix C by Jeff Smith},
%     publisher={Princeton University Press, Princeton, NJ},
%     date={1992},
%     pages={xiv+209},
%     isbn={0-691-02572-X},
% %     review={\MR{1192553 (94b:55015)}},
% }

% \bib{rezk-notes}{article}{
%     author={Rezk, Charles},
%     title={Notes on the Hopkins-Miller theorem},
%     conference={
%     title={Homotopy theory via algebraic geometry and group
%     representations },
%     address={Evanston, IL},
%     date={1997},
%     },
%     book={
%     series={Contemp. Math.},
%     volume={220},
%     publisher={Amer.
%     Math. Soc.,
%     Providence, RI},
%     },
%     date={1998},
%     pages={313--366},
% %     review={\MR{1642902
% %     (2000i:55023)}},
% %     doi={10.1090/conm/220/03107},
% }

\bib{rognes-2primary}{article}{
    author={Rognes, John},
    title={Two-primary algebraic $K$-theory of pointed spaces},
    journal={Topology},
    volume={41},
    date={2002},
    number={5},
    pages={873--926},
    issn={0040-9383},
%     review={\MR{1923990 (2003m:19002)}},
%     doi={10.1016/S0040-9383(01)00005-2},
}

\bib{rognes-whitehead}{article}{
    author={Rognes, John},
    title={The smooth Whitehead spectrum of a point at odd regular primes},
    journal={Geom. Topol.},
    volume={7},
    date={2003},
    pages={155--184},
    issn={1465-3060},
%     review={\MR{1988283 (2004f:19004)}},
%     doi={10.2140/gt.2003.7.155},
}

\bib{schlichtkrull-units}{article}{
   author={Schlichtkrull, Christian},
   title={Units of ring spectra and their traces in algebraic $K$-theory},
   journal={Geom. Topol.},
   volume={8},
   date={2004},
   pages={645--673},
   issn={1465-3060},
%    review={\MR{2057776 (2005m:19003)}},
%    doi={10.2140/gt.2004.8.645},
}

\bib{schwede-shipley}{article}{
    author={Schwede, Stefan},
    author={Shipley, Brooke},
    title={Stable model categories are categories of modules},
    journal={Topology},
    volume={42},
    date={2003},
    number={1},
    pages={103--153},
    issn={0040-9383},
    % review={\MR{1928647 (2003g:55034)}},
    % doi={10.1016/S0040-9383(02)00006-X},
}

\bib{shipley-hochschild}{article}{
    author={Shipley, Brooke},
    title={Symmetric spectra and topological Hochschild homology},
    journal={$K$-Theory},
    volume={19},
    date={2000},
    number={2},
    pages={155--183},
    issn={0920-3036},
%     review={\MR{1740756 (2001h:55010)}},
%     doi={10.1023/A:1007892801533},
}
\bib{shipley-hz}{article}{
    author={Shipley, Brooke},
    title={$H\mathbb{Z}$-algebra spectra are differential graded algebras},
    journal={Amer. J. Math.},
    volume={129},
    date={2007},
    number={2},
    pages={351--379},
    issn={0002-9327},
%     review={\MR{2306038 (2008b:55015)}},
%     doi={10.1353/ajm.2007.0014},
}
% % 
% % \bib{simpson}{article}{
% %     author = {Simpson, Carlos},
% %     title = {Algebraic (geometric) {$n$}-stacks},
% %  eprint = {http://arxiv.org/absalg-geom/9609014},
% %  journal = {ArXiv e-prints},
% %      year = {1996},
% % }
% 
% % \bib{sosna-scalar}{article}{
% %     author = {Sosna, Pawel},
% %     title = {Scalar extensions of triangulated categories},
% %  eprint = {http://arxiv.org/abs/1109.6515},
% %  journal = {ArXiv e-prints},
% %      year = {2011},
% % }
% 
% 
% \bib{stevenson-support}{article}{
%     author = {Stevenson, Greg},
%     title  =  {Support theory via actions of tensor triangulated categories},
%     journal = {J. Reine Agnew. Math.},
%     volume = {681},
%     year = {2013},
%     pages = {219--254},
% }
% 
% \bib{stevenson-singularity}{article}{
%     author = {Stevenson, Greg},
%     title  =  {Subcategories of singularity categories via tensor actions},
%     journal = {to appear in Compos. Math.},
%     eprint = {http://arxiv.org/abs/1105.4698},
% }
% 
% \bib{stevenson-flat}{article}{
%     author = {Stevenson, Greg},
%     title  =  {Derived categories of absolutely flat rings},
%     journal = {ArXiv e-prints},
%     eprint = {http://arxiv.org/abs/1210.0399},
%     year = {2012},
% }
\bib{strickland}{article}{
    author={Strickland, N. P.},
    title={Products on ${\rm MU}$-modules},
    journal={Trans. Amer. Math. Soc.},
    volume={351},
    date={1999},
    number={7},
    pages={2569--2606},
    issn={0002-9947},
%     review={\MR{1641115 (2000b:55003)}},
%     doi={10.1090/S0002-9947-99-02436-8},
}
% 
% % \bib{swan}{article}{
% %     author={Swan, Richard G.},
% %     title={Hochschild cohomology of quasiprojective schemes},
% %     journal={J. Pure Appl. Algebra},
% %     volume={110},
% %     date={1996},
% %     number={1},
% %     pages={57--80},
% %     issn={0022-4049},
% % %     review={\MR{1390671 (97j:19003)}},
% % %     doi={10.1016/0022-4049(95)00091-7},
% % }
%  
% % \bib{szymik}{article}{
% %     author = {{Szymik}, M.},
% %     title  =  {Brauer  spaces  for  commutative  rings   and   structured   ring   spectra},
% %     journal = {ArXiv e-prints},
% %     eprint = {http://arxiv.org/abs/1110.2956},
% %     year = {2011},
% % }
%  
% \bib{thomason-triangulated}{article}{
%     author={Thomason, R. W.},
%     title={The classification of triangulated subcategories},
%     journal={Compos. Math.},
%     volume={105},
%     date={1997},
%     number={1},
%     pages={1--27},
%     issn={0010-437X},
% %     review={\MR{1436741 (98b:18017)}},
% %     doi={10.1023/A:1017932514274},
% }
%  
\bib{thomason-trobaugh}{article}{
    author={Thomason, R. W.},
    author={Trobaugh, Thomas},
    title={Higher  algebraic   $K$-theory   of   schemes   and   of   derived   categories},
    conference={
    title={The Grothendieck Festschrift, Vol.\ III},
    },
    book={
        series={Progr. Math.},
        volume={88},
        publisher={Birkh\"auser Boston},
        place={Boston, MA},
    },
    date={1990},
    pages={247--435},
    % review={\MR{1106918 (92f:19001)}},
    % doi={10.1007/978-0-8176-4576-2_10},
}

\bib{waldhausen-2}{article}{
    author={Waldhausen, Friedhelm},
    title={Algebraic $K$-theory of topological spaces. II},
    conference={
    title={Algebraic topology, Aarhus 1978 (Proc. Sympos., Univ. Aarhus,
    Aarhus, 1978)},
    },
    book={
    series={Lecture Notes in Math.},
    volume={763},
    publisher={Springer, Berlin},
    },
    date={1979},
    pages={356--394},
%     review={\MR{561230 (81i:18014b)}},
}

% \bib{waldhausen-chromatic}{article}{
%     author={Waldhausen, Friedhelm},
%     title={Algebraic $K$-theory of spaces, localization, and the chromatic
%     filtration of stable homotopy},
%     conference={
%     title={Algebraic topology, Aarhus 1982},
%     address={Aarhus},
%     date={1982},
%     },
%     book={
%     series={Lecture Notes in Math.},
%     volume={1051},
%     publisher={Springer,
%     Berlin},
%     },
%     date={1984},
%     pages={173--195},
% %     review={\MR{764579 (86c:57016)}},
% %     doi={10.1007/BFb0075567},
% }

% \bib{waldhausen}{article}{
%     author={Waldhausen, Friedhelm},
%     title={Algebraic $K$-theory of spaces},
%     conference={
%     title={Algebraic and geometric topology},
%     address={New Brunswick, N.J.},
%     date={1983},
%     },
%     book={
%     series={Lecture Notes in Math.},
%     volume={1126},
%     publisher={Springer, Berlin},
%     },
%     date={1985},
%     pages={318--419},
% %     review={\MR{802796 (86m:18011)}},
% %     doi={10.1007/BFb0074449},
% }

% % \bib{wang}{article}{
% %     author={Wang, Shianghaw},
% %     title={On the commutator group of a simple algebra},
% %     journal={Amer. J. Math.},
% %     volume={72},
% %     date={1950},
% %     pages={323--334},
% %     issn={0002-9327},
% % %     review={\MR{0034380 (11,577d)}},
% % }

\bib{weibel-homological}{book}{
    author={Weibel, Charles A.},
    title={An introduction to homological algebra},
    series={Cambridge Studies in Advanced Mathematics},
    volume={38},
    publisher={Cambridge University Press, Cambridge},
    date={1994},
    pages={xiv+450},
    isbn={0-521-43500-5},
    isbn={0-521-55987-1},
%     review={\MR{1269324 (95f:18001)}},
%     doi={10.1017/CBO9781139644136},
}

% \bib{weibel-kchapter}{article}{
%     author={Weibel, Charles},
%     title={Algebraic $K$-theory of rings of integers in local and global
%     fields},
%     conference={
%     title={Handbook of $K$-theory. Vol. 1, 2},
%     },
%     book={
%     publisher={Springer, Berlin},
%     },
%     date={2005},
%     pages={139--190},
% %     review={\MR{2181823 (2006g:11232)}},
% %     doi={10.1007/3-540-27855-9_5},
% }
\bib{weibel}{book}{
    author={Weibel, Charles A.},
    title={The $K$-book},
    series={Graduate Studies in Mathematics},
    volume={145},
    publisher={American Mathematical Society, Providence, RI},
    date={2013},
    pages={xii+618},
%     isbn={978-0-8218-9132-2},
%     review={\MR{3076731}},
}

\end{biblist}
\end{bibdiv}

\vspace{20pt}
\noindent
\textsc{University of Illinois at Chicago, USA}\\
\emph{E-mail address:} \texttt{antieau@math.uic.edu}

\vspace{10pt}
\noindent
\textsc{University of Copenhagen, Denmark}\\
\emph{E-mail address:} \texttt{tbarthel@math.ku.dk}

\vspace{10pt}
\noindent
\textsc{Purdue University, USA}\\
\emph{E-mail address:} \texttt{dgepner@purdue.edu}

\end{document}